\documentclass[12 pt]{amsart}

\usepackage{layout}
\usepackage{amsmath, amsthm}
\usepackage{amssymb, amsrefs}
\usepackage{stmaryrd}
\usepackage{graphicx}
\usepackage{setspace}
\usepackage{textcomp}
\usepackage{esvect}
\usepackage[letterpaper]{geometry}
\usepackage{arydshln}
\usepackage[all]{xy}
\usepackage{etex}
\usepackage{ifthen}

\usepackage[dvipsnames]{xcolor}

\usepackage{epsfig}
\usepackage{latexsym}
\usepackage{amsthm}

\usepackage{mathrsfs}

\usepackage[colorlinks,citecolor=blue]{hyperref}

\usepackage[latin1]{inputenc}

\usepackage{tikz-cd}
\usetikzlibrary{graphs,graphs.standard}
\usepackage{pgfplots}
\usetikzlibrary{matrix,arrows,decorations.pathmorphing}
\usetikzlibrary{shapes.geometric}

\newtheorem{theorem}{Theorem}[section]
\newtheorem{lemma}[theorem]{Lemma}
\newtheorem{proposition}[theorem]{Proposition}

\newtheorem{corollary}[theorem]{Corollary}

\theoremstyle{definition}
\newtheorem{definition}[theorem]{Definition}

\newtheorem{example}[theorem]{Example}

\newtheorem{obs}[theorem]{Observation}

\theoremstyle{definition}
\numberwithin{equation}{section}

\tikzset{
    ncbar angle/.initial=90,
    ncbar/.style={
        to path=(\tikztostart)
        -- ($(\tikztostart)!#1!\pgfkeysvalueof{/tikz/ncbar angle}:(\tikztotarget)$)
        -- ($(\tikztotarget)!($(\tikztostart)!#1!\pgfkeysvalueof{/tikz/ncbar angle}:(\tikztotarget)$)!\pgfkeysvalueof{/tikz/ncbar angle}:(\tikztostart)$)
        -- (\tikztotarget)
    },
    ncbar/.default=0.5cm,
}

\tikzset{square left brace/.style={ncbar=0.5cm}}
\tikzset{square right brace/.style={ncbar=-0.5cm}}

\tikzset{round left paren/.style={ncbar=0.5cm,out=120,in=-120}}
\tikzset{round right paren/.style={ncbar=0.5cm,out=60,in=-60}}

\DeclareMathOperator{\id}{id}

\def \Gph {\mathsf{Gph}}

\def \con {\sim}
\definecolor{laura}{rgb}{.4, 0, .6}

\begin{document}
\pagestyle{headings}

 \title{Cops and Robbers: A $\times$-homotopy invariant variant}
    
    \author{T. Chih and L. Scull}

\vspace*{-2em}

\begin{abstract}
Cops and Robbers is a pursuit-evasion game played on graphs, of which many variants have been developed and studied.  We introduce a variant of this game, "Sneaky-Active Cops and Robbers", where all cops and robber must move on their turn, and where the robber is allowed to move onto a cop position without being captured.  We show that for reflexive graphs, this game is equivalent to the classical cops and robbers and  that the cop number for a graph is invariant under $\times$-homotopy equivalence.  We then develop further properties of this game, computing cop numbers for a number of graph families and developing results about the behavior of categorical and box products of graphs.  

\end{abstract}

\maketitle
\setcounter{tocdepth}{1}
\tableofcontents
\section{Introduction}

The game of cops and robbers is a pursuit-evasion game played on discrete graphs, introduced independently by Quillot \cite{Quillot} and Nowakowski and Winkler \cite{Nowakowski}.  The game has two players, one controlling a group of cops and the other controlling a robber.   The players take turns moving the pieces under their control, either moving to an adjacent vertex or staying in the same place.    The cops win by having a cop occupy the same vertex as the robber, and the robber wins by indefinitely evading the cops.  Numerous variants have been developed and studied.  In particular, Aigner and Fromme \cite{CROriginal} and  Neufeld and Nowakowski \cite{CRProduct} considered an ``active" variant where each player must move at least one piece to an adjacent vertex on each turn, and   Gromovikov, Kinnersley and Simone \cite{FullyActive} considered a ``fully active" modification where all pieces must move on every turn.

Independent from this, the development of discrete homotopy theories defined on graphs have been an ongoing area of inquiry.    There are two prominent theories of homotopies for graphs in the literature:     $A$-homotopy \cites{Babson1, Barcelo1, BoxHomotopy,  hardeman2019lifting} and $\times$-homotopy \cites{CS1, CS2, CS3, Docht1, Docht2, CompGH, Kozlov1, Kozlov1, Kozlov3, KozlovShort}.  We will focus on   $\times$-homotopy, in which  two finite graphs $X, Y$ are homotopy equivalent when we may transform one to another via a sequence of  ``fold" moves.  
For reflexive graphs, this folding process is equivalent to the  dismantling operation studied in in the Cops and Robber literature \cite{BonatoCaR}.  It is known that dismantling preserves the cop number of a graph.   Inspired by this, we construct a variant of Cops and Robbers which  is $\times$-homotopy invariant for all graphs.  The active and fully active variants are not  $\times$-homotopy invariant, so we  define a related  ``sneaky-active" version of the game, where all pieces must move on their respective turns, and the robber is allowed to move onto a cop position without being captured, so that capture only occurs when the cop moves onto the robber.   The goal of this paper is to study this new sneaky-active variant of Cops and Robbers.  In particular, we will show that this variant generalizes the classic game and agrees with it on reflexive graphs (Proposition \ref{P:reflexiveclassic}), and is  $\times$-homotopy equivalent  for any graph (Theorem \ref{T:invariant}).

The paper is organized as follows.  In  Section \ref{S:background}, we introduce the requisite background for both$\times$-homotopy and  Cops and Robbers.  In Section \ref{S:basics}, we define  the ``Sneaky-Active Cops and Robbers" game variant,  show that this generalizes the classic game and is $\times$-homtopy invariant, and give some comparisons to previously studied variants.  In Section \ref{S:families}, we compute the sneaky-active cop numbers for a variety of graph families.  In Section \ref{S:CP}, we show that the sneaky-active cop number for the categorical product of graphs is determined completely by the cop number of the factor graphs.  Finally in Section \ref{S:BP}, we find   bounds for the sneaky-active cop number of  box products of graphs, both generally and in some specific graph families.

\section{Background} \label{S:background}

In this section, we summarize background material regarding $\times$-homotopy and Cops and Robbers.  More details on $\times$-homotopy be found in \cite{CS1}, and more on Cops and Robbers may be found in \cites{BonatoCaR, FullyActive}.      We work in the category $\Gph$ of  undirected graphs, without multiple edges and where loops are allowed.  Moreover, throughout this paper, we will assume that graphs are  finite and contain no isolated vertices.  Graph theory terminology and notation follows \cite{Bondy} and category theory terminology and notation follows \cite{riehlCTIC}.

\subsection{Background on $\times$-Homomotopy}
\begin{definition} \cite{HN2004} The category of finite graphs $\Gph$ is defined by: \begin{itemize} \item An object is a  graph $X$,  consisting of a finite set of vertices $V(X)$ and a set $E(X)$ of edges connecting them.  Each edge is given by an unordered pair of vertices.     Any pair of vertices has at most one edge connecting them, and loops are allowed but isolated vertices are not:  each vertex must be connected to at least one other (possibly itself).     A connecting  edge will be notated by   $v_1 \con v_2$. 
\item 
A morphism in the category   $\Gph$ is a graph homomorphism $f:  X \to Y$, given by a set map $f:  V(X) \to V(Y)$ such that for $v_1, v_2 \in V(X)$, if $v_1 \con v_2 \in E(X)$ then $f(v_1) \con f(v_2) \in E(Y)$.  \end{itemize} \end{definition}  

Throughout this paper, we will assume that  `graph' always refers to an object in $\Gph$.   

To define $\times$-homotopy, we use the looped path graph. 

\begin{definition}\label{D:path} \cites{Bondy, Docht1}   Let $I_n^{\ell}$ be the looped path graph with  $n+1$ vertices   $\{ 0, 1, \dots, n\}$ such that $i \con i$ and  $i \con {i+1}$ for $i=0, \ldots, n-1$.  

$$ \begin{tikzpicture}
\node at (-.5,0){$I_n^{\ell} = $};
\draw[fill] (0,0) circle (2pt);
\draw (0,0) --node[below]{0} (0,0);
\draw (0,0)  to[in=50,out=140,loop, distance=.7cm] (0,0);

\draw[fill] (1,0) circle (2pt);
\draw (1,0) --node[below]{1} (1,0);
\draw (1,0)  to[in=50,out=140,loop, distance=.7cm] (1,0);

\draw[fill] (2,0) circle (2pt);
\draw (2,0) --node[below]{2} (2,0);
\draw (2,0)  to[in=50,out=140,loop, distance=.7cm] (2,0);

\node at (3,0){$\cdots$}  ;

\draw[fill] (4,0) circle (2pt);
\draw (4,0) --node[below]{$n$} (4,0);
\draw (4,0)  to[in=50,out=140,loop, distance=.7cm] (4,0);

\draw(0,0) -- (1,0);
\draw(1,0) -- (2,0);
\draw(2,0) -- (2.7,0);
\draw(3.3,0) -- (4,0);

\end{tikzpicture}$$
\end{definition} 

Homotopies are defined using the product graph $X \times I_n^\ell$.

\begin{definition}\label{D:prod} \cite{HN2004} For graphs $X$ and $Y$, the (categorical) {\bf product graph} $X \times Y$ is defined by: \begin{itemize}
\item  A vertex is a  pair $(v, w)$ where $v \in V(X)$ and $w \in V(Y)$.
\item An edge is defined by $(v_1, w_1) \con (v_2, w_2) \in E(X \times Y) $ whenever $v_1 \con v_2 \in E(X)$ and $w_1 \con w_2 \in E(Y)$.   \end{itemize}
\end{definition}

\begin{definition}\cite{Docht1} \label{D:htpy} Given $f, g:  X \to Y$, we say that $f$ is {\bf $\times$-homotopic} to $g$, written $f \simeq g$,  if there is a map $\Gamma: X \times I_n^{\ell}  \to Y$ such that $\Gamma | _{X \times \{ 0\} } = f$ and $\Gamma | _{X \times \{ n\} } = g$.         \end{definition}
\begin{definition}\label{D:homotopyequivalence}
    Let $f:X\to Y$ and $g:Y\to X$.  If $gf\simeq \id_X$ and $fg\simeq \id_Y$, we say that $f, g$ are $\times$- homotopy equivalences and that $X, Y$ are $\times$-homotopy equivalent,  denoted $X\simeq Y$.
\end{definition}
There are several definitions of homotopy for graphs in the literature.   In this work, we focus exclusively on $\times$-homotopy for this paper and thus `homotopy' will always refer to $\times$-homotopy.

For finite graphs, we have a concrete interpretation of homotopy equivalence via fold maps.

\begin{definition}\label{D:fold}
   Let  $X$ be a graph with $x,x'\in V(X)$ such that $N(x)\subseteq N(x')$.  Define $X'=X\backslash\{x\}$.   The map $\rho:X\to X'$ such that $\rho(x)=x'$ and $\rho(y)=y$ for all $y\in V(X), y\neq x$ is a  \textbf{fold} map, and the inclusion 
    $X'\to X$ is an \textbf{unfold} map.
\end{definition}

\begin{obs}[\cite{CS1}]\label{O:fold}  Any fold map is a homotopy equivalence.  Moreover, 
  for finite graphs $X, Y$,  $\Phi: X\to Y$ is a homotopy equivalence if and only if it can be written as a composition $\varphi_1\varphi_2\cdots\varphi_n = \Phi$ where each  $\varphi_i$ is a fold or unfold. 
\end{obs}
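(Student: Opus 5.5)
The first claim I would prove directly from Definition \ref{D:htpy}. Let $\rho:X\to X'$ be the fold sending $x\mapsto x'$, and let $i:X'\to X$ be the unfold (inclusion). First I check that $\rho$ is a homomorphism:
\begin{itemize}
\item an edge $x\con b$ with $b\neq x$ goes to $x'\con b$, which holds because $b\in N(x)\subseteq N(x')$;
\item a loop $x\con x$ gives $x\in N(x)\subseteq N(x')$, so $x'\con x$, hence $x'\in N(x)\subseteq N(x')$, i.e.\ $x'\con x'$.
\end{itemize}
Clearly $\rho i=\id_{X'}$. For the other composite I define $\Gamma:X\times I_1^{\ell}\to X$ by $\Gamma(y,0)=y$ and $\Gamma(y,1)=i\rho(y)$. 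Take an edge $(a,s)\con(b,t)$, so $a\con b$. If neither of $a,b$ equals $x$, the image edge is $a\con b$. If exactly one of them, say $a$, equals $x$, the image edge is either $x\con b$ or $x'\con b$; the latter holds since $b\in N(x)\subseteq N(x')$. If $a=b=x$, then $x$ is looped, and the required edges $x\con x'$ and $x'\con x'$ were verified above. So $i\rho\simeq\id_X$, and both folds and unfolds are homotopy equivalences.

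For the ``if'' half of the second claim, I need that a composite of homotopy equivalences is a homotopy equivalence. For this I would show that $\times$-homotopy is compatible with pre- and post-composition: if $\Gamma$ is a homotopy from $f$ to $g$, then $h\circ\Gamma$ and $\Gamma\circ(k\times\id)$ are homotopies from $hf$ to $hg$ and from $fk$ to $gk$. I would also show it is transitive, by concatenating homotopies along $I_n^{\ell}\cup I_m^{\ell}=I_{n+m}^{\ell}$. With these, the standard argument applies.

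For the ``only if'' half I would follow the stiff-graph approach of \cite{Docht1,CS1}. Call a graph stiff if no vertex $v$ has $N(v)\subseteq N(w)$ for some $w\neq v$. The plan is:
\begin{enumerate}
\item Since graphs are finite, repeatedly folding $X$ and $Y$ terminates at stiff subgraphs $X_0\subseteq X$ and $Y_0\subseteq Y$. The corresponding composites of folds and unfolds give retractions $r_X:X\to X_0$, $r_Y:Y\to Y_0$ and inclusions $j_X$, $j_Y$.
\item A homotopy is a sequence of one-step homotopies. A one-step homotopy from $\id$ to $f$ on a stiff graph forces $N(v)\subseteq N(f(v))$ for every $v$, and stiffness then forces $f=\id$. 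So any self-map of a stiff graph homotopic to the identity is the identity.
\item Consequently $r_Y\Phi j_X:X_0\to Y_0$ is a homotopy equivalence between stiff graphs whose composites with an inverse are identities, so it is an isomorphism.
\item An isomorphism $\sigma$ can itself be realized by folds and unfolds: unfold a twin copy of each vertex of the target, then fold the originals away.
\end{enumerate}
This expresses a map homotopic to $\Phi$ as the composite $j_Y\sigma r_X$ of folds and unfolds.

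The main obstacle is passing from ``homotopic to $\Phi$'' to ``equal to $\Phi$''. I would handle it by showing that any one-step homotopy $f\simeq g$ can itself be factored through folds. The key point is that a one-step homotopy gives $f(a)\con g(b)$ for all $a\con b$, so $\Phi$ can be absorbed into the unfolded twin copies in step 4. Failing that, I would interpret the statement up to homotopy, which is how \cite{CS1} uses it.
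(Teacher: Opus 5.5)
Your proof that folds and unfolds are homotopy equivalences is correct, and so is the `if' direction. Steps 1--4 also correctly show that every homotopy equivalence $\Phi$ is \emph{homotopic} to the fold/unfold composite $j_Y\sigma r_X$.

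The gap is the last step. The statement asserts that $\Phi$ \emph{equals} such a composite, and nothing in your argument gets from $\Phi\simeq j_Y\sigma r_X$ to equality. Your proposed fix is not an argument, and the natural way to make ``absorb $\Phi$ into twin copies'' precise is circular. Suppose $H\colon X\times I_1^{\ell}\to Y$ is a one-step homotopy from $f$ to $g$. Then $(x,0)$ and $(x,1)$ are twins in $X\times I_1^{\ell}$, so the inclusions $i_0,i_1$ of $X$ are composites of unfolds, and $f=Hi_0$, $g=Hi_1$. But to deduce that $f$ is a fold/unfold composite from the fact that $g$ is one, you need $H$ itself to be such a composite. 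That is the original problem again, on a larger graph. What is missing is a genuine lemma that fold/unfold composites are closed under homotopy among homotopy equivalences, or some other direct construction.

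One construction does work when the homotopies are one step. Suppose $\Psi\colon Y\to X$ satisfies $a\con b\Rightarrow a\con\Psi\Phi(b)$ in $X$ and $a\con b\Rightarrow\Phi\Psi(a)\con b$ in $Y$. Form the graph from $X\sqcup Y$ by adding an edge $x\con y$ whenever $\Phi(x)\con y$ and $x\con\Psi(y)$. This graph folds onto $Y$ via $x\mapsto\Phi(x)$ and onto $X$ via $y\mapsto\Psi(y)$, which exhibits $\Phi$ literally as unfolds followed by folds. You would still have to reduce general multi-step homotopies to this case.

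If you do not want to do that, state explicitly that you are proving the weaker form, rather than hedging. The weaker form says $\Phi$ is homotopic to a fold/unfold composite; equivalently, $X\simeq Y$ iff one can pass from $X$ to $Y$ by folds and unfolds. The paper has no proof of its own to compare with; it simply cites \cite{CS1}. This weaker, graph-level form is all the paper uses: in Theorem \ref{T:invariant} the observation only serves to reduce invariance of $c_{SA}$ to invariance under a single fold.
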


\subsection{Background on Cops and Robbers}
Next we recall the game of Cops and Robbers, originally defined in \cites{Quillot, Nowakowski} and thoroughly discussed in \cite{BonatoCaR}.

\begin{definition}\label{D:CR}
    \textbf{Cops and Robbers} is a game played on a finite graph with two players, the \textbf{cop} player and the \textbf{robber} player.  The cop player begins by placing a number of cops $c_1, \ldots c_m$ on the vertices of the graph, after which the robber players places a robber $R$ on a vertex.   The two players then take turns, with the cop player going first.  On the cop turn, each cop may either remain in place or move to an adjacent vertex.  The robber on their turn has the same choices.  

    If after any move any cop is on the same vertex as the robber, we say that the cop player has \textbf{captured} the robber.   The objective of the cop player is to \textbf{capture} the robber player, and the goal of the robber player is to \textbf{evade} capture indefinitely. 
    
    If the cop player with $k$ cops has a strategy which  always results in a capture of the robber no matter how the robber moves, we say that the cop player has  a \textbf{winning strategy} for $k$ cops.   Given a graph $X$, we define the \textbf{cop number} of $X$ (denoted $c(X)$) to be the minimum number of cops needed for the cops to have a winning strategy on $X$.
\end{definition}

Numerous papers have been written on this game, and many variants defined.  One variant of particular interest for this work is the Fully Active Cops and Robbers of \cite{FullyActive}.

\begin{definition}\label{D:FACR}
    \textbf{Fully Active Cops and Robbers} is a game  with similar rules to the original,  except that on each player's turn, all pieces under the player's control \textit{must} move to an adjacent vertex.  We define the \textbf{fully active cop number} of $X$ (denoted $c_A(X)$) to be the minimum number of cops needed for the cops to have a winning strategy on $X$ with this variant.
\end{definition}

In the classic Cops and Robbers, there is no difference between a looped and unlooped vertex in terms of possible moves.  However, in the fully active variant, a piece on an unlooped vertex must move to a different adjacent vertex, whereas a piece on a looped vertex may traverse the loop and stay on the same vertex, as the vertex is adjacent to itself via the loop.

\section{The basics of Sneaky-Active Cops and Robbers}\label{S:basics}

We begin by introducing our variant of the classic Cops and Robbers game:  \textbf{Sneaky-Active Cops and Robbers}.

\begin{definition}\label{D:SACR}
    \textbf{Sneaky-Active Cops and Robbers}
    is a game played on a finite graph, with two players, the \textbf{cop} player and the \textbf{robber} player.  The cop player begins by placing  cops $c_1, \ldots c_m$ on the vertices of the graph, after which the robber players places a robber $R$ on a vertex.   Then in each turn, starting with the cop player, each player moves the pieces under their control to an adjacent vertex.  The objective of the cop player is to \textbf{capture} the robber player by moving onto the vertex occupied by the robber, and the goal of the robber player is to \textbf{evade} capture indefinitely.
    
    Specifically, the game differs from the classic game in the following way:
    \begin{itemize}
        \item 
   Each piece \textit{must} move to an adjacent vertex at each turn, so a cop or robber may only retain its position if the vertex it occupies has a loop, making it adjacent to itself.   

 \item      A cop captures the robber ONLY if during the cop turn, a cop moves onto the vertex occupied by the robber.   If the robber moves onto a vertex occupied by a cop, this is NOT a capture.   \end{itemize}
\end{definition}

Because every piece must move at every turn, as in the fully active variant of \cite{FullyActive}, we call this variant active.   And 
since the robber may move onto a vertex occupied by a cop without being captured, we imagine that the robber is able to `sneak' by the cops, and call this variant sneaky.

 Given a graph $X$, we let $c_{SA}(X)$ denote the minimum number of cops needed for the cops to have a winning strategy in Sneaky-Active Cops and Robbers.  Thus there is a strategy that allows the cop player to always capture the robber with $k$ cops, but if there are only $k-1$ cops the robber can evade capture indefinitely.

\begin{example}\label{E:SAgame}
    Consider $P_5$:
    \[
        \begin{tikzpicture}
            \draw (1,0) -- (5,0);
            \foreach \x in {1,...,5}{
            \draw[fill, ] (\x, 0)node[below]{\tiny $\x$}  circle (2pt);            };
        \end{tikzpicture}
    \]
    Suppose there were one cop $c$.  Wherever the cop player places $c$, the robber player places the robber $R$ on the same vertex.  When the cop moves, the robber follows them and moves onto the same vertex.  Thus the robber has a strategy to evade capture indefinitely.

    However, suppose there were two cops $c_1, c_2$ placed on vertices  $1,2$ respectively.  If the robber starts on 1 or 3, $c_2$ may capture them on the cop turn.  If the robber starts on $2$, $c_1$ may capture them on the cop turn.  If the robber starts on $4$, $c_1$ has odd distance from the robber, and by moving  to the right on each turn, they will eventually capture the robber.  Similarly if the robber starts on $5$, $c_2$ may capture the robber by moving right each turn. Thus $c_{SA}(P_5)=2$.

\end{example}

The discussion in Example \ref{E:SAgame} leads to the following observation.

\begin{obs}\label{O:cn1} 
    If $X$ has no loops, then $c_{SA}(X)\geq 2$,  since if there is only one cop the robber can evade capture  indefinitely by starting and remaining on the same vertex as the cop as in Example \ref{E:SAgame}.
\end{obs}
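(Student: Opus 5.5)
The plan is to exhibit an explicit evasion strategy for the robber against a single cop, using only the two rules that distinguish the sneaky-active game. The idea is that the robber \emph{shadows} the cop: it always stands on the same vertex as the cop at the end of its own turn. This is the strategy already sketched in Example \ref{E:SAgame}, so the task is to check that it is always legal and never loses, for an arbitrary finite graph $X$ with no loops.

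\textbf{Placement.} The cop player places the single cop $c$ first, say at a vertex $v_0$. The robber then places $R$ on $v_0$ as well. Placement is not a move, so no capture occurs at this point; by Definition \ref{D:SACR}, capture happens only when a cop \emph{moves} onto the robber's vertex during the cop turn.

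\textbf{Invariant.} I will prove by induction on the number of rounds that at the start of every cop turn the cop and robber occupy the same vertex $v$, and that the robber has not been captured. Suppose this holds at some cop turn. The cop must move to a vertex $w$ adjacent to $v$. Since $X$ has no loops, $v$ is not adjacent to itself, so $w\neq v$. The robber is still at $v$, so the cop has not moved onto the robber's vertex, and no capture occurs.

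\textbf{Robber's reply.} On the robber's turn, $R$ sits at $v$ and must move to an adjacent vertex. Because $w\sim v$ is an edge, the move $v\to w$ is legal, so the robber moves onto the cop's vertex $w$. By the sneaky rule this is not a capture. This restores the invariant at the next cop turn, now at $w$.

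The only point needing care is this: the cop's forced move can never keep it at $v$, because $X$ is loopless, while the robber's matching move is always available, because $X$ is undirected. These two facts are exactly what is used above; the standing assumption of no isolated vertices merely guarantees that moves exist at all. By induction the robber evades capture indefinitely. Hence one cop has no winning strategy, and $c_{SA}(X)\geq 2$.
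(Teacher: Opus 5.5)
Your proposal is correct and matches the paper's argument: the robber places on the cop's vertex and then follows the cop each turn. This is safe because looplessness forces the cop off the robber's vertex, and the sneaky rule makes the robber's move onto the cop harmless; your induction just spells out the invariant that the paper leaves implicit in Example \ref{E:SAgame}.
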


The result that motivated our interest in this version of the game is that the cop number $c_{SA}(X)$ is a $\times$-homotopy invariant.

\begin{theorem}\label{T:invariant}
    If  $X, Y$ are $\times$-homotopy equivalent,then  $c_{SA}(X) = c_{SA}(Y)$.
\end{theorem}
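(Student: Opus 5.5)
By Observation \ref{O:fold}, any homotopy equivalence between finite graphs is a composition of folds and unfolds. It therefore suffices to treat a single fold. So let $x,x'\in V(X)$ with $N(x)\subseteq N(x')$, let $X'=X\setminus\{x\}$, and let $\rho:X\to X'$ be the fold with $\rho(x)=x'$. The plan is to prove $c_{SA}(X)\le c_{SA}(X')$ and $c_{SA}(X')\le c_{SA}(X)$ separately, in each case transferring a winning cop strategy through one of the maps $\rho$ or the inclusion $\iota:X'\to X$. Both maps are graph homomorphisms, so a legal move (a traversal of an edge, possibly a loop) in one graph maps to a legal move in the other. This is exactly why the ``active'' rule fits homotopy: moves are edges, and edges are preserved.

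\textbf{Step 1: $c_{SA}(X)\le c_{SA}(X')$.} Fix a winning strategy for $k$ cops on $X'$. The cops play on $X$, which contains $X'$, and consult the $X'$-strategy using the \emph{shadow} robber $\rho(R)$. Each robber move $r\to r'$ in $X$ is an edge, so $\rho(r)\sim\rho(r')$ is a legal shadow move in $X'$. The cops' moves, made inside $X'\subseteq X$, are legal in $X$. After finitely many turns a cop moves onto $\rho(R)$ in $X'$.

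If $R\ne x$, this cop has captured $R$. If $R=x$, the cop has instead moved onto $x'$ from some vertex $u$ with $u\sim x'$. We need it to have been able to move onto $x$. This requires $u\sim x$, and $N(x)\subseteq N(x')$ goes the wrong direction. This is the main obstacle.

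To handle it, I would strengthen the transferred strategy. Since the shadow is captured, at the previous robber turn the robber stood at some $r$ with $r\sim x$ (it moved to $x$). Hence $r\in N(x)\subseteq N(x')$. I would try to modify the argument to use a capturing cop that can follow the robber's actual path. Alternatively, I would reformulate capture via homomorphisms: a cop wins on $X'$ iff a certain ``capture'' configuration is reached. I would then note that a cop sitting at a vertex $c$ that is adjacent to the robber's previous position $r$ can capture. The cleanest fix I would try first is to treat the robber on $x$ as equivalent to being on $x'$ and argue directly: when the cop reaches $x'$ while $R$ is at $x$, on the next cop turn the cop moves from $x'$ to wherever is needed. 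Because $N(x)\subseteq N(x')$, a cop on $x'$ can mirror any robber move out of $x$, so the cop can shadow the robber thereafter and capture on a parity/adjacency argument. If this fails, I would restrict the robber via a symmetric argument: the robber never benefits from $x$, since any move available from $x$ is available from $x'$.

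\textbf{Step 2: $c_{SA}(X')\le c_{SA}(X)$.} Take a winning strategy on $X$ and let the cops on $X'$ play $\rho$ of the prescribed positions. The robber in $X'$ is also a robber in $X$, so the cops can follow the $X$-strategy against it, with each cop's real position being $\rho$ of its virtual position. Virtual moves map under $\rho$ to legal moves, and a virtual capture at a vertex $R\in X'$ maps to a real capture, since $\rho(R)=R$. This direction is routine.

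Combining the two inequalities for each fold and unfold gives the theorem.
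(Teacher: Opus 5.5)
Your proposal is correct and follows the paper's proof. You reduce to a single fold, push a winning strategy on $X$ forward along $\rho$ to win on $X'$, and for the other inequality keep the cops inside $X'$ playing against the shadow $\rho(R)$. The obstacle you flag is resolved in the paper exactly by your first fix, and that fix closes the argument at once: when a cop lands on $x'$ with the robber on $x$, the robber must move to some $y\in N(x)\subseteq N(x')$, and the cop's mirroring move onto $y$ is already a capture, so no further shadowing, parity argument, or fallback is needed.
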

\begin{proof}
    By Observation \ref{O:fold}, graphs $X, Y$ are $\times$-homotopy equivalent if and only if one may be transformed into the other via a sequence of folds and unfolds.  Thus it suffices to show that $c_{SA}$ is invariant under a fold map.
    
    Let $X$ be a graph and $X'= X\backslash\{x\} =  \rho(X)$ where $\rho$ is a fold map which takes $x\mapsto x'$.    We first suppose that if $k$ cops can win on $X$ and show that $k$ cops may win on $X'$ as well.  For any starting vertex for the robber $R$ on $X'$, place a shadow robber on any vertex in the fiber $\varphi^{-1}(R)$ in $X$.   Then for any move the robber makes in $X'$, we can lift this to a move of the shadow robber in $X$ since $\varphi$ is surjective on neighborhoods (if there is a choice, pick one arbitrarily).  Then the cops in  $X$ have a winning strategy by assumption which they deploy, responding to the moves of shadow robber in $X$,   and the cops in $X'$  move to  $\varphi$ of the cop positions in $X$.    The shadow cops eventually capture the shadow robber on $X$, and when they do so, they cops capture the robber in $X'$ as well.
    
    Now suppose that $k$ cops can win on $X'$.  Then $k$ cops can win in $X$ according to the following strategy: the cops are placed and move only in the subgraph $X'$ of $X$, and follow their winning strategy in $X'$ if the robber is not on vertex $x$.   If the robber moves onto the vertex $x$, the cops proceed as if the robber moved onto vertex $x'$ instead.   In the next turn, any move by the robber from $x'$ is also a move that could have been made from vertex $x$, and so the game proceeds.
Eventually the cops create a winning capture condition in $X'$.   If the robber is on the vertex $x$ but the cops have moved onto the vertex $x'$ in $X'$, then in the next move the robber is forced onto a vertex which is a neighbor of the vertex $x$, and thus the cop currently on vertex $x$ makes the capture in the next turn.

\end{proof}

\begin{example}\label{E:HomotopyInvariant}
   There is a fold map  $\rho:C_4\to P_3$ defined by $4\mapsto 2, i\mapsto i$ for $i\leq 3$:  
    \[
        \begin{tikzpicture}
            \draw (0,0) -- (1,0) -- (1,1) -- (0,1) -- (0,0);
            \draw[fill, ] (0, 0)node[below left]{\tiny $1$}  circle (2pt);
            \draw[fill, ] (1, 0)node[below right]{\tiny $2$}  circle (2pt);
            \draw[fill, ] (1, 1)node[above right]{\tiny $3$}  circle (2pt);
            \draw[fill, ] (0, 1)node[above left]{\tiny $4$}  circle (2pt);
        \end{tikzpicture}
\phantom{wwwwww}
   \begin{tikzpicture}
            \draw (0,0) -- (2,0);
            \draw[fill, ] (0, 0)node[below ]{\tiny $1$}  circle (2pt);
            \draw[fill, ] (1, 0)node[below ]{\tiny $2$}  circle (2pt);
            \draw[fill, ] (2, 0)node[below]{\tiny $3$}  circle (2pt);
        \end{tikzpicture}\]

  In each graph, starting two cops on vertices 1 and 2 results in an immediate cop win, since any starting position for the robber is connected to a cop.    Thus $c_{SA}(C_4) = c_{SA}(P_3) = 2$. 
\end{example}

Note that the classic cops and robbers game, as well as the fully active variant, are \textit{not} homotopy invariant.
\begin{example}\label{E:notinvariant}
     Consider $P_3, C_4$ which are homotopy equivalent as seen in Example \ref{E:HomotopyInvariant}.  In both the classic and fully-active game, the cops have a winning strategy with one cop on $P_3$, but not on $C_4$.  
\end{example}

In the case of reflexive graphs, Sneaky-Active Cops and Robbers is an equivalent game to the classic Cops and Robbers. 

\begin{proposition}\label{P:reflexiveclassic}
  If $X$ is a reflexive graph, then  $c_{SA}(X)=c(X)$. 
 
\end{proposition}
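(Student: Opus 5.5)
We prove the two inequalities $c_{SA}(X)\le c(X)$ and $c(X)\le c_{SA}(X)$ separately. On a reflexive graph every vertex is adjacent to itself, so the move sets coincide: staying in place in the classic game is the same as traversing the loop in the sneaky-active game. Thus a sequence of positions is a legal play in one game if and only if it is a legal play in the other. The two games therefore differ only in the capture rule: in the classic game a capture also occurs when the robber steps onto a cop, while in the sneaky-active game it does not.

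For $c_{SA}(X)\le c(X)$, take a winning classic strategy for $k$ cops and use it verbatim in the sneaky-active game. Suppose the classic game would end with a capture. If the capture happens on a cop move, it is also a sneaky-active capture. If it happens because the robber moves onto a cop's vertex $v$, then in the sneaky-active game the robber now sits on $v$ with that cop. On the next cop turn, the cop uses the loop at $v$ to stay on $v$, which counts as moving onto the robber's vertex, so the robber is captured. The one edge case is a robber placed initially on a cop's vertex. Under the sneaky-active rules this is not yet a capture, but the cop captures on the first cop turn by looping. So every classic capture becomes a sneaky-active capture at most one turn later.

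For $c(X)\le c_{SA}(X)$, take a winning sneaky-active strategy for $k$ cops and use it in the classic game. The cops' moves are legal in the classic game. Each sneaky-active capture, where a cop moves onto the robber, is also a classic capture. The classic game may end earlier, if the robber steps onto a cop, but that is only a further win for the cops. Any robber behaviour in the classic game is legal robber behaviour in the sneaky-active game, so the strategy still forces capture.

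The main obstacle is bookkeeping rather than mathematics. We must check that the translation of robber moves between the games is faithful, using reflexivity so that ``must move'' imposes no restriction. We must also handle the placement phase and the timing of the one-turn delay, so that the cop strategy is well defined when the classic game would already have ended. Combining the two inequalities gives $c_{SA}(X)=c(X)$.
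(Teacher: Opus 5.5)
Your proof is correct and follows essentially the same route as the paper. Reflexivity makes the move sets of the two games coincide, and the one discrepancy in the capture rules (the robber stepping onto a cop) is repaired by that cop traversing its loop on the next turn; splitting into the inequalities $c_{SA}(X)\le c(X)$ and $c(X)\le c_{SA}(X)$ simply makes explicit what the paper compresses into the claim that the two games are identical.
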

\begin{proof} 

If the graph $X$ is reflexive, then in Sneaky-Active Cops and Robbers the players always have the option to remain on the same vertex by traversing the loop on that vertex, and so the allowed moves are identical to the allowed moves for the classic Cops and Robbers game.    Furthermore, the robbers achieve a capture condition in the classic game either by moving onto the robber or by having the robber move onto them.  The second case is not a capture in the sneaky-active variant, but if the robber moves onto a vertex occupied by a cop, the cop can capture them in the next turn by traversing the loop.  Thus the capture conditions are also equivalent in a reflexive graph.   Therefore on a reflexive graph the games are identical and  $c_{SA}(X)=c(X)$.

\end{proof}

For reflexive graphs, the folding operation corresponds to the dismantling of graphs described in \cite{BonatoCaR}.   Thus the homotopy-invariance of the sneaky-active cop number of Theorem \ref{T:invariant} is equivalent to the invariance of the cop number under dismantling in the classic game {\cite{BonatoCaR} Theorem 2.3} for reflexive graphs.

\begin{example}\label{E:reflexiveequivalent}
Consider the looped path graph $I^\ell_4$:
\[
\begin{tikzpicture}
    \foreach \x in {0,...,4}{
    \draw[fill, ] (\x, 0)node[below ]{\tiny $\x$}  circle (2pt);
    \draw (\x,0)  to[in=50,out=140,loop, distance=.7cm] (\x,0);
    };
    \draw (0,0) -- (4,0);

\end{tikzpicture}
\]

 Since  $c(I^\ell_4)=1, c_{SA}(I^\ell_4)=1$  as well by Proposition \ref{P:reflexiveclassic}.

\end{example}

We can use the similarities between our  sneaky-active variant and the classic 
and fully active games to establish some bounds for the sneaky-active cop number relative to the cop number for these other games.

\begin{lemma}\label{L:bounds}
    For any graph $X$,  $c_{{A}}(X)\leq c_{SA}(X)$ and $c(X)-1\leq  c_{SA}(X)\leq 2c(X)$.  
\end{lemma}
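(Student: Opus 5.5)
The three inequalities are independent, and I would prove each by a simulation argument: the cops run a known winning strategy from one game on an imagined ("shadow") game, and we check that capture in the shadow game forces capture in the real game. I take them in order of increasing difficulty.

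\emph{$c_A(X)\le c_{SA}(X)$.} The legal moves in Fully Active Cops and Robbers (Definition \ref{D:FACR}) are exactly those of the sneaky-active game: every piece must traverse an edge, possibly a loop. The only difference is that the fully active game has strictly more capture conditions, since there a robber moving onto a cop is also captured. So the cops simply play a winning sneaky-active strategy with $c_{SA}(X)$ cops. Any robber play in the fully active game is a legal sneaky-active play. Either the game ends earlier because the robber steps onto a cop, or the sneaky-active strategy produces a cop moving onto the robber, which is a capture in both games.

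\emph{$c_{SA}(X)\le 2c(X)$.} Take a classic winning strategy for $k=c(X)$ cops and replace each classic cop $c_i$ by a pair of sneaky-active cops occupying the two ends of an edge $\{u,w\}$, where $u$ is the position of $c_i$. Since $X$ has no isolated vertices, such a $w$ exists. The pair is maintained so that it always occupies the current position of $c_i$:
\begin{itemize}
\item if $c_i$ moves from $u$ to a neighbour $v$, the cop at $u$ moves to $v$ and the cop at $w$ moves to $u$, so the pair now occupies $\{v,u\}$;
\item if $c_i$ stays at $u$, the two cops swap along the edge.
\end{itemize}
Both moves are legal active moves. The sneaky-active robber must move along an edge, so its plays are legal classic plays, and the classic strategy eventually captures it. There are two kinds of classic capture:
\begin{itemize}
\item if $c_i$ moves onto the robber, the corresponding pair member moves onto the robber;
\item if the robber moves onto $u$, then on the next cop turn the pair swaps or advances, and either way a pair member enters $u$, which captures the robber.
\end{itemize}

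\emph{$c(X)-1\le c_{SA}(X)$, i.e.\ $c(X)\le c_{SA}(X)+1$.} This is where I expect the main obstacle. The difficulty is that the classic robber may stay put on an unlooped vertex, which is not a legal sneaky-active move. My plan is to have $k=c_{SA}(X)$ cops play the sneaky-active strategy (legal in the classic game, since active moves are classic moves) against a shadow robber $s$. The shadow is kept within distance $1$ of the real robber $r$ and moves legally each turn, as follows:
\begin{itemize}
\item if $r$ moves from $v$ to $w$, set $s=w$ when the old $s$ was $v$, and otherwise $s=v$;
\item if $r$ stays at $v$, alternate $s$ between $v$ and a fixed neighbour of $v$.
\end{itemize}
When the sneaky-active cops capture $s$, the real robber is at distance at most $1$ from a cop. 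The remaining gap is that the robber then moves away, so distance $\le 1$ does not by itself give a classic capture. I would try to close this gap with the extra $(k+1)$-st cop, for instance by choosing the shadow so that it lags behind the robber and using the extra cop to cover the stationary case. If that fails, I would instead strengthen the shadow invariant so that the capturing cop is adjacent to $r$ on the cop's own turn. Pinning down this invariant is the step I would work out first.
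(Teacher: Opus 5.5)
Your arguments for $c_A(X)\le c_{SA}(X)$ and $c_{SA}(X)\le 2c(X)$ are correct and match the paper's: identical moves with a stricter capture rule, and cops travelling in adjacent pairs that advance or swap.

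The gap is the remaining inequality $c(X)-1\le c_{SA}(X)$, which your proposal leaves unproved. The distance-$\le 1$ shadow cannot close it alone. When the cops capture the shadow, the real robber may be on a neighbouring vertex at the start of his own turn and simply step away, and you have not given the extra cop a role that prevents this. The missing idea is that no new simulation is needed. You have already shown $c_A(X)\le c_{SA}(X)$, and Gromovikov, Kinnersley and Simone prove $c(X)-1\le c_A(X)$ (\cite{FullyActive}, Theorem 3.1). Chaining the two gives the bound, and this is exactly the paper's proof.

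If you want a self-contained argument, use the extra cop to limit how often the robber can stand still, not to patch the shadow. Let it chase the robber along shortest paths. Its distance to the robber after each cop turn never increases, and it drops by one every time the robber stays put. So the robber can stay put only finitely many times. The other $c_{SA}(X)$ cops walk to their sneaky-active starting positions. Once they are there and it is their turn, they treat the robber's current vertex as his placement and run a winning sneaky-active strategy. Whenever he stays put, they walk back to the starting positions and start over. After his last stay, every robber move is a legal active move, and every sneaky-active capture is a classic capture, so the strategy ends in capture.

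This chase needs $X$ connected, and so does the inequality itself. Let $G$ be $C_4$ with a loop added at one vertex. Then $c(G)=2$ while $c_{SA}(G)=1$: a cop on the looped vertex waits on its loop, and a robber on the opposite vertex is forced next to it. So the disjoint union $X=G\sqcup G$ has $c(X)-1=3>2=c_{SA}(X)$ by Observation \ref{O:notconnected}.
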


    \begin{proof}
    We begin by comparing our game to the fully active variant:  the moves available are exactly the same, but the capture condition in the sneaky-active game is more stringent than the fully active variant, and so  $c_{A}(X)\leq c_{SA}(X)$. 
    This also gives us a comparison to the classic game,  since $c(X)-1\leq c_{A}(X)$ by \cite{FullyActive}, Theorem 3.1.   So   $c(X)-1\leq c_{SA}(X)$ as well.  
    
    Lastly,     if $k$ cops have a winning strategy in the classic game, then $2k$ cops have a winning strategy in the sneaky-active game, since we may place the cops in pairs:  if  $c_1, \ldots, c_k$ are the cop positions in the classic game, then we place another $k$ cops on  $c'_1, \ldots, c'_m$ for $c'_i$  adjacent to $c_i$.   Have the cops on $c_i$ play according to the classic strategy, and have their partners $c'_i$ move with them to remain adjacent.    If the strategy  in the classic game requires the cop to remain stationary on a vertex, then in the sneaky-active game we switch the pair of cops $c_i, c'_i$.    Eventually the cops achieve a capture in the classic game.  If this capture in the classic game is made by having the robber move onto a cop vertex $c_i$, then the partner cop on $c'_i$ will achieve capture in the sneaky-active game in the following turn.   Therefore  $2k$ cops is sufficient in the sneaky-active game and $c_{SA}(X)\leq 2c(X)$.   \end{proof}

We conclude this section with some general observations about  Sneaky-Active Cops and Robbers.   Firstly, for the sneaky-active variant there are some differences in behavior between bipartite and non-bipartite graphs.

\begin{obs}\label{O:notbipartite}
   On a connected non-bipartite graph $X$, if $k$ cops $c_1, \ldots c_k$ have a winning strategy where  $c_i$ starts on vertex $w_i$, then they also have one where $c_i$ starts on vertex $v_i$, so  the start positions of the cops is irrelevant:   since $X$ is not bipartite, there is always an even length walk from $v_i$ to $w_i$.  By appending walks of the form $w_ixw_ix\cdots xw_i$, we may  these walks  have the same length for all $i$, say $\ell$.  Thus, after $\ell$ turns, we can move the cops from $v_i$ to  $w_i$.   Since they have a winning strategy from this position regardless of where the robber starts or has moved to, they may capture the robber.

   This does not hold if $X$ is bipartite.  If a cop $C$ starts on a vertex that is in the same partite set as the robber $R$ chooses to start on, then this cop can never capture the robber, since $d(C, R)$ will always be odd after a cop move.   Thus as soon as the robber chooses a starting vertex, we may disregard any cops that are on vertices of the same partite set. 

   Thus the partite set that the cops start on matters. Within this partite set the starting position does not matter, since the cops can move from one initial configuration to another  in an even number of moves as in the non-bipartite case,  at which point the robber is still in their partite set.

\end{obs}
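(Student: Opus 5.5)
The plan is to treat the observation as three claims and prove them in order, each by an explicit walk construction. The key fact throughout is that $X$ has no isolated vertices, so every vertex $w$ has a neighbor $x$. Hence closed walks $w x w$ of length $2$ always exist, and any walk ending at $w$ can be padded by an arbitrary even number of steps while still ending at $w$.

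For the non-bipartite case, fix the two configurations $(v_1,\dots,v_k)$ and $(w_1,\dots,w_k)$. Since $X$ is connected and contains an odd cycle, I first produce for each $i$ a walk of even length from $v_i$ to $w_i$. If $d(v_i,w_i)$ is even, a shortest path suffices. Otherwise I walk from $v_i$ to some vertex of an odd cycle, traverse the cycle once, and return. This changes the parity, and then I continue to $w_i$. Let the lengths be $\ell_1,\dots,\ell_k$, all even, and set $\ell=\max \ell_i$. I pad each walk with $(\ell-\ell_i)/2$ copies of $w_i x_i w_i$ so that all walks have length exactly $\ell$.

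During these first $\ell$ cop turns the cops simply follow their walks, and every step is legal because each step is along an edge. The robber moves in the meantime, but capture is irrelevant to them. After $\ell$ turns the cops occupy $(w_1,\dots,w_k)$, the robber is at some vertex, and it is the robber's turn. This is where I expect the main care to be needed. The hypothesis must be read as saying the $w$-configuration is winning against any robber placement, and the robber's choice of starting vertex corresponds to the robber's first move after the cops are placed. So the position reached is exactly an initial position of the $w$-game, and the winning strategy applies from there.

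For the bipartite case, I use the parity invariant. Let $C$ and $R$ start in the same part, so $d(C,R)$ is even. Each single move of either piece changes which part it lies in, so after the cop moves the two pieces lie in different parts and their distance is odd, never $0$. Capture requires distance $0$ immediately after a cop move, so this cop never captures and can be discarded. The last claim, that within the cops' chosen parts the positions do not matter, repeats the padding argument. Each $v_i$ and $w_i$ lie in the same part, so $d(v_i,w_i)$ is even, the walks can again be equalized to a common even length $\ell$, and the robber's part after $\ell$ cop turns and $\ell$ robber turns is unchanged. This is needed because the winning property of the $w$-configuration presumably depends only on the robber being in the appropriate part.
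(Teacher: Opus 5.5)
Your proposal is correct and follows the paper's argument. It uses the same three ingredients: even walks padded with $w_i x w_i$ to a common length $\ell$; the parity argument that a cop starting in the robber's part is never on the robber's vertex after a cop move; and the same even-length repositioning within partite sets. Beyond the paper, you build the even walk explicitly by a detour around an odd cycle, and you note that the position after the robber's $\ell$-th move is exactly an initial position of the $w$-game.
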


\begin{lemma}\label{L:bipartite}
    If $X$ is connected and bipartite, then in the winning strategy for $c_{SA}(X)$ cops, the  cops must start with $c_{SA}(X) /2$ cops in each partite set.  
\end{lemma}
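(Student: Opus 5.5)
Let $k=c_{SA}(X)$ and let $A,B$ be the two partite sets of $X$. We fix a winning strategy for $k$ cops with $a$ cops starting in $A$ and $b$ in $B$, $a+b=k$, and show that $a=b=k/2$. By symmetry it suffices to derive a contradiction from $a<k/2$. The only tool needed is the parity argument of Observation \ref{O:notbipartite}: if the robber starts in a partite set, then every cop starting in that same set stays at odd distance from the robber after each cop move and can never capture. Those cops are irrelevant, so the game is effectively won by the cops in the opposite partite set alone.

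First, we decouple the two halves of the strategy. Suppose the robber starts in $B$. Then the cops starting in $B$ can never capture, so the $a$ cops starting in $A$ must capture the robber on their own, against every robber start in $B$ and every robber continuation. Their moves may depend on the moves of the useless $B$-cops, but those moves are themselves a function of the game history (robber moves and cop moves). So we can simulate the $B$-cops internally and obtain a strategy for just the $a$ cops in $A$ that wins against every robber starting in $B$. Similarly, $b$ cops starting in $B$ win against every robber starting in $A$.

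Next, we produce a winning strategy with fewer than $k$ cops. Without loss of generality $a\le b$; assume $a<b$. We play with $2a<k$ cops: the original $a$ cops placed in $A$, plus a second group of $a$ cops placed in $B$. The second group is obtained by starting at neighbours of the $A$-starting positions and then applying the $A$-strategy shifted by one step. It is cleaner to proceed as follows. Since $X$ is connected, pick an edge $uv$ with $u\in A$ and $v\in B$. A cop group of $a$ cops placed in $B$ can, in one move, reach positions in $A$. The plan is to show that $a$ cops starting in $B$ can win against a robber starting in $A$. After the first cop move the cops sit in $A$ and the robber (who has not yet moved) sits in $A$, which is the wrong parity. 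We therefore cannot simply delay.

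Instead we use the symmetric fact directly. We use $a$ cops in $A$ (winning against robbers starting in $B$) together with $a$ further cops. These further cops must win against robbers starting in $A$, but we only know that $b$ cops suffice for that. So the argument as set up gives $c_{SA}\le a+b$, not a contradiction. The real content must therefore be the reverse: a group that wins against robbers starting in $B$ can be converted into a group of the same size in $B$ that wins against robbers starting in $A$. The conversion I would try is to place each cop on a neighbour of its original position and let the robber's first move be treated as its placement. Concretely, cops starting at neighbours $w_i'$ of the $A$-positions $w_i$ make their first move onto $w_i$. Meanwhile the robber, who started in $A$, moves into $B$, and from then on the position is exactly an $A$-group-versus-$B$-robber starting configuration. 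By Observation \ref{O:notbipartite}, starting positions within a partite set do not matter, so this group wins. Hence $a$ cops in $B$ suffice against robbers starting in $A$, and $2a<k$ cops win overall, contradicting minimality. The main obstacle is checking this timing step carefully: the robber also gets a move before the cops reach $w_i$, and the cops' first move must not itself be required to capture. I expect this to work out because capture only happens on cop moves.
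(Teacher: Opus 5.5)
Your final argument is correct and is essentially the paper's proof: the paper likewise converts a group winning from $A$ against robbers in $B$ into a same-size group winning from $B$ against robbers in $A$ (start each cop on a neighbour of $w_i$, and after one round the original starting configuration recurs), then concludes by minimality, which is your $2a<k$ contradiction. In a write-up, delete the abandoned detours in the middle. Also note that the timing worry is moot: the cops reach the $w_i$ before the robber moves at all, so you do not need Observation \ref{O:notbipartite} for that step.
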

\begin{proof}
    Suppose that there is a winning strategy for  $k$ cops placed in one partite set provided that the robber starts on a vertex in the other partite set.   Then $k$ cops placed in the opposite partite set will also have a winning strategy for when the robber chooses the other partite set as a starting point, since after the first move, both robber and cops have swapped partite sets and the cops can play their original winning strategy.  

    Thus if $k$ is the minimal number of cops needed for a winning strategy in one partite set, then $c_{SA}(X) = 2k$.

\end{proof}

Finally, we address non-connectivity of graphs.

\begin{obs}\label{O:notconnected}
    If $X$ is disconnected with $m$ components $X_1, \ldots X_m$,  once a robber is placed on component $X_i$ then only the cops placed on component $X_i$ can capture.    Thus  $c_{SA}(X)=\sum_{i=1}^m c_{SA}(X_i)$.
\end{obs}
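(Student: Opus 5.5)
The statement $c_{SA}(X)=\sum_{i=1}^m c_{SA}(X_i)$ splits into two inequalities, and I will prove each separately. The one fact used throughout is that pieces move only along edges, so a piece never leaves its component. Consequently a cop can capture the robber only if both start in the same component.

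For the upper bound $c_{SA}(X)\le \sum_i c_{SA}(X_i)$, I will place $c_{SA}(X_i)$ cops on each component $X_i$, in the starting configuration of some winning strategy for $X_i$. When the robber chooses a starting vertex in some $X_j$, the cops on $X_j$ run their winning strategy there. The cops on the other components must still move every turn. Since there are no isolated vertices, each of them has a neighbor, so it can simply oscillate along an edge. The cops in $X_j$ then capture the robber. The strategy on $X_j$ is unaffected by the other cops, because the game on $X_j$ only sees pieces inside $X_j$.

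For the lower bound, suppose $k<\sum_i c_{SA}(X_i)$ cops are placed. By pigeonhole, some component $X_j$ receives at most $c_{SA}(X_j)-1$ cops. Restricted to $X_j$, this is a sneaky-active game with fewer cops than $c_{SA}(X_j)$. By the definition of $c_{SA}(X_j)$, for this cop placement there is a robber start and a response strategy that evades these cops forever. The robber starts in $X_j$ and follows that strategy. Cops outside $X_j$ never enter $X_j$, so they are irrelevant, and the cops inside $X_j$ behave like cops in a game on $X_j$ alone. Hence the robber evades forever, and $c_{SA}(X)\ge\sum_i c_{SA}(X_i)$.

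The only subtle point is the lower bound. There I must use that fewer than $c_{SA}(X_j)$ cops lose from \emph{every} initial placement, not just from some placement. This follows directly from the definition of the cop number as the minimum number of cops admitting a winning strategy, where the cops are free to choose their placement. The edge case of a component receiving zero cops is trivial, since the robber is never captured there.
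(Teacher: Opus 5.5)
Your proof is correct and follows the paper's approach. The paper justifies the formula only by remarking that once the robber is placed in $X_i$, only the cops in $X_i$ can ever capture; your two inequalities (a winning team of $c_{SA}(X_i)$ cops in each component for the upper bound, and pigeonhole plus the robber's evasion strategy in an under-staffed component for the lower bound) supply the details that remark leaves implicit, with the passage from ``the cops have no winning strategy'' to ``the robber can evade forever'' relying on the standard determinacy of this finite game, as the paper also does.
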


\section{Cop numbers for particular graph families}\label{S:families}

We now compute the sneaky-active cop numbers for a variety of graph families, starting with cycles.

\begin{proposition}\label{P:cnoddcycle}
    If a graph is an odd cycle $ C_{2n+1}$, then $c_{SA}(C_{2n+1})=2$.
\end{proposition}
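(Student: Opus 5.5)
The proof splits into a lower bound and a matching upper bound. The lower bound $c_{SA}(C_{2n+1})\geq 2$ is immediate from Observation \ref{O:cn1}, since $C_{2n+1}$ has no loops: a single cop is shadowed by a robber who starts on the cop's vertex and then always moves onto whatever vertex the cop just moved to. So all the work is in showing that two cops have a winning strategy.

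For the upper bound I would label the vertices $0,1,\dots,2n$ cyclically. Since $C_{2n+1}$ is connected and non-bipartite, Observation \ref{O:notbipartite} says the starting positions of the cops do not matter, so I can choose them freely. The plan is to exploit parity: on an odd cycle, a single cop chasing the robber in one fixed direction can change the parity of their clockwise distance. Concretely, I would measure the clockwise distance $d$ from a cop to the robber along the cycle. After each full round (cop moves, then robber moves), a cop moving clockwise reduces $d$ by one, and the robber changes $d$ by $\pm 1$. So $d$ either stays fixed or drops by $2$ per round, modulo $2n+1$.

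I would use the two cops as a pincer. One cop moves clockwise every turn and the other counterclockwise, starting from the same vertex or from adjacent vertices. The robber lies in one of the two arcs between the cops, and that arc shrinks each round unless the robber slips past a cop. The main obstacle is the sneaky rule: the robber may step onto a cop's vertex, or swap past a cop, without being captured. I would handle this with parity. If the cops start on adjacent vertices, say $0$ and $1$, then every vertex is at even distance from one cop in one of the two directions along the cycle, because the two clockwise distances $d$ and $d-1$ have opposite parities. I would then analyze, round by round, the gap between the robber and the cop approaching it: a cop moving toward the robber at odd distance reaches distance $0$ or $2$ after the robber's reply. When the distance is $1$ just before the cop's move, the cop captures.

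The remaining danger is that the robber oscillates or passes through a cop. Passing through is only possible when the robber moves onto the cop's vertex, which leaves them at distance $0$, or onto the cop's neighbor. In either case the robber then lands adjacent to the other cop, or at the right parity for it, because the arc between the cops has shrunk. I expect this case analysis to be the delicate part. If it becomes messy, my fallback is to generalize Example \ref{E:SAgame}. The fold-based path argument does not apply, because the odd cycle does not fold. Instead I would directly show that with cops on $0$ and $1$ both moving clockwise in lockstep, one of the two distances is odd and decreases by $2$ each round unless the robber moves clockwise. The robber cannot move clockwise forever, since each such move keeps the gap to the rear cop fixed while the other cop's gap shrinks, and the total circumference is finite.
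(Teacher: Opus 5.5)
Your lower bound is fine, but the upper bound has a genuine gap. The case you defer as ``the delicate part'' is where the whole proof lives, and neither your pincer nor your fallback handles it.

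In a fixed pincer the robber really can escape. On $C_7$, put the cops on $0,1$ and the robber on $3$. The pincer sends the cops to $6$ and $2$. The robber steps onto the cop at $2$ without being captured, and when that cop moves on to $3$, the robber is outside the arc being closed and not adjacent to the other cop. So your assertion that after slipping past, the robber ``lands adjacent to the other cop, or at the right parity for it, because the arc between the cops has shrunk'' is unsupported, and it gives no progress measure. Three of your parity remarks also go wrong:
\begin{itemize}
\item The fact you cite (clockwise distances $d$ and $d-1$ have opposite parity) concerns the wrong pair of distances for a pincer.
\item Starting both cops on one vertex is actually harmful, since the two pincer distances $r$ and $2n+1-r$ then have opposite parity.
\item A cop move followed by a robber move preserves parity, so an odd distance is odd again after the robber's reply, not $0$ or $2$.
\end{itemize}

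Your fallback fails outright. If both cops move clockwise in lockstep and the robber also moves clockwise every turn, both clockwise gaps stay constant---nothing shrinks---and the robber circles forever (for example, cops on $0,1$ and robber on $4$ in $C_7$). Example~\ref{E:SAgame} works only because a path has ends.

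The missing idea is the parity device in the paper's proof. Put the cops on adjacent vertices, and let $a,b$ be the lengths of the paths from each cop, heading away from the other cop, to the robber. Since $a+b+1=2n+1$, the numbers $a$ and $b$ have the \emph{same} parity.
\begin{itemize}
\item If both are odd, the cops close in along these paths. After every cop move both distances are even, so either one is $0$ (capture) or both are at least $2$. The robber's move changes each distance by $\pm 1$, so it can never land on a cop. The robber stays trapped between the cops while $a+b$ drops by $2$ per round, which forces capture.
\item If both are even, the cops first swap places. The robber's reply makes both distances odd, and the first case applies. A robber who starts on a cop is caught by the swap itself.
\end{itemize}
This invariant is exactly what rules out the sneaking you were worried about.
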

\begin{proof}
    By Observation \ref{O:cn1}, $c_{SA}(C_{2n+1})\geq 2$.  To show that two cops have a winning strategy, suppose we start the cops on vertices $1$ and $2$.  Consider the paths from each cop away from each other to the initial robber position;  these paths are either both even or both odd.  If these lengths are both odd, the cops can capture the robber by moving along these paths towards the robber.  After each cop turn the distance from the robber is even, and so the robber cannot sneak by either cop by moving onto their position and remains stuck between the two and eventually captured.   If the lengths of the paths are both even, the cops switch places on their first turn, making the distance both odd after the robber's first move, and then enact the previously described strategy.

\end{proof}

We  now consider even cycles. We have already seen in Example \ref{E:HomotopyInvariant} that $c_{SA}(C_4)=2$ so we consider larger even cycles.

\begin{proposition}\label{P:cnevencycle}
   If a graph is an even cycle $C_{2n}$ with $n \geq 3$, then   $c_{SA}(C_{2n})=4$. 
\end{proposition}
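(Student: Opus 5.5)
By Lemma \ref{L:bipartite}, $C_{2n}$ is connected and bipartite, so $c_{SA}(C_{2n})=2k$, where $k$ is the minimal number of cops in one partite set needed to win against a robber starting in the other partite set. Equivalently, $k$ cops suffice when we only count the cops whose distance to the robber is odd after each cop move (Observation \ref{O:notbipartite}). It therefore suffices to show $k=2$: two cops in the correct partite set win, and one does not.

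For the upper bound, I adapt the odd-cycle argument of Proposition \ref{P:cnoddcycle}. Place two cops on vertices $1$ and $3$. Suppose the robber starts on an even vertex, so its distance to each cop is odd. Both arcs of the cycle from the robber to each cop then have odd length, since $2n$ is even. Each cop walks toward the robber along the arc that excludes the other cop, so the robber sits in the arc between them. After each cop move the cop-to-robber distances are even. Hence the robber can never step onto a cop without a cop then stepping onto it, and it cannot jump past a cop: passing requires moving onto the cop's vertex, after which that cop captures on the next turn. The arc between the cops shrinks by two each round, so capture occurs. By symmetry, two cops on odd vertices handle a robber starting on an odd vertex. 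Four cops, on vertices $1,3,2,4$, therefore win.

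For the lower bound, it suffices to show that one relevant cop cannot win in its partite set. Then in any placement of three cops, some partite set has at most one cop, and the robber starts in that set. The remaining cops are irrelevant by Observation \ref{O:notbipartite}, so only a single cop at odd distance matters. The robber's strategy against that lone cop $c$ uses $n\ge 3$. After the cop's move the distance $d(c,R)$ is even and at least $2$. The robber moves so that, after its move, the distance is at least $3$: if $d\ge 4$ it can move anywhere, and if $d=2$ it moves away, which is possible since $2n\ge 6$ gives the cycle room, so the distance along the shorter arc becomes $3$. Then the cop moves and the distance is at least $2$, so no capture occurs. The robber must also choose a starting vertex at distance at least $3$ from the cop, which exists because the cycle has diameter $n\ge3$ and a vertex at odd distance at least $3$ exists.

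The main obstacle is making the lower-bound distance bookkeeping airtight on the cycle. Distances are measured along the shorter arc, so I must check that moving away when $d=2$ really yields distance $3$ rather than wrapping around. This holds when $2n\ge 6$: with $d=2$ the longer arc has length $2n-2\ge 4$, so the new distance is $\min(3,2n-3)=3$. This is exactly where $C_4$ differs and gives the value $2$ instead.
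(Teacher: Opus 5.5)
Your argument is correct in substance, but it takes a different route from the paper's.

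For the upper bound, the paper builds no strategy. It quotes $c(C_{2n})=2$ for the classical game and applies $c_{SA}\le 2c$ from Lemma~\ref{L:bounds}, whose pairing argument turns each classical cop into a cop plus an adjacent partner. You instead give an explicit pincer strategy with two relevant cops per partite set, adapted from Proposition~\ref{P:cnoddcycle}.

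For the lower bound, the paper uses Lemma~\ref{L:bipartite} to reduce to two cops, one per partite set, and then says only that the robber runs away from the relevant cop. You handle three cops directly by pigeonhole on the partite sets. Your invariant ``distance at least $3$ after each robber move'' makes the evasion precise and shows exactly where $n\ge 3$ enters; the paper's one-line evasion leaves this implicit, even though it fails on $C_4$.

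So your version is self-contained: it does not depend on the classical cop number of cycles or on the evenness part of Lemma~\ref{L:bipartite}. The paper's version is shorter because it reuses general lemmas.

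Several parity statements need fixing.
\begin{itemize}
\item A relevant cop is at odd distance after each robber move and at even distance after each cop move. Your first paragraph says ``odd after each cop move,'' which is reversed.
\item The symmetric case needs two cops on \emph{even} vertices, not odd ones. Your final placement $1,3,2,4$ is already correct.
\item In the lower bound, the robber must start in the \emph{other} partite set, the one holding at least two cops, which then become irrelevant. As written, the robber starts in the set with at most one cop, so the remaining cops would be at odd distance and relevant. Your own choice of a start vertex at odd distance at least $3$ from the lone cop already puts the robber in the correct set, so only that sentence is wrong.
\item Drop the claim that a robber who moves onto a cop is captured on the next turn. On the loopless cycle that cop must leave its vertex, which is exactly the sneak of Observation~\ref{O:cn1}. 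What confines the robber between the two cops is parity alone: after each cop move both distances are even and positive, so one robber step can never reach a cop's vertex.
\end{itemize}
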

\begin{proof}
     Since $C_{2n}$ is bipartite,  know by By Lemma \ref{L:bipartite} that the winning cop strategy requires half the cops to be placed in each partite set.   If we have only two cops and place one in each partite set, the robber may evade the relevant cop indefinitely by moving around the circle away from her.   Thus $c_{SA}(C_{2n})$ is at least $4$.   Then by  \cite{BonatoCaR}, the classic cop number $c(C_{2n})=2$  and so by Lemma \ref{L:bounds}, $c_{SA}(C_{2n})\leq 2\cdot 2=4$.  Thus $c_{SA}(C_{2n})=4$.   
\end{proof}

\begin{example}\label{E:evencycle}

     Consider the bipartite graph $C_8$.    With $4$ cops, we can place them adjacent to each other as shown:  
    \[
        \begin{tikzpicture}
            \draw ({sin(0)},{cos(0)})\foreach \x in {1,...,8}{--({sin(45*\x)}, {cos(45*\x)})};
            \foreach \x in {1,...,4}{
            \draw[fill, red] ({sin(90*\x)}, {cos(90*\x)})  circle (2pt);
            \draw[fill, blue] ({sin(90*\x+45)}, {cos(90*\x+45)})  circle (2pt);
            };
            \draw[ thick, violet] ({sin(-45)},{cos(-45)}) node[above left]{\tiny $c_1$} circle (2.2pt);
            \draw[ thick, violet] ({sin(0)},{cos(0)}) node[above]{\tiny $c_2$} circle (2.2pt);
            \draw[ thick, violet] ({sin(45)},{cos(45)}) node[above right]{\tiny $c_3$} circle (2.2pt);
             \draw[ thick, violet] ({sin(90)},{cos(90)}) node[ right]{\tiny $c_4$} circle (2.2pt);
            \draw[ thick, teal] ({sin(45*4)},{cos(45*4)}) node[below]{\tiny $R$} circle (2.2pt);
        \end{tikzpicture}
    \]
   No matter where the robber starts, there will be two cops which have odd distance from the robber's initial position who can move around the cycle in opposite directions and  capture the robber.

\end{example}

Next we consider complete graphs.  

\begin{proposition}\label{P:cncomplete}
For a complete graph $K_n$ with $n \geq 2$, we have   $c_{SA}(K_n)=2$. 
\end{proposition}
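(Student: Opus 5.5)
The plan is to prove the two inequalities separately, treating the complete graph $K_n$ as simple (unlooped), as is standard.

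For the lower bound, $K_n$ has no loops, so Observation \ref{O:cn1} gives $c_{SA}(K_n)\geq 2$ directly. Concretely, with a single cop the robber starts on the cop's vertex. Whenever the cop moves from $v$ to some $w\neq v$, the robber moves from $v$ to $w$ as well, which is legal since $v\con w$. This is not a capture in the sneaky-active game, and the cop can never move onto the robber because no vertex is adjacent to itself.

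For the upper bound, I will exhibit a winning strategy for two cops $c_1,c_2$ placed on distinct vertices $1$ and $2$, which exist since $n\geq 2$. Wherever the robber starts, it is adjacent to at least one cop. If it starts on a vertex $v\notin\{1,2\}$, it is adjacent to both cops, and either cop captures immediately. If it starts on vertex $1$, then $c_2$ is on vertex $2\neq 1$, and $2\con 1$, so $c_2$ moves onto $1$ and captures. The case of vertex $2$ is symmetric. On the first cop turn the other cop must also move; it can move anywhere adjacent, which is harmless since capture has already occurred.

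So the key observation is that the robber cannot share a vertex with both cops at once, and in $K_n$ any cop on a different vertex is adjacent to the robber. The only real point requiring care is the lower bound, which is exactly the content of Observation \ref{O:cn1}; the upper bound is immediate once the cops start on distinct vertices. The case $K_2=P_2$ also fits this argument, since the robber must start on $1$ or $2$.
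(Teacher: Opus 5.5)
Your proof is correct and matches the paper's argument: the lower bound comes from Observation~\ref{O:cn1} because $K_n$ is loopless, and the upper bound comes from placing two cops on distinct vertices, so every robber start is adjacent to some cop and is captured on the first cop move. Your extra details (the lone cop being shadowed, and the forced move of the non-capturing cop) simply spell out the same reasoning.
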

\begin{proof}
    By Observation \ref{O:cn1}, $c_{SA}(K_n)\geq 2$.   If we start two cops on distinct vertices,  then any starting robber position is a neighbor of at least one of the cops and the robber is immediately captured. 
\end{proof}

\begin{corollary}\label{C:tree}
    If a graph is a tree $T$, then $c_{SA}(T)=2$.
\end{corollary}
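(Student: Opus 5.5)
The plan is to use homotopy invariance (Theorem \ref{T:invariant}): every finite tree folds down to a single edge $K_2$, and $c_{SA}(K_2)=2$ by Proposition \ref{P:cncomplete}. Since trees are implicitly assumed to have at least one edge (isolated vertices are excluded in $\Gph$), a tree $T$ has at least two vertices.

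First, I will show by induction on the number of vertices that a tree $T$ with at least three vertices admits a fold to a smaller tree. Take a leaf $x$ with unique neighbor $y$. Since $T$ has at least three vertices and is connected, $y$ has another neighbor $z \neq x$. Then
\[
N(x)=\{y\}\subseteq N(z),
\]
so the map $\rho$ sending $x\mapsto z$ and fixing every other vertex is a fold in the sense of Definition \ref{D:fold}. The image $T\setminus\{x\}$ is again a tree with at least two vertices, so it has no isolated vertices and still lies in $\Gph$. Iterating, we obtain a sequence of folds $T\to\cdots\to K_2$.

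By Observation \ref{O:fold}, this composite is a $\times$-homotopy equivalence, so $T\simeq K_2$. Theorem \ref{T:invariant} then gives
\[
c_{SA}(T)=c_{SA}(K_2)=2,
\]
using Proposition \ref{P:cncomplete} with $n=2$.

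The main point of care is the existence of the vertex $z$ and checking the containment of neighborhoods. A fold needs $N(x)\subseteq N(x')$; folding a leaf onto its own neighbor $y$ would fail, since $y\notin N(y)$ in a loopless graph. So the leaf must be folded onto a vertex at distance two. This is exactly where the hypothesis of at least three vertices is used.

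A direct game-theoretic check is also available as a sanity check. The lower bound $c_{SA}(T)\ge 2$ follows from Observation \ref{O:cn1}. For the upper bound, place one cop in each partite set (Lemma \ref{L:bipartite}); the relevant cop then chases the robber toward the leaves. However, the homotopy argument above is the intended proof.
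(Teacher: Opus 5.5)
Your proposal is correct and follows the paper's proof: fold $T$ down to $K_2$ leaf by leaf, then apply Theorem~\ref{T:invariant} together with $c_{SA}(K_2)=2$. You also check explicitly that a leaf must be folded onto a vertex at distance two rather than onto its own neighbor (which would require a loop), a detail the paper leaves implicit in ``starting with its leaves.''
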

\begin{proof}
    Any tree may be folded down to $K_2$, starting with its leaves, and so  $T\simeq K_2$.    By Theorem \ref{T:invariant} we have that $c_{SA}(T)=c_{SA}(K_2)=2$.
\end{proof}

Lastly we look at  Kneser graphs.  

\begin{definition}[\cite{Bondy}]\label{D:Kneser}
    A \textbf{Kneser graph} denoted $K(n,m)$ has vertex set $V(K(n,m))=\{S\subseteq [n]: |S|=m\}$, i.e.\ subsets of $\{1,\ldots, n\}$ of size $m$.  We have that $S_1\sim S_2$ when $S_1\cap S_2$.
\end{definition}

Recall that the \textit{Petersen graph} is the Kneser graph $K(5,2)$.

\begin{theorem}\label{T:cnpetersen}
    The sneaky-active cop number of the Peterson graph  $c_{SA}(K(5,2))= 3$.
\end{theorem}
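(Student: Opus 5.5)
The plan is to prove the two inequalities $c_{SA}(K(5,2))\ge 3$ and $c_{SA}(K(5,2))\le 3$ separately. Throughout I will use only the standard structure of the Petersen graph $P=K(5,2)$: it is $3$-regular, has no loops, has girth $5$ (so it is triangle-free and has no $4$-cycles), has diameter $2$, and is highly symmetric (vertex- and distance-transitive).

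\textbf{Lower bound.} I will give a robber strategy that evades two cops forever. The key observation is about a robber at $r$ facing a cop at any vertex $c$. If $c\neq r$, then $c$ is adjacent to at most one neighbor of $r$, since two such neighbors would give a $4$-cycle $r,a,c,b$. If $c=r$, then $c$ is adjacent to no neighbor of $r$, since $P$ is triangle-free. Hence two cops are together adjacent to at most two of the three neighbors of $r$.

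The robber uses this observation as follows.
\begin{itemize}
\item To start, two cops dominate at most $6$ of the $10$ vertices through their open neighborhoods. The robber places himself on a vertex not adjacent to either cop, so no cop can capture on the first cop turn.
\item Before each cop turn, the robber keeps the invariant that no cop is adjacent to him.
\item After each cop move, some neighbor of the robber is adjacent to no cop, by the observation. The robber moves there, which restores the invariant.
\end{itemize}
It does not matter if that neighbor is occupied by a cop, because sneaking onto a cop is allowed. The robber is therefore never captured, so $c_{SA}(P)\ge 3$.

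\textbf{Upper bound.} I will place the three cops on the three neighbors $u_1,u_2,u_3$ of a fixed vertex $v$. Each $u_i$ has two further neighbors, and these six vertices are distinct by girth $5$. Hence every vertex other than the $u_i$ is adjacent to a cop, and the cops capture immediately unless the robber starts on a cop vertex, say $u_1$. By symmetry this leaves essentially one configuration to analyze.

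In that configuration the robber at $u_1$ must move to one of $v$, $a$, $b$, where $a,b$ are the outer neighbors of $u_1$. The goal is to choose a cop move after which every vertex of $\{v,a,b\}$ is adjacent to a cop, without letting the robber simply settle on a cop vertex. My first attempt at such a move is:
\begin{itemize}
\item the cops at $u_2$ and $u_3$ move to the common neighbors of $u_2$ with $a$ and of $u_3$ with $b$, which exist by diameter $2$;
\item the cop at $u_1$ moves to $a$ or to $v$.
\end{itemize}

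\textbf{Main obstacle.} The main difficulty is the "sneaking" case, where the robber repeatedly steps onto a cop. I expect to handle it by a finite case analysis up to the automorphism group. The aim is to show that from any position in which the robber shares a vertex with a cop, the cops can reach a position of the form above, namely three cops whose combined neighborhoods cover all three neighbors of the robber. Since the group of $P$ acts transitively on paths of length $3$, the number of cases is small.

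If this direct analysis becomes unwieldy, my fallback is to use $c(P)=3$ together with Lemma \ref{L:bounds}. That lemma only gives $c_{SA}(P)\le 6$, so the fallback would also require a sharper pairing argument exploiting that $P$ is non-bipartite (Observation \ref{O:notbipartite}), which lets the cops re-synchronize their parity.
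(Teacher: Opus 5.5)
\textbf{Gap: the upper bound is not proved.} This is the substantive half of the theorem, and the proposal gives no winning strategy for three cops, only an expected case analysis. Your fallback through Lemma~\ref{L:bounds} yields just $c_{SA}(P)\le 6$. Moreover, the target you set for the first cop move cannot be met. After that move no cop can be adjacent to $v$: $N(v)=\{u_1,u_2,u_3\}$, the $u_i$ are pairwise non-adjacent, and each cop starts on some $u_i$ and must move to a neighbor of it, so no cop ends on any $u_j$. Hence no single move covers $\{v,a,b\}$. The obstruction is this, not sneaking, and what is missing is a two-move forcing line. As written, the proposal establishes only $c_{SA}(P)\ge 3$.

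Your own first move supplies that line if the cop at $u_1$ goes to $a$ rather than to $v$. In $K(5,2)$ labels, take $v=\{4,5\}$ and cops on $\{1,2\},\{1,3\},\{2,3\}$. By the symmetry fixing $v$, put the robber on $\{1,2\}$.
First move: send the cops to $\{3,4\},\{2,5\},\{1,4\}$ respectively. Now $\{3,4\}$ and $\{3,5\}$ are adjacent to cops, so the robber must go to the unoccupied, uncovered vertex $\{4,5\}$.
Second move: send $\{3,4\}\to\{1,5\}$, $\{2,5\}\to\{3,4\}$ and $\{1,4\}\to\{2,5\}$. These are adjacent to $\{2,3\}$, $\{1,2\}$, $\{1,3\}$ respectively, which are all the neighbors of $\{4,5\}$, so the robber is captured on the next turn.

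With this added, your route is a valid alternative to the paper's. The paper places cops on $\{a,b\},\{c,d\},\{a,d\}$, which leaves three robber starts, one needing a three-move sequence. Your placement forces the robber onto a cop and leaves a single case.

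Your lower bound is the paper's argument, but one line needs fixing. The claim ``if $c=r$ then $c$ is adjacent to no neighbor of $r$'' is false, since such a cop is adjacent to all three neighbors; triangle-freeness gives this for $c\sim r$. The clause is unnecessary anyway, because at the robber's turn no cop can share his vertex without having just captured him.
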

\begin{proof}
    We first show that $c_{SA}(K(5,2))\leq 3$. Place cops on $\{a,b\}, \{c,d\}, \{a,d\}$ where $a,b,c,d,e\in [5]$ are distinct.  To avoid immediate capture, the  robber must start on either $\{a,c\}, \{a,d\}$, or $\{b,d\}$.

    Suppose the robber begins on $\{a,c\}$.  Then the $\{a,b\}, \{c,d\}$ cops switch and $\{a,d\}$ moves to $\{c,e\}$.  The robber's neighbors are $\{b,e\}, \{b,d\}$, and $\{d,e\}$ all of whom are neighbors of one of the cops and they are caught.  Similarly if the robber starts on $\{b,d\}$, they are caught.

    If the robber starts on $\{a,d\}$, move the $\{a,d\}$ cop to $\{c,e\}$ and have the $\{a,b\}, \{c,d\}$ cops switch.  The robber then has to move to $\{b,c\}$.  Then, have the $\{a,b\}, \{c,e\}$ cops switch and have the $\{c,d\}$ cop move to $\{a,b\}$.  The robber has to move to $\{a,e\}$.  Finally switch $\{a,b\}, \{c,e\}$ with one of the $\{a,b\}$ cops and move the other to $\{d,e\}$.  The robber's neighbors are $\{b,c\}, \{b,d\}$, and $\{c,d\}$, all of whom are neighbors of a cop and thus the robber is captured.

    We next show that $c_{SA}(K(5,2)) > 2$. For any cop on vertex $c$ and robber on vertex $R$, we note that $|N(c)\cap N(R)|\leq 2$ with equality holding if and only if $c=R$.  So suppose there are two cops on vertices $c_1, c_2$.  Note that on any robber turn we have that that $c_1, c_2\neq R$, there can be at most 2 neighbors of $R$ that are also neighbors of either cop.  Since $d(R)=3$, there is a position the robber may move to in order to evade capture in the following cop turn. 
\end{proof}

\begin{proposition}
    For a Kneser graph $K(n,2)$ with $n \geq 6$, we have $c_{SA}(K(n,2))=3$. 
\end{proposition}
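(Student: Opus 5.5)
The plan is to prove the two inequalities separately. The upper bound will come from a dominating set of size $3$, which gives immediate capture. The lower bound will come from an evasion argument modelled on the second half of the proof of Theorem \ref{T:cnpetersen}, but using the extra room available when $n \geq 6$.

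For $c_{SA}(K(n,2)) \leq 3$, since $n\geq 6$ I place three cops on the pairwise disjoint vertices $\{1,2\}$, $\{3,4\}$ and $\{5,6\}$. Any $2$-subset $S\subseteq[n]$ has only two elements. It can therefore meet at most two of these three disjoint pairs, so it is disjoint from at least one of them. This holds even when $S$ is itself one of the three pairs. Hence every vertex of $K(n,2)$ is adjacent to some cop. Wherever the robber starts, some cop moves onto the robber on the first cop turn, as in Proposition \ref{P:cncomplete}.

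For $c_{SA}(K(n,2)) > 2$, note first that one cop never suffices, by Observation \ref{O:cn1}, since $K(n,2)$ has no loops. So suppose there are two cops, on $c_1$ and $c_2$. The robber should always stand on a vertex that is not adjacent to either cop, that is, a $2$-set meeting both $c_1$ and $c_2$; then no cop can move onto it.
\begin{itemize}
\item \emph{Initial placement.} After the cops are placed, the robber picks $x\in c_1$ and $y\in c_2$. If $x\neq y$, it starts on $\{x,y\}$. If $c_1=c_2=\{x,y\}$, it starts on that vertex. If the only choices force $x=y$ otherwise, it starts on $\{x,z\}$ for any $z\neq x$. In every case the starting vertex meets both $c_1$ and $c_2$, so the robber is not captured on the first cop move.
\item \emph{Robber turn.} At the start of a robber turn with the robber on $R$, the cops are not on $R$, or the game would be over. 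So $c_1\setminus R$ and $c_2\setminus R$ are both nonempty. Choose $x\in c_1\setminus R$ and $y\in c_2\setminus R$. If $x\neq y$, the robber moves to $\{x,y\}$. If $x=y$, it moves to $\{x,z\}$ with $z\notin R\cup\{x\}$, which is possible since $n\geq 6>3$. In both cases the new vertex is disjoint from $R$, so the move is legal. It also meets both $c_1$ and $c_2$, so neither cop is adjacent to it.
\end{itemize}
Repeating this forever, the robber is never captured.

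The only delicate point is the lower bound: one must check that the "meet both cops" vertex is always disjoint from the robber's current vertex. The argument above settles this by using the fact that $c_i\neq R$ at every robber turn. The upper bound is immediate once the three disjoint pairs are chosen. Together the two bounds give $c_{SA}(K(n,2))=3$.
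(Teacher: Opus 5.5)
Your proof is correct and follows the paper's argument: the upper bound uses the same dominating set $\{1,2\},\{3,4\},\{5,6\}$, and the lower bound keeps the same invariant, namely that the robber always sits on a vertex meeting both cop positions and hence non-adjacent to both. Your uniform choice of $x\in c_1\setminus R$ and $y\in c_2\setminus R$ replaces the paper's case split on whether the two cops are adjacent, and it has the small advantage of justifying the adjacent-cops case explicitly and covering the case of both cops on the same vertex, which the paper's case analysis glosses over.
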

\begin{proof} 

    We first show that 2 cops $c_1, c_2$ do not suffice.  There are two possible initial cop positions up to symmetry:  either the cops are adjacent or they are not.  In the first case the cops are adjacent, that is they are on vertices $\{a,b\}, \{c,d\}$, and the robber may place themself on a vertex non-adjacent to both.  If the cops are not adjacent and are on $\{a,b\}, \{b,c\}$, the robber may likewise place themself non-adjacent to both.  Now after each robber move, we suppose that neither cop has captured the robber, so $|c_i\cap R|\leq 1$.  If the cops are adjacent, then there is a vertex non-adjacent to both cops but also adjacent to the robber that the robber may move to in order to evade capture.  If the cops are not adjacent, then they occupy vertices $\{a,b\}, \{b,c\}$, then either the $b\in R$, and the robber may move to $\{a,c\}$ or $b\not\in R$ and the robber may move to $\{b,d\}$ evading capture.

    On the other hand, if 3 cops start on $\{1,2\}, \{3,4\}, \{5,6\}$, then no matter where the robber starts and moves, they will be adjacent to at least one cop and be captured.
\end{proof}

We may summarize these findings in Figure \ref{fig:tableofcopnumbers}. 

\begin{figure}[h]
    \[
    \begin{array}{|c||c|c|c|c|c|c|}
    \hline
    X& C_{2k+1} & C_{2k} & \text{Trees} & K_n &  K(n,2), n\geq 5 \\
    \hline
    c_{SA}(X)& 2&4 & 2 & 2  & 3 \\
    \hline
    \end{array}
    \]
    \caption{A table of cop numbers.}
    \label{fig:tableofcopnumbers}
\end{figure}

\section{The Categorical Product}\label{S:CP}

In this section, we consider the  categorical product of graphs of Definition \ref{D:prod}, and  relate the sneaky-active cop number of a product graph $X \times Y$ to the cop numbers of the graphs $X$ and $Y$.   We have already seen results in which bipartiteness plays a factor in the behavior of the sneaky-active cop number in Section \ref{S:basics}, and this section will show this phenomena continuing;  our results will depend on the bipartiteness of the various graphs involved.

To set the stage for the arguments which follow, recall that the vertices of $X \times Y$ are defined by the product of sets $V(X) \times V(Y)$, and edges are defined between $(x_1, y_1)$ and $(x_2, y_2)$ whenever there are edges $x_1 \sim x_2$ in $E(X)$ and $y_1 \sim y_2$ in $E(Y)$.   Thus any position of a cop or robber in $X \times Y$ is given by a position in $X$ and a position in $Y$, and a move in $X\times Y$ is given by a move in $X$ and a move in $Y$.   Hence a strategy in the product $X \times Y$ is given by two simultaneous strategies in the component graphs $X, Y$.

With this in mind, we begin by considering our first case, where none of the graphs involved are bipartite.  

\begin{theorem}\label{T:notbipartite}
    Suppose $X, Y$ are both connected and not bipartite.   Then $c_{SA}(X \times Y) = c_{SA}(X) + c_{SA}(Y)-1 $.

\end{theorem}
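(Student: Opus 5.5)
The plan is to exploit the remark just before the statement: a position in $X\times Y$ is a pair of positions, one in $X$ and one in $Y$, and a move in $X\times Y$ is a pair of moves. A cop at $(a,b)$ captures the robber at $(r,s)$ exactly when, on a cop turn, it moves so that $a$ lands on $r$ and $b$ lands on $s$ at the same moment. Write $m=c_{SA}(X)$ and $n=c_{SA}(Y)$. I will prove the lower bound $c_{SA}(X\times Y)\geq m+n-1$ by a splitting argument for the robber, and the upper bound $c_{SA}(X\times Y)\leq m+n-1$ by a sequential ``locking'' strategy for the cops.

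\textbf{Lower bound.} Suppose there are $m+n-2$ cops. Split them into a group $A$ of $m-1$ cops and a group $B$ of $n-1$ cops.
\begin{itemize}
\item In $X$, the robber plays an evasion strategy against the $X$-projections of the cops in $A$. One exists since $m-1<c_{SA}(X)$.
\item In $Y$, the robber independently plays an evasion strategy against the $Y$-projections of the cops in $B$.
\end{itemize}
The robber places itself only after the cops, and its $X$-move depends only on $X$-data while its $Y$-move depends only on $Y$-data, so this combined strategy is legal in $X\times Y$. A cop in $A$ can never land on the robber's $X$-coordinate, and a cop in $B$ can never land on its $Y$-coordinate. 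Hence no cop ever captures in the product.

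\textbf{Upper bound.} The key notion is a \emph{locked} cop in $X$: a cop whose $X$-coordinate equals the robber's $X$-coordinate right after a cop turn. Once locked, a cop stays locked forever. Whatever neighbor $r'$ of $r$ the robber moves to in $X$, the cop moves from $r$ to $r'$, which is legal because adjacency is symmetric. The $Y$-coordinate of a locked cop remains completely free. Cops not currently involved in the $X$-game still have legal $X$-moves, since there are no isolated vertices and they can oscillate along an edge. The same holds for $Y$-coordinates. With $m+n-1$ cops I proceed in phases.
\begin{enumerate}
\item Suppose $j\leq n-1$ cops are already locked in $X$. Then at least $m$ cops are unlocked.
\item Choose $m$ of the unlocked cops. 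By Observation \ref{O:notbipartite}, since $X$ is connected and not bipartite, they can walk in $X$ to the starting configuration of a winning $m$-cop strategy on $X$. Meanwhile the locked cops keep shadowing.
\item These $m$ cops then play the winning strategy against the robber's $X$-projection. That projection is just a robber on $X$, so the strategy applies, and eventually some cop lands on the robber in $X$. This is not a capture in $X\times Y$, since $Y$-coordinates were ignored, but that cop is now locked.
\item After $n$ such phases, $n$ cops are locked in $X$.
\end{enumerate}
Now turn to $Y$. Again by Observation \ref{O:notbipartite}, the $n$ locked cops walk their $Y$-coordinates to the start of a winning $n$-cop strategy on $Y$ and run it, while continuing to shadow in $X$. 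Some locked cop eventually lands on the robber's $Y$-coordinate on a cop turn. At that same moment its $X$-coordinate also lands on the robber's $X$-coordinate, so this is a genuine capture in $X\times Y$.

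I expect the main obstacle to be making this phased strategy rigorous rather than the counting. In each phase the free cops must be able to restart a winning strategy from an arbitrary mid-game position of the robber. This is exactly where non-bipartiteness of both factors is used, via Observation \ref{O:notbipartite}: the cops can reposition in a common number of steps while the robber wanders, and the strategy still wins from wherever the robber then is. I will also check that \emph{locking} really persists under the active rule that every piece must move; this is immediate because the cop copies the robber's $X$-move along the same edge. Finally, I will check that the lower-bound robber strategy is consistent with the robber choosing its start after the cops are placed.
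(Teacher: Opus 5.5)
Your proposal is correct and follows the paper's approach. The lower bound is the same split of the cops into $c_{SA}(X)-1$ evaded in $X$ and $c_{SA}(Y)-1$ evaded in $Y$, and the upper bound uses the same idea of shadowing in one factor while winning in the other, with the same count $c_{SA}(X)+c_{SA}(Y)-1$. The only difference is scheduling: you lock cops in $X$ one phase at a time and then finish in $Y$, while the paper plays both factors at once and reassigns cops to Team $X$ or Team $Y$ as they match either coordinate. Your serial version is, if anything, cleaner, since it spells out the mid-game restart via Observation \ref{O:notbipartite} that the paper's ``we begin again'' leaves implicit.
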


\begin{proof}
    Suppose we have $c_{SA}(X) + c_{SA}(Y) -1$ cops.  We place our cops arbitrarily.  By Observation \ref{O:notbipartite}, we know that we have winning strategies in both $X$ and $Y$ regardless of initial placement.    
    
    Phase 1:   we start with all cops playing a winning strategy in both $X$ and $Y$. 
  This will lead to a cop win in either $X$ or $Y$, meaning that we have a cop that moves onto a position with the same $x$ or $y$ coordinate as the robber. 
  If this cop 'captures' in $X$, we move her into Team $Y$;  if she 'captures' in $Y$, she is moved to Team $X$.  After this reassignment, we begin again:  all Team $Y$ cops will now  follow the robber in $X$,  while the Team $Y$ and unassigned cops together play  a winning strategy  $Y$, and all Team $X$ cops will follow the robber in $Y$ while Team $X$ and unassigned cops together play  a winning strategy  $X$.  

  If a cop from Team $X$ makes a capture in $X$, then the robber has lost, since that cop also shares the robber's $Y$ coordinate;  similarly if a cop from Team $Y$ makes the capture in $Y$, the robber loses.
    We continue until either the robber is captured in $X \times Y$ or until we do not have enough cops left in one of the groups to have a winning strategy.      This second case will happen  when  we drop to  $c_{SA}(X) -1$ cops in the combined Team $X$ and unassigned groups,  or $c_{SA}(Y) -1$ cops in the combined Team $Y$ and unassigned groups.   At this point   we move to Phase 2.  

    Phase 2: Assume without loss of generality that we have only $c_{SA}(X) -1$ cops  remaining in the combined Team $X$ and unassigned groups, and so the remaining cops have been assigned to Team $Y$, giving us $c_{SA}(Y)$ cops in Team $Y$.  These cops can now capture the robber by playing a winning strategy in $Y$ while shadowing the robber's position in $X$.     
    
    Thus the cop player  can always win with $c_{SA}(X) + c_{SA}(Y)-1$ cops.  To show that the robber can win if there are fewer cops, suppose there are  at most $c_{SA}(X) -1 + c_{SA}(Y) -1$ cops.    The robber  has a strategy to avoid $c_{SA}(X)-1$ cops in $X$, and  a strategy to avoid $c_{SA}(Y)-1$ cops in $Y$.    Thus the robber can avoid all cops by arbitrarily dividing them into  $c_{SA}(X) -1$ `avoid in $X$' cops and $c_{SA}(Y)-1$ `avoid in $Y$' cops and playing the avoidance strategy in each graph,  hence avoiding all cops in $X \times Y$. 
\end{proof}

\begin{example}  \label{E:nobipartite} Consider  cycle graphs $C_i, C_j$ where $i, j$ are both odd, $i,j\geq 3$.    Then $c_{SA} (X) = c_{SA} (Y) = 2$ and so $c_{SA}(X \times Y) = 3$.   We illustrate the strategy of Theorem \ref{T:notbipartite} in this example.  
\[\begin{tikzpicture}
        \foreach \x in {0,...,4}{
        \foreach \y in {0,...,2}{
        \draw[fill] ({\x}, {\y})  circle (2pt);
        };
        };
         \foreach \x in {0,...,3}{
        \draw[gray] ({\x},0)--({\x+1},1);
        \draw[gray] ({\x},0)--({\x+1},2);
        \draw[gray] ({\x},1)--({\x+1},0);
        \draw[gray] ({\x},1)--({\x+1},2);
        \draw[gray] ({\x},2)--({\x+1},0);
        \draw[gray] ({\x},2)--({\x+1},1);
        };
        \draw[gray] (0,0) edge[ ] (4,1);
        \draw[gray] (0,0) edge[ ] (4,2);
        \draw[gray] (0,2) edge[ ] (4,1);
        \draw[gray] (0,1) edge[ ] (4,0);
        \draw[gray] (0,1) edge[ ] (4,2);
        \draw[gray] (0,2) edge[ ] (4,0);

        \draw[ thick, violet, fill] (1,0) node[below ]{\tiny $c_1$} circle (2.2pt);
        \draw[ thick, violet, fill] (1,1) node[above ]{\tiny $c_2$} circle (2.2pt);
        \draw[ thick, violet, fill] (2,1) node[above ]{\tiny $c_3$} circle (2.2pt);

        \draw[ thick, teal, fill] (3,0) node[below ]{\tiny $R$} circle (2.2pt);
    \end{tikzpicture}\]
     At the beginning, all three cops play a winning strategy in both $X$ and $Y$.  If Cop 1 captures the robber in $X$, she gets reassigned to  Team $Y$ and begins shadowing the robber in $X$.  Cop 2 and Cop 3 play their winning strategy in $X$, while all three cops play as a team in  $Y$.   Suppose next that Cop 2 captures the robber in $Y$, and is assigned to Team $X$, while only Cop 3 remains unassigned.    Now we have Cops 1 and 3 playing to win in $Y$, Cops 2 and 3 playing to win in $X$, while Cop 1 shadows the robber in $X$ and Cop 2 shadows him in $Y$.   When the next cop makes a capture in either $X$ or $Y$, we move into Phase 2:  if  Cop 3 captures the robber in $X$, we now have Cop 1 and Cop 3 on the same $X$ coordinate as the robber and these two cops can play to win in $Y$ while continuing to shadow in $X$.  When one of them achieves a capture in $Y$, it results in a win in $X \times Y$.  

   If there are only two cops,   the robber can choose  one to avoid in $X$  while avoiding the other in $Y$.     Since one cop is insufficient to capture the robber in a cycle graph, the robber can evade these two cops in $X \times Y$ indefinitely.
\end{example}

Since Sneaky-Active Cops and Robbers is equivalent  to the classic Cops and Robbers for reflexive graphs (Proposition \ref{P:reflexiveclassic}), reflexive graphs are not bipartite and  the categorical and strong products are equivalent for reflexive graphs, we recover the following result for the classic game.

\begin{corollary}
    For the classic cops and robbers game, $c(X\boxtimes Y)=c(X)+c(Y)-1$.
\end{corollary}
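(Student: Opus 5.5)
The plan is to deduce the corollary directly from Theorem \ref{T:notbipartite} and Proposition \ref{P:reflexiveclassic}, with $X$ and $Y$ taken to be finite reflexive graphs as the surrounding discussion intends. In the classic game loops are irrelevant to the moves, so we may assume every vertex of $X$ and $Y$ carries a loop without changing $c(X)$ or $c(Y)$.

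\textbf{Identify the strong product with the categorical product.} For reflexive $X,Y$ we have $(x_1,y_1)\sim(x_2,y_2)$ in $X\times Y$ exactly when $x_1\sim x_2$ (allowing $x_1=x_2$ via the loop) and $y_1\sim y_2$ (likewise). This is precisely adjacency in $X\boxtimes Y$ together with a loop at every vertex. So $X\times Y$ is the reflexive version of $X\boxtimes Y$, and in particular $X\times Y$ is itself reflexive.

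\textbf{Check the hypotheses of Theorem \ref{T:notbipartite}.} A reflexive graph has odd cycles (loops), so $X$ and $Y$ are non-bipartite. I will assume $X$ and $Y$ are connected, since the classic formula is stated for connected graphs. For disconnected graphs the formula fails because cop numbers add over components (Observation \ref{O:notconnected}), so connectivity should be made explicit in the hypothesis.

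\textbf{Compute.} Applying Proposition \ref{P:reflexiveclassic} three times, together with Theorem \ref{T:notbipartite}, gives
\[
c(X\boxtimes Y)=c(X\times Y)=c_{SA}(X\times Y)=c_{SA}(X)+c_{SA}(Y)-1=c(X)+c(Y)-1.
\]

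The only real obstacle is bookkeeping. One must confirm that the classic game on $X\boxtimes Y$ is unaffected by adding loops, which holds because staying put is always allowed in the classic game. One must also state connectivity explicitly, since the corollary as written omits it.
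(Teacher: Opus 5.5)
Your proposal is correct and is essentially the paper's own argument. Reflexive graphs are non-bipartite, and the categorical product of reflexive graphs is the reflexive strong product, so Proposition \ref{P:reflexiveclassic} together with Theorem \ref{T:notbipartite} yields the formula. Your explicit connectivity hypothesis is a necessary tightening that the paper's statement omits, since Theorem \ref{T:notbipartite} assumes connectedness and, e.g., $X=Y=K_2\sqcup K_2$ gives $c(X\boxtimes Y)=c(4K_4)=4\neq 3$.
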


When one of the graphs $X$ or $Y$ is bipartite, we must split the cops into two groups, one for each partite set.   Otherwise the strategy unfolds in a similar way to the non-bipartite case.

\begin{theorem}\label{T:onebipartite}
    Suppose $X$ is not bipartite and $Y$ is bipartite.     Then $c_{SA}(X \times Y) = 2c_{SA}(X) + c_{SA}(Y)-2 $.

\end{theorem}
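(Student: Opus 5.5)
The plan is to reduce to the argument of Theorem \ref{T:notbipartite}, run separately on each partite class of $X\times Y$. Write $c_{SA}(Y)=2k$, where by Lemma \ref{L:bipartite} $k$ is the minimal number of cops, all placed in one partite set of $Y$, that beat a robber whose $Y$-coordinate lies in the opposite partite set. First I record the structure of the product. Since $X$ is connected and non-bipartite and $Y$ is connected and bipartite, $X\times Y$ is connected and bipartite, with partite sets $V(X)\times A$ and $V(X)\times B$, where $A,B$ are the parts of $Y$. By Lemma \ref{L:bipartite} (or directly by the parity argument of Observation \ref{O:notbipartite}), a cop can only ever capture if her $Y$-coordinate starts in the partite set opposite to the robber's. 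So it suffices to show that the minimal number of cops in one partite set of $X\times Y$ that wins against a robber starting in the other set is $m = c_{SA}(X)+k-1$. The total is then $2m = 2c_{SA}(X)+2k-2 = 2c_{SA}(X)+c_{SA}(Y)-2$.

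For the upper bound, place $m$ cops with $Y$-coordinates in $A$ and suppose the robber starts in $B$ (the other case is symmetric, using the other $m$ cops). I then replay Phase 1 and Phase 2 of Theorem \ref{T:notbipartite} verbatim. In $X$, every one of the $m$ cops is usable regardless of start by Observation \ref{O:notbipartite}. In $Y$, the cops are in the correct partite set relative to the robber, so any $k$ of them have a winning strategy from any starting positions in $A$, by the "within a partite set the start does not matter" part of Observation \ref{O:notbipartite}. Team reassignment works the same way. A cop achieving a capture in $Y$ joins Team $X$ and shadows the robber in $Y$; shadowing is possible because after a cop move she sits exactly on the robber's $Y$-vertex. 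Symmetrically, a cop capturing in $X$ joins Team $Y$. The counting then goes through unchanged: either Team $X$ reaches $c_{SA}(X)$ cops, or Team $Y$ reaches $k$ cops, and that team finishes the game.

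For the lower bound, suppose there are at most $2m-2$ cops in total. Then some partite set holds at most $m-1 = (c_{SA}(X)-1)+(k-1)$ cops. The robber starts in the opposite part so that only those cops matter, which disregards the rest by Observation \ref{O:notbipartite}. He splits them into $c_{SA}(X)-1$ cops to evade in $X$ and $k-1$ cops to evade in $Y$, and plays both evasion strategies coordinatewise as in Theorem \ref{T:notbipartite}. The evasion in $Y$ is available because $k-1$ cops in one partite set cannot win against a robber in the other set, by the minimality of $k$.

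The main obstacle is making the parity bookkeeping in $Y$ precise. I need that the evasion and winning strategies in $Y$ are really the "one-partite-set" versions. I also need that the definition of $k$ via Lemma \ref{L:bipartite} matches: a robber starting in the same part as the cops in $Y$ is effectively on the opposite part after one move, so he must choose his start correctly. Finally, I need that shadowing cops in $Y$ never leave the correct parity class. I would handle this by tracking the parity invariant $d_Y(\text{cop},R)$ odd after each robber move, throughout both phases.
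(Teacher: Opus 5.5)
Your proposal is correct and is essentially the paper's proof. You place $m=c_{SA}(X)+c_{SA}(Y)/2-1$ cops in each partite set of the bipartite product and run the two-phase team-reassignment strategy of Theorem \ref{T:notbipartite} with the $m$ relevant cops; for the lower bound the robber starts opposite a partite set holding at most $m-1$ cops and splits those between evasion in $X$ and evasion in $Y$. The only slip is that equality requires ruling out $2m-1$ cops, not $2m-2$, but the same pigeonhole handles this (any $2m-1$ cops leave some partite set with at most $m-1$), and your reduction via Lemma \ref{L:bipartite} only needs that $m-1$ relevant cops lose.
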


\begin{proof}
    Since $Y$ is bipartite, we know by Lemma \ref{L:bipartite} that in order to win in $Y$, the cop player needs to start with  $c_{SA}(Y)/2$ cops placed in each partite set.   Now $X \times Y$ is  bipartite, so  with $2c_{SA}(X) + c_{SA}(Y)-2$  the cop player can choose a starting placement with  $c_{SA}(X) + c_{SA}(Y)/2 -1 $ cops in each partite set.    Once the robber chooses a start in one partite set  then  half of the cops are irrelevant by Observation \ref{O:notbipartite}, and we have $c_{SA}(X) + c_{SA}(Y)/2-1$ cops who are placed in the correct partite set and can potentially capture the robber.   These cops have a winning strategy similar to the non-bipartite case.   
    
   Phase 1 is to have the relevant cops play winning strategies in $X$ and $Y$, moving them into Team $X$ and Team $Y$ as they reach a position with a matching $y$ or $x$ coordinate, and then having Team $X$ cops shadow in $Y$ while continuing to play a winning strategy in $X$, while Team $Y$ cops shadow in $X$ and play in $Y$.   This continues until the cops capture the robber or have too few cops to have a winning strategy, which only happens when we have reached  $c_{SA}(X)$ cops in Team $X$ or $c_{SA}(Y)/2$ cops in Team $Y$.   At this stage we enter Phase 2 and either Team $X$ cops can win in $X$ while shadowing in $Y$, or $c_{SA}(Y)/2$ cops of Team $Y$ (who are positioned in the relevant partite set) can win in $Y$ while shadowing in $X$.     

   To show that $2c_{SA}(X) + c_{SA}(Y)-3 $ are not sufficient,  observe that since  $X \times Y$ is bipartite, once the robber chooses a start position, then all cops
who are currently in  the same partite set as the robber's initial position are irrelevant.    Have the robber start on whichever partite set has more cops, hence fewer cops who can potentially make a capture after moving.     Then there are at most $c_{SA}(X) -1 + c_{SA}(Y)/2 -1$ cops in play, and the robber can divide them into ones to be avoided in $X$ and ones to be avoided in $Y$ as in the non-bipartite case. 
\end{proof}

\begin{example}\label{E:crossonebipartite}
 Consider  cycle graphs $C_i, C_j$ for $i$  is odd, $i \geq 3$  and $j$ even, $j\geq 6$.     Then  $c_{SA}(C_i \times C_j) =  6 $.  In order for $6$ cops to win, we place $3$ of them on each partite set of $V(C_i \times C_j)$.   Once the robber places himself, we have $3$ cops who are positioned to make a potential capture. 
  As in Example \ref{E:nobipartite}, the cops play in both $C_i$ and $C_j$ until one captures in $C_i$;   then this cop shadows in $C_i$ while the other cops continue to play, and all three play in $C_j$.   Eventually we have two cops who have captured the robber in one of the cycles, say  $C_i$, who then  play to win in $C_j$ while remaining shadowing the robber in $C_i$.  
  
  With at most $5$ cops, the robber places themselves in the partite set which contains only two cops, and proceeds to evade one in $C_i$ and the other in $C_j$.  \end{example}
Lastly, we consider the following case.  
\begin{theorem}\label{T:bothbipartite}
    Suppose $X, Y$ are both bipartite.      Then $c_{SA}(X \times Y) = 2c_{SA}(X) + 2c_{SA}(Y)-4 $.

\end{theorem}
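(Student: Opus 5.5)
Following the template of Theorems \ref{T:notbipartite} and \ref{T:onebipartite}, the plan is to understand the parity structure of $X\times Y$ when both factors are bipartite. Take $X$ with partite sets $X_0,X_1$ and $Y$ with partite sets $Y_0,Y_1$. Then $X\times Y$ splits into four classes $X_a\times Y_b$. Every move changes both parities, so the pair of parity differences between a cop and the robber is preserved up to simultaneous flipping. A cop can make a capture only if, after her move, she shares the robber's coordinates in both factors. By the argument of Observation \ref{O:notbipartite}, applied coordinatewise, this requires the cop to be in the "correct" class relative to the robber in each factor separately. Once the robber places himself, a cop counts as \emph{$X$-relevant} if her $X$-parity differs from the robber's, and similarly \emph{$Y$-relevant}. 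Only cops relevant in both factors can ever capture.

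For the upper bound, we distribute $2c_{SA}(X)+2c_{SA}(Y)-4$ cops so that each of the two possible "fully relevant" classes contains $c_{SA}(X)/2+c_{SA}(Y)/2-1$ cops. Concretely, the robber's start class determines which class of cops is fully relevant. A cop in $X_a\times Y_b$ is fully relevant exactly for a robber in $X_{1-a}\times Y_{1-b}$. So we place $m=c_{SA}(X)/2+c_{SA}(Y)/2-1$ cops in each of the four classes, for a total of $4m=2c_{SA}(X)+2c_{SA}(Y)-4$. Whichever class the robber chooses, exactly $m$ cops are fully relevant. By Lemma \ref{L:bipartite}, $c_{SA}(X)/2$ correctly placed cops win in $X$ (all on the correct side), and likewise $c_{SA}(Y)/2$ win in $Y$; within a partite set the starting positions are irrelevant by Observation \ref{O:notbipartite}. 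We then run the two-phase team strategy from Theorem \ref{T:notbipartite}, with thresholds $c_{SA}(X)/2$ and $c_{SA}(Y)/2$. Cops that "capture" in one coordinate join the opposite team and shadow there. Phase 2 is reached when the unassigned-plus-Team-$X$ count drops to $c_{SA}(X)/2-1$, forcing Team $Y$ to have $c_{SA}(Y)/2$ shadowing cops who then win in $Y$, or symmetrically. The counting works out exactly as before, with $m=(c_{SA}(X)/2)+(c_{SA}(Y)/2)-1$.

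For the lower bound, take at most $4m-1$ cops. Some class then has at most $m-1$ cops, and the robber starts in the class opposite it in both coordinates, so at most $m-1=(c_{SA}(X)/2-1)+(c_{SA}(Y)/2-1)$ cops are fully relevant. Cops that are not fully relevant are ignored: if a cop's $X$-parity matches the robber's, she can never share his $X$-coordinate right after a cop move. The robber splits the relevant cops into $c_{SA}(X)/2-1$ to evade in $X$ and $c_{SA}(Y)/2-1$ to evade in $Y$. By Lemma \ref{L:bipartite} and minimality, $c_{SA}(X)/2-1$ cops in a single partite set cannot win in $X$, so the robber has a coordinatewise evasion strategy. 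Combining the two strategies gives a legal evasion in $X\times Y$.

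The main obstacle is the careful parity bookkeeping. We must check that "relevant in both coordinates" is exactly the right notion. We must also check that Lemma \ref{L:bipartite}'s half-count really is the minimum for cops confined to one partite set, since the lower bound needs exactly that strength. In particular, this requires that evasion against $c_{SA}(X)/2-1$ correctly-placed cops holds regardless of where they start in that set, which Observation \ref{O:notbipartite} supplies.
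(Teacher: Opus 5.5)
Your proposal is correct and is essentially the paper's proof. The four classes $X_a\times Y_b$ are the two partite sets of each of the two components of $X\times Y$. You place $c_{SA}(X)/2+c_{SA}(Y)/2-1$ cops in each, run the two-phase team strategy of Theorems \ref{T:notbipartite} and \ref{T:onebipartite} with thresholds $c_{SA}(X)/2$ and $c_{SA}(Y)/2$, and get the lower bound from pigeonhole on the classes plus coordinatewise evasion.

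Your per-class count is the one consistent with the stated total; the paper's text says $c_{SA}(X)+c_{SA}(Y)-1$ per partite set, evidently a slip. Like the paper, you are implicitly assuming $X$ and $Y$ are connected, as Lemma \ref{L:bipartite} requires.
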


\begin{proof} In this case, $X \times Y$ is a disconnected graph with two components, each of which is bipartite.   Thus we have $4$ partite sets to consider, two for each component of the graph.     We place $c_{SA}(X) + c_{SA}(Y) -1$ cops on each of these partite sets.   Once the robber has chosen a start position, only one out of these four groups of cops  remains in play, and those cops have a winning strategy with the two phase strategy outlined in the proofs of Theorems \ref{T:notbipartite}, \ref{T:onebipartite}. 

   To see that $2c_{SA}(X) + 2c_{SA}(Y)-5 $ cops are insufficient, we note that the robber can always choose an initial placement in the component and partite set which has at most $c_{SA}(X)/2-1 + c_{SA}(Y)/2-1$  cops, and avoid $c_{SA}(X)/2-1$ of them in $X$ and the others in $Y$.  
\end{proof}

  \begin{example}  \label{E:crossbothbipartite}  If $i, j$ are both even and  $i, j\geq 6$, then $c_{SA}(C_i \times C_j ) = 12$.   The graph $C_i \times C_j$ consists of two disconnected bipartite components, and we place 3 cops on each partite set of each component.  With fewer cops, the robber can select the partite set of the component that contains at most $2$ cops and avoid one in each coordinate.

\end{example}

\section{The Box Product}\label{S:BP} 

In this section, we turn to the box product. We establish bounds on the sneaky-active cop number for box products of graphs, and compute some examples.

\begin{definition}\label{D:bprod}  For graphs $X$ and $Y$, the  {\bf box product graph} $X \square Y$ is defined by: \begin{itemize}
\item  A vertex is a  pair $(v, w)$ where $v \in V(X)$ and $w \in V(Y)$.
\item An edge is defined by $(v_1, w_1) \con (v_2, w_2) \in E(X \times Y) $ whenever either \begin{itemize}
    \item $v_1 = v_2 \in V(X)$ and $w_1 \con w_2 \in E(Y)$ or 
    \item $v_1 \con v_2 \in E(X)$ and $w_1 = w_2 \in V(Y)$
\end{itemize} \end{itemize}
\end{definition}
Hence  a move in the box product $X \square Y$ will correspond to a move in $X$ {\bf or} a move in $Y$, but not both; and at every step, each cop or robber will choose which direction to move in, either $X$ or $Y$.   

We begin with the case where both graphs are not bipartite.

\begin{theorem}\label{T:notbipartiteboxbound}
    If $X$ and $Y$ are \textit{not} bipartite graphs then $c_{SA}(X \square Y) \leq c_{SA}(X) + c_{SA}(Y)$. 
    
\end{theorem}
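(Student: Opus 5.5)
Split the $c_{SA}(X)+c_{SA}(Y)$ cops into a \emph{shadow team} of $c_{SA}(Y)$ cops, which works to match the robber's $Y$-coordinate, and a \emph{capture team} of $c_{SA}(X)$ cops, which plays the winning strategy in $X$. The projection of the robber's walk in $X\square Y$ onto $X$ is a walk in $X$ in which the robber sometimes "pauses" (when he moves in the $Y$ direction). So we first need to handle the pauses. The key point is that since $X$ and $Y$ are not bipartite, they contain odd cycles, and we will use these (via Observation \ref{O:notbipartite}) to absorb parity and timing discrepancies.

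The plan proceeds as follows. First, each cop in $X\square Y$ must move in exactly one coordinate per turn. A cop who wants to "stay put" in its $X$-coordinate while acting in $Y$ does so by moving in $Y$. One who wants to stay put in both coordinates instead moves back and forth along an edge in one coordinate, paying a parity cost that is repaired later using an odd cycle.

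Second, define a reduced game on $Y$. Consider only those robber turns in which he moves in $Y$. Between such turns his $Y$-coordinate is constant, so the $Y$-projection of his walk is a legitimate sneaky-active walk in $Y$ once the stationary stretches are deleted. The shadow team plays the winning strategy of $Y$ against this walk, responding to a robber $Y$-move with a $Y$-move. When the robber moves in $X$, shadow cops instead move in $X$ to track him. Odd walk lengths in $Y$ are fixed by Observation \ref{O:notbipartite}: any configuration can be re-reached after an even number of moves, which lets the cops resynchronize. Eventually some shadow cop lands on the robber's $Y$-coordinate. From then on that cop copies every $Y$-move of the robber and spends his $X$-moves following the robber in $X$.

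Third, the capture team treats robber $X$-moves as moves in $X$ and robber $Y$-moves as pauses. Symmetrically to the shadow team, they play the $X$ strategy against the $X$-projection. Whenever the projected robber is caught in $X$, either the relevant capture cop also shares the $Y$-coordinate, which is a win, or the roles swap, with that cop now shadowing in $X$ while the others chase in $Y$. This mirrors the Team $X$ / Team $Y$ reassignment of Theorem \ref{T:notbipartite}. The count $c_{SA}(X)+c_{SA}(Y)$ is one more than in that theorem, and the extra cop pays for the slack created by the pauses. With this many cops, one team always retains a full winning strategy until capture.

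The main obstacle is the pause problem. In the reduced game on $X$, the robber may stay put in $X$ at an unlooped vertex, which is not a legal sneaky-active move, so the winning strategy of $X$ does not directly apply. I would first try to model a pause as the robber stepping to a neighbor and back, using a parity-shift trick with the odd cycle. If that fails, I would argue instead that a robber who pauses in $X$ infinitely often is moving in $Y$ infinitely often, and then the shadow team in $Y$ makes progress. Either way the cops force capture, with a potential-style argument that each team's strategy advances on its own coordinate's moves.
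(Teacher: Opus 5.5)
There is a genuine gap. The step you call the main obstacle is left open (``I would first try \dots\ If that fails \dots''), and the count that is supposed to finish the argument (``one team always retains a full winning strategy'') is asserted rather than derived.

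In fact the pause problem dissolves with a device you already use on the $Y$ side: on each turn, every cop moves in the factor in which the robber just moved. Then whenever the robber pauses in $X$, every cop pauses in $X$ too. So the $X$-projection is a legal sneaky-active game on $X$, with robber $X$-moves alternating with cop $X$-moves, and likewise for $Y$. No cop ever needs to stand still in both coordinates, and your back-and-forth moves would only insert extra cop moves into a projected game. Non-bipartiteness enters only through Observation \ref{O:notbipartite}, to restart a factor strategy from arbitrary positions after a reassignment.

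What makes $c_{SA}(X)+c_{SA}(Y)$ cops suffice is that an \emph{unassigned} cop takes part in both factor games at once, since it always answers in whichever factor the robber used. This gives the invariant you never state: while Team $Y$ (cops matched in $X$) has fewer than $c_{SA}(Y)$ members, Team $X$ together with the unassigned cops numbers at least $c_{SA}(X)$, and symmetrically. Your fixed split into a capture team chasing in $X$ and a shadow team chasing in $Y$ does not provide this. As described, once the first capture-team cop lands on the robber's $X$-coordinate and swaps roles, only $c_{SA}(X)-1$ cops are left chasing in $X$, and nothing in your plan restores the count.

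The more serious omission is a robber who eventually moves in only one factor, say only in $Y$. Then the $X$-game never advances. Under direction matching, a cop that has matched the robber's $Y$-coordinate gets no $X$-moves at all, so ``spends his $X$-moves following the robber in $X$'' achieves nothing. Your fallback that ``the shadow team in $Y$ makes progress'' only produces $Y$-matches, not captures.

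The paper handles this as follows. Phase 1 ends when one team is full, say Team $X$ has $c_{SA}(X)$ cops matched in $Y$. Then the robber can make only finitely many further $X$-moves, so his $X$-coordinate eventually freezes. The unassigned cops deliberately abandon direction matching and move \emph{only} in $X$ until they reach that frozen coordinate and join Team $Y$. Only after that does Team $Y$, now $c_{SA}(Y)$ cops all matched in $X$, play its winning strategy in $Y$.

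This endgame is exactly where the cop beyond the count of Theorem \ref{T:notbipartite} is spent. Without it, nothing in your plan forces a capture against a robber who starves one factor.
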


\begin{proof}

Suppose we have $c_{SA}(X)+ c_{SA}(Y)$ cops.     The cops have a winning strategy as follows. 

Phase 1: During this phase, the cops always move in the same direction as the robber does.   So if the robber makes a move in $X$, then all cops move in $X$, and similarly for $Y$.  To begin, all cops are unassigned and  play together as a team following winning strategies for $X$ and $Y$ respectively.    When a cop 'captures' the robber in $X$ by moving onto a vertex with the same $X$ coordinate as the robber, the cop is moved onto Team $Y$.  From then on, they shadow the robber in $X$ and continue playing with the Team $X$ and unassigned cops in $Y$.  Similarly cops who 'capture' the robber in $Y$ are moved onto Team $X$. 

The cops continue this strategy, reassigning cops as they 'capture' in one coordinate, until we reach $c_{SA}(X)$ cops in Team $X$  or $c_{SA}(Y)$ cops assigned to Team $Y$.  At this point, the strategy changes.

Phase 2:  The Team $X$ cops continue playing a winning strategy in $X$ and shadowing in $Y$, and a capture in $X$ results in a cop win in $X \square Y$.   Hence the robber can only make a finite number of moves in $X$ from this point on.   The Team $Y$ cops shadow the robber in $X$ and move randomly in $Y$ when the robber moves in $Y$.   The unassigned cops, however, move ONLY in $X$ during this phase: no matter what the robber does, they move towards the robber's $X$ position in $X$ until they match it and join Team $Y$.

Phase 3:  Once all remaining unassigned cops have joined Team $Y$, we now have  $c_{SA}(X)$ cops in Team $X$ whose $Y$-coordinate matches the robber, and $c_{SA}(Y)$ cops in Team $Y$ whose $X$-coordinate matches the robber.  At this point the cops revert to the strategy of the first phase,  where all cops move in the same coordinate as the robber does and  Team $X$ plays to win in $X$ and shadows the robber in $Y$, and Team $Y$ plays to win in $Y$ and shadows in $X$.   Since each team has a winning strategy in their respective factor, the robber is captured in a finite number of turns.  

\end{proof}

When both graphs are bipartite, then the strategy of Theorem \ref{T:notbipartiteboxbound} still works provided we are careful about our initial placement of our cops.  

\begin{theorem}\label{T:bipartiteboxbound}
    If $X$ and $Y$ are \textit{both} bipartite graphs then $c_{SA}(X \square Y) \leq c_{SA}(X) + c_{SA}(Y)$. 
    
\end{theorem}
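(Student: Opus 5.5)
The plan is to reuse the three-phase strategy from the proof of Theorem \ref{T:notbipartiteboxbound}, changing only the initial placement and adding a parity bookkeeping argument. That proof used Observation \ref{O:notbipartite} to start cops anywhere. In the bipartite case, Lemma \ref{L:bipartite} and the second half of Observation \ref{O:notbipartite} instead say that a winning strategy in $X$ needs $c_{SA}(X)/2$ cops in the correct partite class relative to the robber, and similarly in $Y$. We may assume $X$ and $Y$ are connected; otherwise we argue componentwise using Observation \ref{O:notconnected}.

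The first step is the parity bookkeeping. Write $X=A_0\sqcup A_1$ and $Y=B_0\sqcup B_1$. Every move in $X\square Y$ changes the parity class of exactly one coordinate. In Phases 1 and 3 of the earlier strategy, every cop moves in the same factor as the robber. Hence the parity relation between a cop's $X$-coordinate and the robber's $X$-coordinate changes exactly as it would in a game played on $X$ alone. The $X$-factor sees alternating robber and cop moves, and moves made in $Y$ leave the $X$-parities untouched. So, viewed in $X$, a cop is ``relevant'' (able to make an $X$-capture) exactly when the $X$-factor game would consider her relevant, and that status never changes during these phases. The same holds in $Y$.

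The second step is the placement. Place $c_{SA}(X)/2$ cops in each of the two partite classes of $X$ and $c_{SA}(Y)/2$ cops in each partite class of $Y$, using $c_{SA}(X)+c_{SA}(Y)$ cops in total. The intended arrangement is $c_{SA}(X)/2$ cops on $A_0\times B_0$, $c_{SA}(X)/2$ on $A_1\times B_1$, $c_{SA}(Y)/2$ on $A_0\times B_1$ and $c_{SA}(Y)/2$ on $A_1\times B_0$ (or a similar arrangement). Whatever the robber's start $(a,b)$, I would then check that at least $c_{SA}(X)/2$ cops have relevant $X$-parity and at least $c_{SA}(Y)/2$ have relevant $Y$-parity. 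Those are exactly the counts that Lemma \ref{L:bipartite} says suffice for the factor strategies. The Team $X$ / Team $Y$ thresholds of Phase 1 then become $c_{SA}(X)/2$ and $c_{SA}(Y)/2$ among the relevant cops.

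The third step runs Phases 1–3 as before. In Phase 2, the unassigned cops move only in $X$ toward the robber's $X$-coordinate. This breaks the lockstep of the first step, so I must check that this chase succeeds and that the parities are correct again when Phase 3 begins. My approach is to note that while the robber moves in $Y$ a chasing cop gains ground, and the robber can make only finitely many $X$-moves, since Team $X$ is already winning in $X$. The chase terminates, and I would track the parity of the number of $Y$-moves to decide which cops actually reach a matching $X$-coordinate.

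I expect the main obstacle to be this parity issue: ensuring that the chosen placement leaves enough relevant cops in each factor for every possible robber start, and that Phase 2 does not destroy relevance. If the four-block placement fails for some start, the fallback is to treat the two partite classes of the connected bipartite graph $X\square Y$ separately, as in the proof of Theorem \ref{T:onebipartite}. One would then verify that the half of the cops in the robber's opposite class already carry the needed relevant parities in both factors.
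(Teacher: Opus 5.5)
Your overall plan (reuse the three phases of Theorem~\ref{T:notbipartiteboxbound}, with the lockstep parity observation) is the right one. However, the placement step, which is the only genuinely new content in the bipartite case, does not work as proposed.

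Write $p=c_{SA}(X)/2$ and $q=c_{SA}(Y)/2$. The partite classes of $X\square Y$ are $P_0=(A_0\times B_0)\cup(A_1\times B_1)$ and $P_1=(A_0\times B_1)\cup(A_1\times B_0)$. Every move, in either coordinate, switches a piece's class, so by Observation~\ref{O:notbipartite} applied to $X\square Y$, only cops in the class opposite the robber's can ever capture.

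Your four-block arrangement puts $2p$ cops in $P_0$ and $2q$ in $P_1$. The robber starts in the fuller class and faces only $\min(2p,2q)$ potentially useful cops. When $p\neq q$ this is fewer than the $p+q$ your Phase~3 requires. For example, with $X=K_2$ and $Y=C_6$ there are two useful cops, while Phase~3 needs one on Team $X$ and two on Team $Y$.

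Your proposed check (at least $p$ cops with relevant $X$-parity and at least $q$ with relevant $Y$-parity) passes for this arrangement, which shows it is the wrong criterion. A cop in the robber's own product class has, after her first move, odd distance to the robber in exactly one coordinate, and lockstep preserves this. She can match the robber in the other coordinate, but she then sits at odd distance in the very coordinate her new team must win in, so she never counts toward $p$ or $q$. What is needed is $p+q$ cops relevant in \emph{both} factors simultaneously, which is the same as $p+q$ cops in the product class opposite the robber.

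The missing idea, which is essentially the paper's entire proof, is to track the partite class of $X\square Y$ instead of the factor classes. Place $p+q$ cops in each class of $X\square Y$, in any positions. The cops opposite the robber stay opposite no matter which coordinate anyone moves in. Whenever such a cop matches the robber's coordinate in one factor right after her own move, her distance in the other factor is automatically even, so she lands in the correct partite set of that factor.

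This also dissolves your Phase~2 worry. The unassigned cops chasing only in $X$ remain in the correct product class, so each one joins Team $Y$ already correctly placed in $Y$, and no counting of $Y$-moves is needed.

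Your fallback heads this way, but its claim that these cops ``already carry the needed parities in both factors'' needs one more ingredient. Each must make her first move in the coordinate where her distance to the robber is odd, as in Phase~0 of Theorem~\ref{T:halfbipartiteboxbound}. With an arbitrary first move, lockstep can leave her at odd distance in both coordinates throughout Phase~1.

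Finally, the reduction to connected factors via Observation~\ref{O:notconnected} fails, because $X\square Y$ has one component for each \emph{pair} of components. If $X=Y$ is the disjoint union of three copies of $K_2$, then $X\square Y$ is nine copies of $C_4$ and $c_{SA}(X\square Y)=18>12$. Connectedness must simply be assumed, as the paper does implicitly through Lemma~\ref{L:bipartite}.
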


\begin{proof}
    In this case, we know by Lemma \ref{L:bipartite} that the winning strategy in both $X$ and $Y$ consist of placing the cops as two equal groups of size $c_{SA}(X)/2$ and $c_{SA}(Y)/2$ respectively in each partite set of  $V(X)$ and $V(Y)$.   Now  $X \square Y$ is also bipartite, so with $c_{SA}(X) + c_{SA}(Y)$ cops  we can place $c_{SA}(X)/2 + c_{SA}(Y)/2$  cops in each partite set of $X \square Y$.     Once the robber chooses their initial position only the cops in the opposite partite set are relevant.  We enact the strategy of the proof of Theorem \ref{T:notbipartiteboxbound} using the relevant cops.  Note that as the cops join Team $X$ and $Y$, they will automatically wind up in the relevant partite set of $V(X)$ or $V(Y)$ when they match the other coordinate.    
\end{proof}

Interestingly, when $X$ is bipartite but $Y$ is not, we can improve the bound.     In this case $X\square Y$ is not bipartite. We may still mimic the arguments made in the proofs of Theorems \ref{T:notbipartiteboxbound}, \ref{T:bipartiteboxbound}, using only the $c_{SA}(X)/2$ to capture the robber in $X$ and using the non-bipartiteness of $Y$ to maneuver them into the right partite set in $X$.  

\begin{theorem}\label{T:halfbipartiteboxbound}
    If $X$ is bipartite and $Y$ is not, then $c_{SA}(X \square Y) \leq {c_{SA}(X)}/{2} + c_{SA}(Y)$. 
    
\end{theorem}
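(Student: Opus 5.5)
We mimic the three-phase strategy from the proof of Theorem \ref{T:notbipartiteboxbound}. The only new ingredient is an initial phase that puts the $X$-team into the correct partite class of $X$ relative to the robber.

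Let $k = c_{SA}(X)/2$. By Lemma \ref{L:bipartite}, $k$ cops placed in one partite set of $X$ have a winning strategy in $X$ whenever the robber occupies the other partite set. Since $Y$ is connected and not bipartite, $X \square Y$ is connected and not bipartite. By Observation \ref{O:notbipartite}, the initial placement of the $k + c_{SA}(Y)$ cops is therefore irrelevant: we may assume they start anywhere and reach any prescribed configuration.

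\textbf{Phase 0 (parity adjustment).} We need each of the $k$ cops playing for the $X$-win to have an $X$-coordinate in the partite set of $X$ opposite to the robber's $X$-coordinate, measured after each cop turn. The robber's $X$-parity changes only when the robber moves in $X$. A cop can flip her own $X$-parity relative to the robber by spending one step in $Y$ while the robber moves in $X$, or vice versa. Since $Y$ has an odd cycle, a cop can also wander in $Y$ and return to any prescribed $Y$-position with either parity of walk length. So at the start, each cop whose $X$-parity is wrong makes a single move in the coordinate opposite to the robber's move, and this fixes her parity. From then on, every cop moves in the same coordinate as the robber, exactly as in Phase 1 of Theorem \ref{T:notbipartiteboxbound}, and the relative $X$-parity stays correct. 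The cops playing in $Y$ need no parity care, because $Y$ is not bipartite: Observation \ref{O:notbipartite} lets them resume a winning $Y$-strategy from any position.

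\textbf{Phases 1--3.} We then run the team-reassignment argument from the proof of Theorem \ref{T:notbipartiteboxbound}, with the threshold for Team $X$ lowered to $k$ cops. That threshold suffices because every cop keeps the correct relative $X$-parity.
\begin{itemize}
\item In Phase 2, unassigned cops chase the robber's $X$-coordinate while moving only in $X$, and the robber's $X$-moves are bounded by the Team $X$ strategy. The chasing cop must land exactly on the robber's $X$-coordinate. If the parity is wrong, she inserts one $Y$-move, which is allowed since in Phase 2 she may move in either coordinate.
\item Once Team $Y$ has $c_{SA}(Y)$ cops shadowing in $X$, they win in $Y$.
\item Once Team $X$ has $k$ correctly-parity cops shadowing in $Y$, they win in $X$, using Lemma \ref{L:bipartite}.
\end{itemize}

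\textbf{Main obstacle.} The hardest step is keeping the parity invariant once a Team $X$ cop is shadowing in $Y$. Such a cop must match the robber's $Y$-coordinate exactly while also holding the correct $X$-parity. Each robber move forces her to move in the same coordinate, which preserves both properties, so the invariant should hold. The delicate case is the moment she joins Team $X$: at that point her $X$-parity must already be correct. I would enforce this by doing the parity fix in Phase 0 before any captures occur, and by checking that moving in the same coordinate as the robber preserves relative parity throughout.
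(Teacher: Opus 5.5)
Your overall architecture matches the paper's. The Phase~0 parity fix chooses each cop's first move (in $X$ or in $Y$) according to her partite class in $X$. The rule that every cop then moves in the same coordinate as the robber preserves relative $X$-parity. The team reassignment then runs with the Team $X$ threshold lowered to $k=c_{SA}(X)/2$.

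A small correction first. An $X$-team cop must be in the \emph{same} partite class of $X$ as the robber after each cop turn, equivalently the opposite class after each robber turn. An $X$-capture means landing on the robber's $X$-coordinate on a cop turn, so with the invariant as you stated it an $X$-capture could never occur. Also, on the very first cop turn the robber has not yet moved, so ``the coordinate opposite to the robber's move'' is undefined there. Use the paper's rule instead: cops in the correct class move in $X$, cops in the wrong class move in $Y$.

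The genuine gap is in Phase~2. You only treat the case where Team $X$ reaches $k$ cops first, so that unassigned cops chase in $X$ while the robber's $X$-moves are bounded. In Theorem~\ref{T:notbipartiteboxbound} the other case is symmetric, but here it is not.

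Suppose Team $Y$ fills up first. Your bullet ``Once Team $Y$ has $c_{SA}(Y)$ cops shadowing in $X$, they win in $Y$'' is false on its own: the robber simply stops moving in $Y$ and walks in $X$ forever. To win, the remaining unassigned cops must join Team $X$. That requires them to move in $Y$ toward the robber's $Y$-coordinate while the robber moves in $X$. Each such turn flips their $X$-parity relative to the robber, so your Phase~0 invariant is destroyed.

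This contradicts your ``Main obstacle'' paragraph, which proposes to guarantee correct parity at the moment of joining Team $X$ by moving in the same coordinate as the robber. A cop who does that never changes her $Y$-coordinate and so never joins.

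The missing step is the paper's Case~2:
\begin{itemize}
\item Team $Y$'s winning strategy bounds the robber's remaining $Y$-moves, so eventually the robber's $Y$-coordinate is fixed and every robber move is in $X$.
\item A chasing cop who reaches that coordinate with the wrong $X$-parity traverses an odd closed walk in $Y$, which exists because $Y$ is not bipartite.
\item During that walk her relative $X$-parity flips an odd number of times, so she returns to the robber's $Y$-coordinate with the correct parity and joins Team $X$.
\end{itemize}
You already identified the right tool (walks of either parity in $Y$); it has to be deployed here, not replaced by the Phase~0 invariant.
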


\begin{proof}  
 Suppose we have ${c_{SA}(X)}/{2}+ c_{SA}(Y)$ cops. We will follow the same basic strategy as in the earlier cases, but we need to ensure that the cops who are assigned to Team $X$ wind up in the correct partite set of $V(X)$ to play their strategy there once their $Y$ coordinates match the robber.

  Phase 0: We again start with all cops unassigned.  This time, on the first turn once the robber has placed, we look at the $X$-coordinate of each cop.   If the coordinate is in the opposite (the `correct') partite set of $V(X)$ as that of the robber, the cop moves in $X$;  if they are in the `wrong' partite set, the cop makes their initial move in $Y$.   Thus at the end of the turn, all cops have an $X$ coordinate matching that of the robber.

Phase 1:   All cops always move in the same direction as the robber does, either in $X$ or $Y$.   When a cop matches the $X$-coordinate of the robber, they move to Team $Y$ and begin shadowing in $X$, and when they match the $Y$-coordinate they move to Team $X$ and shadow in $Y$.  The Team $X$ and unassigned groups play together in $X$ when the robber moves in $X$, and the Team $Y$ and unassigned groups play together in $Y$ when the robber moves in $Y$.      Note that when a cop 'wins' in $Y$ and is moved into Team $X$, their $X$ coordinate will be in the correct partite set of $V(X)$ because of the Phase 0 move.

  This phase continues until there are either ${c_{SA}(X)}/{2}$ cops in Team $X$ or $c_{SA}(Y)$ cops in Team $Y$;  up to this point, there will always be enough cops playing together in $X$ and in $Y$ to force a win,  so we can keep moving cops from the unassigned group into either Team $X$ or $Y$.

Phase 2:  We need to consider two cases here.  
\begin{itemize}
    \item[Case 1:] There are $c_{SA}(X)/2$ cops in Team $X$ and thus we have $c_{SA}(Y)$ other cops either  in Team $Y$ or unassigned. Because of the first move of Phase $0$ and the fact that all cop moved in $X$ exactly when the robber did during Phase 1, these  $c_{SA}(X)/2$ cops all have $X$ coordinates that are in the relevant partite set of $V(X)$.  The Team $X$ cops continue playing a winning strategy in $X$ and shadowing in $Y$, and a capture in $X$ results in a cop win in $X \square Y$,  and so the robber can only make a finite number of moves in $X$ from this point on without being captured, and is forced to eventually move in $Y$.     The Team $Y$ cops shadow the robber in $X$ and move {arbitrarily} in $Y$ when the robber moves in $Y$.   The unassigned cops move \textbf{only} in $X$ during this phase: no matter what the robber does, they move towards the robber's $X$ position in $X$ until they match it and join Team $Y$.

    \item[Case 2:]  There are $c_{SA}(Y)$ cops in Team $Y$ and the rest either in Team $X$, with matching $Y$ coordinates and in the relevant partite set of $V(X)$, or are unassigned. In this scenario, the Team $Y$ cops have a winning strategy and so the robber is forced to move in $X$ after a finite number of moves.   We  move the unassigned cops into Team $X$ when they have a matching $Y$ coordinate as the robber and are in the relevant partite set of $V(X)$.   These cops therefore move to reduce the distance from the robber in $Y$, regardless of how the robber moves, and after finite moves, match  the robber's $Y$ coordinate.   If they are also in the correct $V(X)$  partite set, they are now assigned to be partite set of Team $X$.   If they are not, they traverse an odd length circuit  in $Y$ and return to the same coordinate, and then continue to move towards the robber's new $Y$ position.

\end{itemize}

Phase 3:  In either case,  we now have $c_{SA}(X)/2$ cops in Team $X$ with matching $Y$-coordinate to the robber and in the correct partite set of $V(X)$, and $c_{SA}(Y)$ cops in Team $Y$ with matching $X$-coordinate to the robber.    So as in the proofs of Theorems \ref{T:notbipartiteboxbound}, \ref{T:bipartiteboxbound}, when the robber moves in $X$, Team $X$ plays their winning strategy in $X$ and $Y$ shadows the robber and vice versa, ensuring a capture.  
\end{proof}

The next examples are graphs in which the bounds of Theorems \ref{T:notbipartiteboxbound}, \ref{T:bipartiteboxbound} and \ref{T:halfbipartiteboxbound} are realized.  

\begin{example}\label{E:tightboxbound1}
    For two non-bipartite graphs as in Theorem \ref{T:notbipartiteboxbound}, consider the graph $K_3^\ell$, a looped $K_3$ where each vertex is adjacent to each vertex including itself;  a single cop can move to any vertex, including the one it is on, and so  $c_{SA}(K_3^\ell)=1$. Then $K_3^\ell \square K_3^\ell$ is depicted below: 

    \[
    \begin{tikzpicture}
    \node at (-2,0){\begin{tikzpicture}
        \draw ({sin(0)},{cos(0)})\foreach \x in {1,...,3}{--({sin(360/3*\x)}, {cos(360/3*\x)})};
        \foreach \x in {1,...,3}{            
            \draw[fill] ({sin(360/3*\x)}, {cos(360/3*\x)})  circle (2pt);
        };
        \draw ( 0.866025403784, -1/2 ) to[in=-220,out=40,loop, distance=.7cm]  ( 0.866025403784,-1/2 ); 
        \draw ( -0.866025403784, -1/2 ) to[in=-220,out=40,loop, distance=.7cm]  ( -0.866025403784,-1/2 );
        \draw ( 0,1 ) to[in=-220,out=40,loop, distance=.7cm]  ( 0,1 );

    \end{tikzpicture}};

    \node at (-2,-2){ $K_3^\ell$};
    
    \node at (2,0){\begin{tikzpicture}
        \foreach \x in {0,...,2}{
        \draw (\x, 0)--(\x, 2);
        \draw (\x,0)edge[bend left](\x,2);
        };
        
        \draw (0,0)--(2,0);
        \draw (0,2)--(2,2);
        \draw (0,1)--(2,1);
        
        \foreach \y in {0,...,2}{
        \draw (0, \y)--(2, \y);
        \draw (0,\y)edge[bend right](2,\y);
        };

        \foreach \x in {0,...,2}{
        \foreach \y in {0,...,2}{
            \draw ( \x,\y ) to[in=-220,out=40,loop, distance=.7cm]  ( \x,\y );
        };
        }'
        
        \draw[fill, black] (0,0) circle (2pt);
        \draw[fill, black] (1,0) circle (2pt);
        \draw[fill, black] (2,0) circle (2pt);
        \draw[fill, black] (0,1) circle (2pt);
        \draw[fill, black] (1,1) circle (2pt);
        \draw[fill, black] (2,1) circle (2pt);
        \draw[fill, black] (0,2) circle (2pt);
        \draw[fill, black] (1,2) circle (2pt);
        \draw[fill, black] (2,2) circle (2pt);

         \end{tikzpicture}};
         
        \node at (2,-2){ $K_3^\ell\square K_3^\ell$}; 
    \end{tikzpicture}
    \]

    By Proposition \ref{P:reflexiveclassic}, playing the sneaky-active game on this reflexive graph is equivalent to playing the classic game on $K_3\square K_3$. One cop is insufficient here, as the robber can always move away from the cop.   Two cops can win, since cops placed at  $(1,1), (2,2)$ can move to any vertex on their first move.  Thus $c_{SA}(K_3^\ell\square K_3^\ell)=2=c_{SA}(K_3^\ell)+c_{SA}(K_3^\ell)$.
\end{example}

\begin{example}\label{E:tightboxbound2}
    For the bipartite case of Theorem \ref{T:bipartiteboxbound}, consider $K_2 \square C_4$ and recall that $c_{SA}(K_2)=c_{SA}(C_4)=2$.  Since $C_4\cong K_2\square K_2$, the box product is isomorphic the hypercube $\displaystyle \square_{i=1}^3 K_2$:

     \[\begin{tikzpicture}[scale=1.4]

\foreach \x in {0,...,1}{
\foreach \y in {0,...,1}{

\draw (\x,\y) --node[above right]{\tiny $(\x, 0, \y)$} (\x, \y);

\draw[fill] (\x,\y) circle (2pt);

}
};
\foreach \x in {0,...,1}{
\draw (\x, 0)--(\x, 1);
};

\draw  (0, 0)--(1, 0);
\draw (0, 1)--(1, 1);

\def\sx{0.5}
\def\sy{0.4}

\foreach \x in {0,...,1}{
\foreach \y in {0,...,1}{

\draw (\x+\sx,\y+\sy) --node[above right]{\tiny $(\x, 1, \y)$} (\x+\sx, \y+\sy);

\draw[fill] (\x+\sx,\y+\sy) circle (2pt);
\draw (\x, \y) -- (\x+\sx, \y+\sy);
}
\draw  (\x, 1) -- (\x+\sx, 1+\sy);
};

\foreach \x in {0,...,1}{
\draw (\x+\sx, 0+\sy)--(\x+\sx, 1+\sy);
};

\draw (0 + \sx,  0+\sy)--(1+\sx, 0+\sy);
\draw (0 + \sx,  1+\sy)--(1+\sx, 1+\sy);

\node at (1,-1){$K_2\square C_4\cong K_2^3$};

 \end{tikzpicture}\]

Placing $4$ cops on $(0,0,0), (0,1,0), (1,0,0), (1,1,0)$ ensures capture of the robber on the first cop move  no matter what the robber initial position is. If we have only $3$ cops,  one of the partite sets contains at most $1$ cop and the robber can evade them by placing themselves on top of this cop and then following them around.    Thus $c_{SA}(\square_{i=1}^3 K_2)=4=c_{SA}(K_2)+c_{SA}(C_4)$.

    \end{example}

\begin{example}\label{E:tightboxbound3}

For the case where one graph is bipartite as in Theorem \ref{T:halfbipartiteboxbound}, consider $T\square C_4$ where $T$ represents the terminal graph with one vertex and one loop.   Then  $c_{SA}(T)=1, c_{SA}(C_4)=2$, and $T\square C_4\cong C_4^\ell$, the reflexive 4-cycle, and playing the sneaky-active game on  $T\square C_4$ is equivalent to playing the classic game on $C_4$.  So $c_{SA}(T\square C_4)=c(C_4)=2=c_{SA}(T) + c_{SA}(C_4)/2$.

\[
\begin{tikzpicture}[scale=1.4]
    \foreach \x in {0,...,1}{
\foreach \y in {0,...,1}{

\draw[fill] (\x,\y) circle (2pt);
\draw ( \x,\y ) to[in=-220,out=40,loop, distance=.7cm]  ( \x,\y );
}
};
\draw (0,0)--(0,1)--(1,1)--(1,0)--(0,0);
\node at (1/2,-0.8){$T\square C_4\cong C_4^\ell$};

\end{tikzpicture}
\]

\end{example}

We next use the bound of Theorem \ref{T:bipartiteboxbound} and the homotopy invariance of Theorem \ref{T:invariant}  to compute the sneaky-active cop number for the box product of trees.

We begin with a result on homotopy equivalence.  
\begin{theorem}\label{T:boxtree}
    If $T_1, T_2$ are trees  then $T_1 \square T_2$ is homotopy equivalent to $K_2$. 

    \end{theorem}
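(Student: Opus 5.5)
We plan to fold $T_1\square T_2$ down, one vertex at a time, to $K_2\square K_2\cong C_4$, and then invoke the fold $C_4\to P_3$ of Example \ref{E:HomotopyInvariant} together with Corollary \ref{C:tree} (or directly fold $P_3$ to $K_2$). By Observation \ref{O:fold} a sequence of folds is a homotopy equivalence, so this suffices. Since graphs have no isolated vertices, each tree has at least two vertices. We argue by induction on $|V(T_1)|+|V(T_2)|$. The base case is $T_1=T_2=K_2$, where $K_2\square K_2\cong C_4\simeq P_3\simeq K_2$.

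For the inductive step, suppose without loss of generality that $|V(T_1)|\geq 3$. Choose a leaf $\ell$ of $T_1$ with unique neighbour $p$. The goal is to show that $T_1\square T_2$ folds onto $(T_1\setminus\{\ell\})\square T_2$, which is the box product of two smaller trees. The whole row $\{\ell\}\times T_2$ cannot be removed at once, because a vertex $(\ell,y)$ with $y$ of high degree is not dominated. Instead we peel the row off leaf by leaf.

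\textbf{Peeling step.} Consider the intermediate graph $G_S$ induced on $\big((T_1\setminus\{\ell\})\times T_2\big)\cup(\{\ell\}\times S)$, where $S$ is a subtree of $T_2$. Let $s$ be a leaf of $S$ with neighbour $s'$ in $S$. In $G_S$ we have
\[
N\big((\ell,s)\big)=\{(p,s),(\ell,s')\}.
\]
Both of these vertices are adjacent to $(p,s')$: the first via the $T_2$-edge $s\sim s'$, the second via the $T_1$-edge $\ell\sim p$. Hence $N((\ell,s))\subseteq N((p,s'))$, and $(\ell,s)$ folds onto $(p,s')$, giving $G_{S\setminus\{s\}}$. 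Repeating this, we shrink $S$ from $T_2$ down to a single vertex $\{s_0\}$. At that point $(\ell,s_0)$ is pendant, with sole neighbour $(p,s_0)$. Picking any $t\sim s_0$ in $T_2$, the vertex $(p,t)$ is adjacent to $(p,s_0)$, so $(\ell,s_0)$ folds onto $(p,t)$. What remains is exactly $(T_1\setminus\{\ell\})\square T_2$.

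The step needing the most care is this peeling bookkeeping. We must check that the neighbourhoods are computed in the current intermediate graph and not in the original box product. We must also confirm that the dominating vertex $(p,s')$ is never deleted, which holds because only vertices in row $\ell$ are removed. Once the peeling lemma is in place, the symmetric argument reduces $T_2$ in the same way. The induction then terminates at $K_2\square K_2$, completing the proof.
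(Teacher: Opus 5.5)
Your proposal is correct and follows essentially the paper's route. Both arguments remove the row $\{\ell\}\times V(T_2)$ over a leaf $\ell$ of $T_1$ by folds $(\ell,s)\mapsto(p,s')$; the paper orders these folds by levels of a rooted $T_2$ and inducts on $|V(T_1)|$ with base case $T_1=K_2$, where peeling leaves a copy of $T_2\simeq K_2$, whereas you peel leaves of a shrinking subtree and bottom out at $C_4$. Your explicit fold of the final pendant vertex $(\ell,s_0)$ onto $(p,t)$ also covers the root of $T_2$, a step the paper's ``levels $m\geq j\geq 1$'' leaves implicit.
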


\begin{proof} 
 Our argument rests on the following result:  if  $x$ is a leaf of of $T_1$ and we define $\hat{T}_1$ to be the tree $T_1 \backslash \{x\}$ created by deleting the vertex $x$, then  $T_1 \square T_2 \simeq \hat{T}_1 \square T_2$. We do this by defining a sequence of folds that sequentially delete all vertices $(x, y)$ in $T_1 \square T_2$ for $y \in V(T_2)$.   To define these folds, root tree $T_2$ and assume it has height $m$.   We start with vertices $y_m$ of level $m$ in $T_2$.    
 Since both $x$ and $y_m$ are leaves of their respective trees, there is a fold map that takes  $(x, y_m) $ to $(\hat{x}, \hat{y}_m)$ where $\hat{x},\hat{y}_m$ are the parents of vertices $x, y_m$ respectively.
 Once all vertices  $(x, y_m)$ for $y_m$ of level $m$ are deleted, we then fold vertices $(x, y_{m-1})$ for all level  $m-1$  vertices $y_{m-1}$ of $T_2$. The only remaining neighbors of vertices $(x, y_{m-1})$ are $(\hat{x}, y_{m-1})$ and $(x, \hat{y}_{m-1})$, and so $(x, y_{m-1}) $  can now be folded to $(\hat{x}, \hat{y}_{m-1})$.  We continue to delete vertices $(x, y_j)$ via the fold $(x, y_j)\mapsto (\hat{x}, \hat{y}_j)$, working through the levels $m\geq j\geq 1$ in $T_2$ until all vertices of the form $(x,y)$ have been deleted, leaving us with $\hat{T}_1 \square T_2$.

This result forms the basis for an inductive argument that $T_1 \square T_2 \simeq K_2$, inducting on the number of vertices of the tree $T_1$:  if $|V(T_1)| = 2$ then $\hat{T}_1 \square T_2 = T_2$ and any tree is homotopy equivalent to $K_2$, and if $|V(T_1)| = k+1$ then when we delete a leaf,  $|V(\hat{T}_1)| = k$ and so we get $T_1 \square T_2 \simeq \hat{T}_1 \square T_2 \simeq K_2$ by our inductive hypothesis.   
\end{proof}

\begin{corollary}\label{C:boxtree}
    If $T_1, T_2$ are trees   then $c_{SA}(T_1 \square T_2)=2$.
\end{corollary}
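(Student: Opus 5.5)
This corollary follows almost immediately from results already established, and I would prove it by chaining Theorem \ref{T:boxtree} with the homotopy invariance of the sneaky-active cop number. By Theorem \ref{T:boxtree}, for any trees $T_1, T_2$ the box product $T_1 \square T_2$ is $\times$-homotopy equivalent to $K_2$. Theorem \ref{T:invariant} then gives $c_{SA}(T_1 \square T_2) = c_{SA}(K_2)$. Finally, Proposition \ref{P:cncomplete} with $n = 2$ (equivalently, Corollary \ref{C:tree}, since $K_2$ is itself a tree) gives $c_{SA}(K_2) = 2$, which yields the claim.

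There are two points to check so that the hypotheses of the earlier results are met.

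\begin{itemize}
\item \textbf{Connectivity and isolated vertices.} The paper's standing convention forbids isolated vertices, so each tree has at least two vertices. This is exactly the base case used in the induction of Theorem \ref{T:boxtree}. The box product of two connected graphs is connected, so no appeal to Observation \ref{O:notconnected} is needed.
\item \textbf{Validity of the folds.} Theorem \ref{T:invariant} applies to any sequence of folds and unfolds, by Observation \ref{O:fold}, and this is precisely what the proof of Theorem \ref{T:boxtree} constructs.
\end{itemize}

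As an independent sanity check, Theorem \ref{T:bipartiteboxbound} gives the upper bound $c_{SA}(T_1 \square T_2) \le c_{SA}(T_1) + c_{SA}(T_2) = 4$, which is weaker than what we need but consistent with it. For the lower bound, $T_1 \square T_2$ has no loops, so Observation \ref{O:cn1} gives $c_{SA} \ge 2$ directly. The matching upper bound of $2$ is where the homotopy argument is genuinely needed.

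I expect no real obstacle. The only subtle point is already absorbed into Theorem \ref{T:boxtree}: one must confirm that each fold $(x, y_j) \mapsto (\hat{x}, \hat{y}_j)$ satisfies the neighborhood containment required in Definition \ref{D:fold}. Since that theorem is given, the corollary is a two-line deduction.
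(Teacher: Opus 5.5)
Your proposal is correct and is exactly the paper's argument: the paper gives no separate proof, presenting the corollary as an immediate consequence of Theorem \ref{T:boxtree}, Theorem \ref{T:invariant}, and $c_{SA}(K_2)=2$ from Proposition \ref{P:cncomplete}. Your side remarks are accurate but not needed for the deduction: connectivity, the lower bound from Observation \ref{O:cn1}, and the weaker bound of $4$ from Theorem \ref{T:bipartiteboxbound}.
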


We extend this result to an arbitrary box product of trees.

\begin{theorem}\label{T:boxmanytrees}
    If $X$ is a box product of $2n$ trees, then $c_{SA}(X)=2n$.  Similarly, if $X$ is a box product of   $2n-1$  trees, then $c_{SA}(X)=2n$.
\end{theorem}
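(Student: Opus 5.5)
The plan is to prove the upper bound with the box-product bounds already established, and the lower bound by a shadow argument on a hypercube sitting inside $X$.

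\textbf{Upper bound.} Group the $2n$ trees into $n$ pairs, so that $X \cong (T_1\square T_2)\square\cdots\square(T_{2n-1}\square T_{2n})$. Each pair is connected and bipartite with $c_{SA}(T_{2i-1}\square T_{2i})=2$ by Corollary \ref{C:boxtree}. A box product of bipartite graphs is again bipartite, so Theorem \ref{T:bipartiteboxbound} applies, and induction on the number of pairs gives $c_{SA}(X)\le 2n$. For $2n-1$ trees, form $n-1$ pairs and one leftover tree $T$ with $c_{SA}(T)=2$ (Corollary \ref{C:tree}). The same induction then gives $c_{SA}(X)\le 2(n-1)+2=2n$.

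\textbf{Reduction of the lower bound to a hypercube.} Let $X$ be a product of $m$ trees. Choose an edge $a_i\sim b_i$ in each tree $T_i$, and let $r_i\colon T_i\to K_2$ be the proper $2$-colouring with $r_i(a_i)=0$ and $r_i(b_i)=1$. The product map $r=\prod r_i\colon X\to Q_m=\square_{i=1}^m K_2$ is a graph homomorphism. It restricts to an isomorphism from the sub-hypercube $\prod\{a_i,b_i\}\subseteq X$ onto $Q_m$, so $r$ is a retraction onto an embedded copy of $Q_m$.

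The robber plays only inside this copy, using a $Q_m$-evasion strategy against the images $r(c_j)$ of the cops. Cop moves map to legal moves in $Q_m$ because $r$ is a homomorphism. If a cop in $X$ moved onto the robber's vertex, then its image would move onto the robber's vertex in $Q_m$. Hence any robber win on $Q_m$ against $k$ cops lifts to a robber win on $X$ against $k$ cops. It therefore suffices to show that $Q_m$ with $m\ge 2n-1$ is robber-win against $2n-1$ cops.

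\textbf{Hypercube evasion.} This is the main step. $Q_m$ is bipartite, so by Observation \ref{O:notbipartite} only the cops in the partite set opposite the robber's start can ever capture. With at most $2n-1$ cops, one partite set holds at most $n-1$ cops. The robber starts in the other partite set, so at most $n-1$ relevant cops remain. The robber chooses a start vertex that is not adjacent to any of these relevant cops. Such a vertex exists because each cop has only $m$ neighbours, while a partite set has $2^{m-1}$ vertices; this is easily more than $(n-1)m$ except in the tiny cases, which I would check by hand.

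From then on the invariant is that after each cop move, every relevant cop is at even distance at least $2$ from the robber. A distance of $0$ would mean a capture has occurred. A cop at distance exactly $2$ in $Q_m$ is adjacent to exactly $2$ of the robber's $m$ neighbours, and a cop at distance at least $4$ is adjacent to none of them. So the $n-1$ relevant cops together block at most $2n-2<2n-1\le m$ neighbours. The robber moves to an unblocked neighbour $u$. Every relevant cop is then at odd distance at least $3$ from $u$, so no cop can move onto $u$ in the next turn, and the invariant is restored.

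This gives $c_{SA}(X)\ge 2n$ in both cases. The delicate points are verifying that the counting $2(n-1)<m$ is exactly what the hypothesis $m\ge 2n-1$ supplies, and checking the initial placement for very small $m$.
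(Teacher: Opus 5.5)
Your argument is correct, and your upper bound is the same as the paper's: pair the trees, then apply Corollary~\ref{C:boxtree} and Theorem~\ref{T:bipartiteboxbound} inductively, with one leftover tree in the odd case. Your lower bound uses the same core count as the paper. Parity leaves at most $n-1$ relevant cops, each such cop at even distance at least $2$ blocks at most two of the robber's neighbours, and the robber has at least $2n-1$ neighbours. What differs is where you run that count. The paper counts directly in $X$, using $\delta(X)\ge m$ and the fact that two vertices at distance $2$ in a product of trees share at most two neighbours (one if they differ in a single coordinate, two if they differ in two). That needs no auxiliary lemma, but it leaves implicit why the robber can start away from every relevant cop. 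In $X$ this is not a plain degree count, since a cop on a high-degree vertex can have far more than $m$ neighbours. Your retraction $r\colon X\to Q_m$ costs one extra step: a robber win on a retract lifts to the whole graph. You verify this correctly, since $r$ is a homomorphism (so cop moves map to legal moves) and $r$ restricts to the identification of the embedded copy with $Q_m$ (so a capture in $X$maps to a capture in $Q_m$). In return the geometry becomes exact and regular, and the initial placement is settled by counting in $Q_m$. You also do not need to check small cases by hand. If $m\ge 2n-1$, then $(n-1)m\le\binom{m}{2}<1+\binom{m}{2}\le 2^{m-1}$ for every $m\ge 1$, so a starting vertex non-adjacent to all relevant cops always exists.
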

\begin{proof} Suppose that  $\displaystyle X=\square_{i=1}^{2n}T_i, n\geq 1$   where $T_i$ is a tree.  
    By Theorem \ref{T:boxtree}, $c_{SA}(\square_{i=1}^{2}T_i)=2$ and so by Theorem \ref{T:bipartiteboxbound} and induction we have $c_{SA}(\square_{i=1}^{2n}T_i)=c_{SA}((\square_{i=1}^{2n-2}T_i)\square (\square_{i=1}^{2}T_i))\leq 2n$. 
    
    Now  suppose we have only $2n-1$ cops.  Since $X$ is bipartite, we may place the robber so that at most $n-1$ cops in the relevant partite set to make a capture.  The robber is either on top of a cop, in which case they share at most $2$ neighbors with that cop, or away from the cop, giving at most $1$ shared neighbors;  so in total there can be at most  $2n-2$ positions which the robber cannot move to.       Since the degree of each vertex  $\delta(X)\geq 2n$,   there is always a neighbor of the robber they may move to in order to avoid capture in the following turn.

The argument for the odd case is similar, as  Theorem \ref{T:bipartiteboxbound} and induction give 
      $c_{SA}(\square_{i=1}^{2n-1}T_i)=c_{SA}((\square_{i=1}^{2n-3}T_i)\square (\square_{i=1}^{2}T_i))\leq 2n$ and the degree of each vertex is $2n-1$, which is still larger than $2n-2$. 
\end{proof}

\begin{example}\label{E:hypercube}
    Recall that Example \ref{E:tightboxbound2} showed that $c_{SA}(\square_{i=1}^3K_2)=4$, which is exactly Theorem \ref{T:boxmanytrees} when $n=2$.
\end{example}

We can utilize some results from \cite{FullyActive} and \cite{CRProduct} to exhibit some bounds about the box product of cycles for the sneaky-active variant. 

We begin with a list of results we will use.  
\begin{lemma}[\cite{FullyActive} Lemma 4.5]\label{L:bipartitecopnumber}
          Let $X$ be a bipartite graph. Let $k = c(X)$, and consider the fully active game on $X$ with $k$ cops. If, after the initial placement by both players, all cops and the robber occupy the same partite set of $X$, then the cops can ensure capture of the robber.
\end{lemma}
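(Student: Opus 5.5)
The plan is to simulate a winning strategy for the classic game on $X$ with $k=c(X)$ cops, using bipartiteness to control parities. Write the partite sets of $X$ as $A$ and $B$, and assume all cops and the robber start in $A$.

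\emph{Parity bookkeeping.} Because every piece must move on every turn, each piece alternates between $A$ and $B$. So before each cop move, all cops and the robber lie in the same part. After each cop move, the cops lie in the opposite part from the robber. Consequently a cop can never move onto the robber, and in the fully active game a capture can only happen when the robber steps onto a cop. The task is therefore to force a position, after a cop move, in which every neighbour of the robber is occupied by a cop. This is the analogue of ``cornering'' the robber in the classic game.

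\emph{Simulation.} Fix a winning classic strategy $\sigma$ for $k$ cops on $X$. I would run a virtual classic game in which the virtual robber's position is always the real robber's current vertex. The real cops maintain the following invariant: just after each cop move, each real cop $c_i$ is adjacent to (or, on the appropriate parity steps, equal to) the current virtual cop position $\sigma_i$. A virtual cop that moves is copied directly by the real cop, shifted by one step. A virtual cop that stays put is simulated by the real cop bouncing back and forth along an edge. This is possible because $X$ has no isolated vertices, and bipartiteness guarantees the bounce returns to the correct part. Since the robber must also move every turn, each real robber move is a legal classic robber move, so $\sigma$ stays applicable.

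\emph{Conversion of a classic capture (the main obstacle).} At some point $\sigma$ reaches a winning position: in the virtual game the robber at $r$ has all of $N[r]$ dominated by virtual cops, so every robber move would be captured on the next virtual cop turn. I must show that in the real game, with the one-step lag and the parity constraint, this translates into every vertex of $N(r)$ being occupied by a real cop. The point is that $N(r)$ lies in the part opposite $r$, which is exactly where the real cops sit after their move. The approach I would try first is to choose the lag so that real cops occupy the \emph{target} vertices of $\sigma$'s capturing moves rather than trailing them. This plausibly works because in the parity-aligned situation the real cops are always one step ``ahead'' relative to the robber. If this shifting argument fails in edge cases, the fallback is the following. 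Pass to the graph $X^{(2)}$ on $A$, where two vertices are adjacent when they are at distance at most $2$ in $X$. Observe that real play restricted to every other turn is a classic game on $X^{(2)}$ in which cops can mimic any $X$-strategy. Then reduce to cop-win domination arguments there.
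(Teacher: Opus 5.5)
\textbf{There is a genuine gap: the conversion step cannot work, and under your reading the statement is false.} Your parity bookkeeping is correct, and it is exactly what sinks the plan. If all pieces share a partite set when the cops are about to move, no cop can ever move onto the robber. So you need the $k=c(X)$ cops to \emph{occupy} every vertex of $N(r)$ after some cop move. A classic winning position only says that $N[r]$ is dominated by the $k$ virtual cops. Occupying $N(r)$ requires $|N(r)|\le k$, and no choice of lag changes that. Passing to $X^{(2)}$ does not help either, since the real capture condition is unchanged.

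In fact, read this way (cops moving first, as in the paper's conventions), the statement is false. $K_{3,3}$ has $c(K_{3,3})=2$: it is triangle-free with no leaves, so it has no dominated vertex, and one vertex from each side dominates it. But every vertex has degree $3$. So after each cop move the robber has an unoccupied neighbor to step to, and the cops, always in the other part when the robber is to move, can never land on it. $Q_3$ behaves the same way. In your own simulation this shows up concretely: when $\sigma$ moves a virtual cop onto $r$, the real cop is in the opposite part, hence only adjacent to $r$.

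\textbf{The intended reading, and how your simulation then closes.} The paper gives no proof of its own; it cites \cite{FullyActive}. But the remark right after the lemma shows how it is used: the cops are in the robber's partite set \emph{after a cop move}, i.e.\ whenever the robber is about to move. With that parity the robber can never step onto a cop, and your simulation works with no conversion step.

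Keep each real cop within distance $1$ of its virtual counterpart, and in the robber's part after every cop move. This is maintainable. If the real cop is at $u$ and the virtual cop moves $w\to w'$, then $d(u,w')\le 2$. If $d(u,w')=1$, step onto $w'$. If $d(u,w')\in\{0,2\}$, step to a common neighbor, or to any neighbor when $u=w'$. Since every piece switches part on every move, the real cop stays in the robber's part after each cop move.

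When $\sigma$ moves a virtual cop onto the robber's vertex, the real cop lies in that vertex's part at distance at most $1$. By bipartiteness it is therefore on that vertex, which is a capture. If the virtual robber instead walks onto a virtual cop, the real cop is adjacent to the robber and captures on the next turn. A preliminary phase brings the real cops within distance $1$ of $\sigma$'s starting positions, bouncing along an edge while they wait. Then $\sigma$ is started from the robber's current vertex, which is fine since $\sigma$ beats every robber placement.

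So the missing idea is the parity alignment. With the literal ``same part before the cops' first move'' reading, you were trying to prove a false statement.
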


We note here that in the case where all cops occupy the same partite set of $X$ as the robber after a cop move, then there is no distinction between the sneaky-active game and the active game since the robber is not adjacent to any cops and thus may not ``sneak" onto any cop.  Thus Lemma \ref{L:bipartitecopnumber} applies to the sneaky-active game as well.

We also utilize active cop numbers of the box product of cycles from \cite{FullyActive} and \cite{CRProduct}.

\begin{theorem}[\cite{FullyActive} Theorem 4.8]\label{T:fullyactivecycles}
    Let $X=C_{n_1}\square C_{n_2}\square \cdots \square C_{n_k}$.  If any of the $n_i$ are odd then $c_a(X)\leq k+1$.
          
\end{theorem}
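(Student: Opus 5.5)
The statement is cited from \cite{FullyActive}, but here is how I would prove it directly. I would adapt the classic result of \cite{CRProduct}, that $c(C_{n_1}\square\cdots\square C_{n_k})\le k+1$, to the fully active rules. The argument would follow the team/shadow strategy used above in Theorems \ref{T:notbipartiteboxbound} and \ref{T:halfbipartiteboxbound}. Reorder the factors so that $n_1$ is odd. The graph $X$ is then non-bipartite. By the argument of Observation \ref{O:notbipartite}, the cops' starting positions are irrelevant: any configuration can be moved to any other by walks of a common length. This is the only place the odd factor should be needed, namely to fix parities.

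\textbf{Step 1 (shadowing).} Assign cops $c_2,\dots,c_{k+1}$ to the coordinates $2,\dots,k$, and reserve $c_1$ and one extra cop for coordinate $1$. The aim is a configuration in which, for each $i\ge 2$, cop $c_i$ agrees with the robber in every coordinate except possibly coordinate $i$. In that configuration $c_i$ copies every robber move made outside coordinate $i$, and when the robber moves in coordinate $i$, $c_i$ closes in along $C_{n_i}$.

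Reaching this configuration is a coordinate-by-coordinate chase, as in Phase 2 of Theorem \ref{T:notbipartiteboxbound}. A cop that has not yet matched some coordinate always moves in that coordinate toward the robber's projection. Because the robber must move every turn and moves in only one coordinate at a time, the relevant projected distance never increases. I also expect it to strictly decrease often enough that matching is achieved in finitely many moves.

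\textbf{Step 2 (parity correction).} A fully active cop can never pause. When a matched cop needs to stay put in a cycle of even length, the distance parity can be wrong, and the cop then oscillates next to the robber without ever landing on it. The fix I would try is to detour through the odd cycle $C_{n_1}$. The cop traverses a closed walk of odd length $n_1$ in coordinate $1$ while the robber moves, which flips its parity relative to the robber. Coordinate $1$ is not a shadowed coordinate for cops $c_2,\ldots,c_{k+1}$ only if it is the free coordinate of $c_1$, so this detour temporarily breaks the cop's match in coordinate $1$. That match is then restored by the chasing argument of Step 1.

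The fully active capture rule also counts a robber who moves onto a cop. This only helps the cops, and it lets a cop at distance one in its free coordinate act as a trap.

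\textbf{Step 3 (the squeeze, the main obstacle).} Once every $c_i$ is in the Step 1 configuration, the robber can never move in coordinate $i$ indefinitely: on a cycle, the pair of cops devoted to coordinate $1$, or the single shadow for $i\ge 2$ combined with the trap rule, forces capture in a bounded number of such moves. The robber must therefore cycle through coordinates. I expect the central difficulty to be making the accounting here precise. The counting should mirror \cite{CRProduct}, where the $(k+1)$st cop is what closes off the last escape direction. I would first try to show that some potential, the sum over $i$ of the free-coordinate distances $d_{C_{n_i}}(c_i,R)$, strictly decreases over every block of $k$ robber moves, using the extra cop in coordinate $1$ to block the robber's single remaining direction. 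If that fails, I would fall back on citing Lemma \ref{L:bipartitecopnumber}-style parity arguments factorwise.
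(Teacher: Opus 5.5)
The paper does not prove this statement; it is quoted from \cite{FullyActive}. Your sketch therefore has to stand on its own, and it does not. Step 3 carries the whole bound, and you leave it as an expectation plus a fallback. More seriously, the mechanism you propose for it is false whenever $n_i\ge 4$.

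A single cop $c_i$ ($i\ge 2$) that agrees with the robber in every coordinate except $i$ can neither capture him nor force him out of coordinate $i$. Suppose the robber keeps running around $C_{n_i}$ away from $c_i$. Then $c_i$, chasing along the shorter arc, stays at the same distance $d\ge 2$ forever, and the trap rule never fires because the robber always moves away. Meanwhile every other cop, including the coordinate-$1$ pair, must spend each turn copying that move to keep its Step 1 match. Along such play your potential $\sum_i d_{C_{n_i}}(c_i,R)$ is constant. The extra cop cannot help, because a two-cop sandwich only closes off coordinate $1$.

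Concretely, take $C_3\square C_6$ with $c_2$ at distance $2$ or $3$ from the robber in the $C_6$ coordinate and the pair at distance $1$ in the $C_3$ coordinate. The robber then runs around $C_6$ indefinitely against your scheme: after each of his moves every cop is at distance at least $2$. So only coordinate $1$ is actually closed off.

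Step 1 has the same defect. Under your chase rule (always step toward the robber's projection), a robber running along $C_{n_i}$ keeps the projected distance constant. The strict decrease you hope for is exactly what fails. Reaching the Step 1 configuration requires every cop other than $c_i$ to match coordinate $i$, and a robber running in coordinate $i$ prevents this unless pursuers come at him from both sides of $C_{n_i}$.

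Step 2's detour takes $n_1$ turns in which the cop copies nothing. It can therefore lose its match in every shadowed coordinate, not only coordinate $1$, and repairing that again leans on the unproved Step 1. Your count is also off: $c_2,\dots,c_{k+1}$ on coordinates $2,\dots,k$, plus $c_1$ and an extra cop on coordinate $1$, is $k+2$ cops.

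What is missing is an idea that denies the robber an indefinite run along any single cycle coordinate using only $k+1$ cops. That means getting a second pursuer onto whichever coordinate he is running along, for example by reassigning cops dynamically rather than fixing one per coordinate. You would then still need to check that the parity bookkeeping forced by the fully active rule, which is where an odd $n_i$ is genuinely used, survives those reassignments. As it stands, the proposal does not establish $c_a(X)\le k+1$.
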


\begin{theorem}[\cite{CRProduct} Theorem 2.6]\label{T:productcycles}
    Let $X = \displaystyle \square_{i=1}^n C_i$ where $C_i$ is a cycle of length at least 4.  Then $c(X)=n+1$.
          
\end{theorem}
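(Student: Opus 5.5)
This is the classic-game result of \cite{CRProduct}, and I would prove it as two inequalities: a lower bound $c(X)\geq n+1$ by a local counting argument, and an upper bound $c(X)\leq n+1$ by an explicit cop strategy built on coordinate projections. The lower bound is the easier direction and mirrors the counting in the proofs of Theorem \ref{T:cnpetersen} and Theorem \ref{T:boxmanytrees}. Throughout, write $v=(v_1,\ldots,v_n)$ for vertices of $X=\square_{i=1}^n C_i$.

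\textbf{Lower bound.} Every vertex of $X$ has degree $2n$, so each closed neighborhood $N[v]$ has $2n+1$ vertices. The key local fact concerns two vertices $u\neq v$ with $d(u,v)\geq 2$: they have at most $2$ common neighbors. If $u,v$ differ in two coordinates by one step each, the common neighbors are the two ``corners''. If they differ in a single coordinate by two steps, there is one common neighbor when that cycle has length at least $5$, and two when it is $C_4$. If $d(u,v)\geq 3$ there are none. The hypothesis that every cycle has length at least $4$ is exactly what rules out triangles, so a cop at distance $\geq 2$ from the robber dominates at most $2$ vertices of $N[v]$ and never $v$ itself.

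The robber strategy against $n$ cops is as follows. Start on a vertex at distance $\geq 2$ from every cop; such a vertex exists because $|V(X)|$ is large compared with $n(2n+1)$. Then maintain the invariant that after each robber move every cop is at distance $\geq 2$. After a cop move, no cop is on the robber (by the invariant), and the cops together dominate at most $2n$ of the $2n+1$ vertices of $N[v]$. A cop now adjacent to the robber dominates at most $2$ vertices of $N[v]\setminus\{v\}$ other than its own vertex, and still dominates at most $2n+1-1$ in total, so the count needs a small case check there. In every case some vertex of $N[v]$ is undominated, and the robber moves (or stays) there, restoring the invariant.

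\textbf{Upper bound.} I would induct on $n$, with base case $c(C_m)=2$. The plan is to use $n$ ``shadow'' cops $c_1,\ldots,c_n$ plus one extra cop $c_0$. Cop $c_i$ aims to match the robber in every coordinate except $i$. In the classic game a cop can capture the projection of the robber onto any sub-box-product $\square_{j\neq i}C_j$, since $c(\cdot)$ of a single cycle only fails because of the cycle itself. Once matched, $c_i$ keeps that match by copying the robber's moves in those coordinates. After this, the robber can only move in coordinate $i$ to escape $c_i$, and the $n$ shadow cops together confine the robber. The extra cop $c_0$ then chases along a single cycle, with the shadow cop $c_i$ for the robber's current direction acting as the second cop on $C_i$ from the other side, so the robber is squeezed as in the two-cop win on a cycle.

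\textbf{Main obstacle.} I expect the hardest step to be making the shadow phase rigorous: showing that $c_i$ can reach and maintain its shadow position on $\square_{j\neq i}C_j$ while the robber is free to move in all coordinates. A cop needs a move in coordinate $i$ whenever the robber moves there, and doing so costs nothing in the other coordinates only if timed correctly. I would first try a potential-function argument, taking $\sum_{j\neq i} d_{C_j}(\text{cop}_j,\text{robber}_j)$ and showing it is non-increasing and strictly decreases whenever the robber does not move in coordinate $i$. Combined with $c_0$ forcing the robber to keep moving, this should yield termination.
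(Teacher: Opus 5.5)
The upper bound has a genuine gap at its first step. A single cop \emph{cannot} capture the robber's projection onto $\square_{j\neq i}C_j$. Projecting to that factor turns the play into an ordinary classic game on $\square_{j\neq i}C_j$, where a move in coordinate $i$ is just a pass. So your claim would give $c(\square_{j\neq i}C_j)=1$, contradicting your own lower bound $c(\square_{j\neq i}C_j)\geq n\geq 2$. Already for $n=2$ the projection is a robber on a single cycle of length at least $4$, who simply runs away from the cop.

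Your fallback potential fails for the same reason. If the robber steps away from $c_i$ in a coordinate $j\neq i$, the cop can at best restore that distance, so $\sum_{j\neq i} d_{C_j}$ can stay constant forever. ``Forcing the robber to keep moving'' does not help, since moving is exactly how he escapes.

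The endgame is also unsupported, even granting the shadow positions. Once $c_0$ and $c_i$ squeeze the robber in his $C_i$-fiber, he switches to some coordinate $j\neq i$ and circles $C_j$ with $c_j$ trailing at constant distance. Since $c_0$ can block only one fiber direction at a time, nothing is shown to decrease.

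For calibration: the straightforward team-shadowing strategy gives $c(G\square H)\le c(G)+c(H)$; this is the classic-game analogue of the proof of Theorem \ref{T:notbipartiteboxbound}. Combined with $c(C_m)=2$ and induction, it only yields $c(X)\le 2n$. The real content of the upper bound is a cooperative strategy that saves $n-1$ of those cops, with a quantity provably forced to decrease, and your proposal does not supply one. The paper itself does not prove this theorem; it imports it from Neufeld and Nowakowski.

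Your lower bound is essentially right, and it is the same counting the paper uses in Theorem \ref{T:boxoddcycle}. However, the adjacent-cop case should be settled rather than deferred to a ``small case check''. Your bound of ``$2n+1-1$'' for a single cop would in fact wreck the count. Since every factor has length at least $4$, $X$ is triangle-free. Hence a cop at $u$ adjacent to the robber at $v$ satisfies $N[u]\cap N[v]=\{u,v\}$. Together with your common-neighbor count for $d(u,v)=2$, this shows every cop dominates at most $2$ of the $2n+1$ vertices of $N[v]$, so $n$ cops always leave the robber a safe vertex. Your initial placement also works, since $|V(X)|\geq 4^n>n(2n+1)$.
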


We use these results to find bounds for the cop number of box products of cycles in our sneaky-active case.

\begin{theorem}\label{T:boxoddcycle}   {Consider a box product of cycles $\square_{i=1}^kC_{n_i}$.}  
If any of the $n_i$ are odd, then $k\leq c_{SA}(\square_{i=1}^kC_{n_i})\leq k+1$.  
\end{theorem}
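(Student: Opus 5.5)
The two inequalities will be proved separately: the upper bound by a parity-fixing reduction to the classic game, and the lower bound by a degree-counting evasion argument like the one in Theorem \ref{T:boxmanytrees}. Write $X=\square_{i=1}^kC_{n_i}$, and assume without loss of generality that $n_1$ is odd. Then $X$ is not bipartite, since it contains the odd cycle $C_{n_1}\square\{v\}$. Also $X$ is $2k$-regular, every vertex is unlooped, and $X$ contains no triangles unless some $n_i=3$.

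\emph{Upper bound.} The aim is to show $k+1$ cops win. The natural inputs are the classic bound $c(X)\le k+1$ from Theorem \ref{T:productcycles} and the fully active bound $c_A(X)\le k+1$ from Theorem \ref{T:fullyactivecycles}. Neither transfers directly, since sneaky capture is stricter. The plan is to reproduce the fully active strategy of \cite{FullyActive} and check that the extra sneaking move never helps the robber.

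First, I would use non-bipartiteness as in Observation \ref{O:notbipartite}. Each cop walks an odd circuit in the $C_{n_1}$ coordinate as needed, so that afterwards every cop is at even distance in the ``parity'' sense from the robber after each cop move. Once parity is aligned coordinatewise, a robber who moves onto a cop's vertex must do so along an edge. The cop then captures on the next turn by stepping back along the robber's move, or a neighbouring cop does.

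More concretely, after alignment the cops play the classic $(k+1)$-cop strategy of \cite{CRProduct}, which shadows coordinates one cycle at a time, using the odd cycle to absorb forced moves. A classic capture in which the robber steps onto a cop is converted into a sneaky capture by the same swap trick as in Lemma \ref{L:bounds}. Adapting the fully active strategy so that it tolerates sneaking is the step I expect to be the main obstacle. I would handle it by arguing that in the fully active endgame the robber is trapped in a region where every neighbour is adjacent to some cop. A robber standing on a cop's vertex has the same neighbourhood as that cop, so it is captured by whichever cop covered that neighbour.

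\emph{Lower bound.} Suppose there are $k-1$ cops. The robber plays a counting evasion. After each cop move, no cop is on the robber's vertex $R$, or else the robber has already been caught. A cop at vertex $c\neq R$ shares at most $2$ neighbours with $R$ in a box product of cycles: two vertices at distance $2$ have at most two common neighbours, and adjacent vertices have none, unless a $C_3$ factor creates a triangle, which gives at most $1$. So the cops together block at most $2(k-1)=2k-2<2k=\deg(R)$ neighbours. The robber therefore always has a move to a vertex not adjacent to any cop, and so avoids capture on the following cop turn.

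For the start, the robber places himself at a vertex not adjacent to any cop, which exists because $k-1$ cops dominate at most $(k-1)(2k)<|V(X)|$ vertices once some $n_i\ge3$. I would double-check that placing on the cop's own vertex is never needed. Iterating this evasion forever gives $c_{SA}(X)\ge k$.
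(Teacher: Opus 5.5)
Your lower bound is correct and is the paper's degree count. A cop not on the robber's vertex shares at most two neighbours with it, so $k-1$ cops threaten at most $2k-2<2k$ of the robber's moves. You are more careful than the paper on two points. The paper's proof overlooks adjacent cops when a $C_3$ factor is present (it asserts that sharing a neighbour forces $d(r,c_i)=2$), and it does not treat the robber's initial placement. Your count $2k(k-1)<3^k\le |V(X)|$ settles the placement.

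The upper bound, however, is not proved. Some steps of your sketch fail and others are only asserted.

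(i) Since $X$ is not bipartite, relative parity is not an invariant of play. The robber can change the parity of his distance to a cop with his own move: on $C_5$, with the cop at $0$ and the robber at $2$, he steps to $3$ and the distance is still $2$. So an alignment made at the start does not persist. Keeping every cop non-adjacent to the robber after each cop move becomes an extra constraint on the cops, and you never show it is compatible with a winning strategy. In the bipartite case this condition is automatic, which is why the remark after Lemma \ref{L:bipartitecopnumber} works for Theorem \ref{T:boxevencycle}; that mechanism is unavailable here.

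(ii) A cop whose vertex the robber has just entered cannot capture by ``stepping back.'' The graph is loopless, so that cop must leave the vertex. Capture needs a \emph{second} cop adjacent to that vertex, and nothing in your sketch guarantees one.

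(iii) The swap trick of Lemma \ref{L:bounds} needs a dedicated partner for every cop. Applied to the $(k+1)$-cop classic strategy of Theorem \ref{T:productcycles}, it gives $2(k+1)$ cops, not $k+1$. The same doubling occurs if you attach a trailing partner to each cop of the fully active strategy.

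(iv) The claimed structure of the fully active endgame is asserted, not derived. A strategy designed for the fully active game never has to handle a robber standing on a cop's vertex. So there is no reason its positions cover the neighbourhood of such a vertex after the cops' next move.

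For comparison, the paper settles the upper bound in one line, $c_{SA}\le c_A\le k+1$, citing Lemma \ref{L:bounds} and Theorem \ref{T:fullyactivecycles}. But Lemma \ref{L:bounds} proves $c_A(X)\le c_{SA}(X)$, the opposite inequality, for exactly the reason you gave: sneaky capture is stricter. So that step fails too. The only valid comparison with the other games in the paper goes through pairing, $c_{SA}\le 2c\le 2k+2$ when all $n_i\ge 4$.

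Your diagnosis is therefore right. To finish, you need an actual sneaky-active strategy for $k+1$ cops on $\square_{i=1}^kC_{n_i}$ that ensures another cop is adjacent whenever the robber steps onto a cop. Alternatively, you need a proof that the strategy of \cite{FullyActive} already has this property. Neither your proposal nor the paper's proof supplies this, so as things stand only $k\le c_{SA}(\square_{i=1}^kC_{n_i})$ is established.
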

\begin{proof}
    We first show that $k\leq c_{SA}(\square_{i=1}^kC_{n_i})$. Without loss of generality, let the position of the robber be $r=(0,0,\ldots,0)$. If the robber $r$ and cop $c_i$ share a vertex as a neighbor, then $d(r, c_i)=2$, and there is either a coordinate where $c_i=(0,\ldots, \pm 2, 0)$ or a pair of coordinates where $c_i=(0, \ldots, \pm 1,\ldots, \pm 1, \ldots,0 )$.  Since each neighbor of $r$ has the form $(0,\ldots, \pm1, \ldots, 0)$, we have that $|N(r, c_i)|\leq 2$.  Since $\square_{i=1}^kC_{n_i}$ is a regular graph with degree $2k$, at least $k$ cops are needed to cover all neighbors of $r$, else the robber may move to a neighbor and avoid capture in the following turn.

    For the upper bound, observe that $c_{SA}(\square_{i=1}^kC_{n_i})\leq c_A(\square_{i=1}^kC_{n_i})\leq k+1$ by Lemma \ref{L:bounds} and Theorem \ref{T:fullyactivecycles}.
\end{proof}

\begin{theorem} \label{T:boxevencycle}   
 Consider a box product of even cycles $\square_{i=1}^kC_{2n_i}$.  We have that $c_{SA}(\square_{i=1}^kC_{2n_i})\in \{2k, 2k+2\}$.
\end{theorem}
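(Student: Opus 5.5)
We prove $2k \leq c_{SA}(\square_{i=1}^kC_{2n_i})$ and $c_{SA}(\square_{i=1}^kC_{2n_i}) \leq 2k+2$, and then use Lemma \ref{L:bipartite} to rule out the odd value $2k+1$ (and every other odd value in between). Write $X=\square_{i=1}^kC_{2n_i}$. Since each even cycle is bipartite, $X$ is a connected bipartite graph. By Lemma \ref{L:bipartite}, $c_{SA}(X)$ is even, since a minimal winning strategy places $c_{SA}(X)/2$ cops in each partite set. So once both bounds are established, the only possible values are $2k$ and $2k+2$.

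\textbf{Lower bound.} We reuse the neighbourhood-counting argument from the proof of Theorem \ref{T:boxoddcycle}, now combined with bipartiteness. Suppose there are at most $2k-1$ cops. The robber starts in the partite set containing more cops, so at most $k-1$ cops are in the opposite (relevant) partite set. Cops in the same partite set as the robber can never capture, by Observation \ref{O:notbipartite}. The graph $X$ is $2k$-regular. After a cop move, any relevant cop $c$ that has not captured is at even distance from the robber $r$, either $0$ or at least $2$.
\begin{itemize}
\item If $d(r,c)\geq 2$, then $|N(r)\cap N(c)|\leq 2$, exactly as computed in Theorem \ref{T:boxoddcycle}. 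This needs all cycles to have length at least $4$, which holds here except possibly for $C_4$ factors. For a $C_4$ factor, the displacement $\pm 2$ is a single vertex, so the count is still at most $2$.
\item If $c=r$, the cop covers all $2k$ neighbours. This is the dangerous case.
\end{itemize}
To handle it, we argue as in the proof of Theorem \ref{T:boxmanytrees}. The relevant danger is a cop sitting on $r$ right after the robber's move. Since the robber chooses its own move, it can always avoid stepping onto a vertex occupied by a cop whose neighbourhood covers too much. More precisely, after the robber moves, the relevant cops are at odd distance. The robber needs a neighbour $v$ of $r$ that is not adjacent to any relevant cop. Relevant cops are at odd distance $\geq 1$ from $r$, and each neighbour of $r$ lies at even distance from them. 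A cop at distance $1$ from $r$ is adjacent to at most $1$ neighbour of $r$ in a square-free product of cycles of length $\geq 6$, and to at most $2$ when $C_4$ factors create 4-cycles. A cop at distance $3$ is also adjacent to at most $2$ neighbours. So at most $2(k-1)<2k$ neighbours of $r$ are threatened, and the robber always has a safe move. The main obstacle is making this count uniform, including for $C_4$ factors. If necessary, we treat $C_4\simeq K_2$ via Theorem \ref{T:invariant}, although this changes the graph, so direct counting is preferable.

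\textbf{Upper bound.} We apply Theorem \ref{T:productcycles} together with Lemma \ref{L:bounds} when all $2n_i\geq 4$: $c(X)=k+1$, so $c_{SA}(X)\leq 2(k+1)=2k+2$. Alternatively, we induct using Theorem \ref{T:bipartiteboxbound} and Proposition \ref{P:cnevencycle}, but that gives only $4k$, so the classic cop number route is the right one. Combining this with the evenness from Lemma \ref{L:bipartite} gives $c_{SA}(X)\in\{2k,2k+2\}$.
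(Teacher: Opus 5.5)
Your skeleton matches the paper's: evenness from Lemma~\ref{L:bipartite}, a common-neighbour count showing that $k-1$ relevant cops cannot win, and an upper bound of $2k+2$. The argument is correct once the lower-bound discussion is cleaned up. The real difference is in the upper bound. The paper puts $k+1$ cops in each partite set and invokes the fully active Lemma~\ref{L:bipartitecopnumber} together with $c(X)=k+1$ from Theorem~\ref{T:productcycles}. That route requires transferring a fully active result to the sneaky-active game, which the paper justifies only in a brief remark about matching the two games' turn structure. You instead feed $c(X)=k+1$ into the paper's own pairing bound $c_{SA}\le 2c$ from Lemma~\ref{L:bounds}. Your route is more self-contained and avoids that transfer. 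The paper's route is phrased as ``$k+1$ cops in the relevant class suffice''. Your adjacent pairs straddle the bipartition, so your construction also places exactly $k+1$ cops in each class, and nothing is lost.

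On the lower bound, your first bullet is already the complete argument, and the rest should go.
\begin{itemize}
\item The robber chooses its move right after a cop move. At that moment every relevant cop is at even distance from $r$. Distance $0$ would mean that cop just stepped onto $r$, i.e.\ a capture, which the robber's previous move ruled out.
\item So the ``dangerous case'' $c=r$ never arises for a relevant cop. Irrelevant cops may share the robber's vertex, but they can never capture.
\item The bound $|N(r)\cap N(c)|\le 2$ for $d(r,c)\ge 2$ then gives at most $2(k-1)<2k$ blocked neighbours directly.
\end{itemize}
The paragraph beginning ``To handle it'' analyses the position after the robber's move, when it is the cops' turn, and its counts are vacuous. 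In a bipartite graph a vertex at distance $1$ or $3$ from $r$ is adjacent to no neighbour of $r$, so the remarks about $C_4$ factors creating extra common neighbours there do no work.

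What you should add instead is the initial placement. The robber must start at a vertex of its chosen class that is adjacent to none of the at most $k-1$ relevant cops. Such a vertex exists: at most $2k(k-1)$ vertices are excluded, while the class has at least $4^k/2$ vertices. The paper leaves this step implicit too.
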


\begin{proof}
    Since $\square_{i=1}^kC_{2n_i}$ is bipartite, it suffices to show that the number of cops needed in each partite set is  $k$ or $k+1$.  The degree argument from the proof of Theorem \ref{T:boxoddcycle} shows that we need at least $k$ cops per partite set.  Then by Lemma \ref{L:bipartitecopnumber} and Theorem \ref{T:productcycles}, we have that  $k+1$ cops in each partite set is sufficient to capture the robber.  Thus $c_{SA}(\square_{i=1}^kC_{2n_i})=2k$ or $2k+2$.
\end{proof}

We now consider box products of complete graphs.  

\begin{lemma}\label{L:completemin}
     Let $K_n, K_m$ be complete graphs.   Then $c_{SA}(K_m\square K_n) \leq \min(m,n) $.
\end{lemma}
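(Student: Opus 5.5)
Throughout, the approach is a direct strategy, assuming without loss of generality $m\le n$ and viewing a vertex $(a,b)$ of $K_m\square K_n$ as lying in ``row'' $a\in V(K_m)$ and ``column'' $b\in V(K_n)$. Two vertices are adjacent exactly when they share a row and differ in column, or share a column and differ in row. Since the graphs in $\Gph$ have no isolated vertices, $m,n\ge 2$. The case $m=n=2$ is $C_4$, already handled in Example \ref{E:HomotopyInvariant}, so I will assume $n\ge 3$ and use $m$ cops.

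The invariant I will maintain is: \emph{after every cop turn, every row contains exactly one cop}. Cops keep this by only ever moving within their own row, which is a legal move as long as the new column differs from the old one. The key observation is that under this invariant, every vertex $(a,b)$ not occupied by a cop is adjacent to a cop. Indeed, the cop $(a,z)$ in row $a$ has $z\neq b$, so it shares row $a$ with $(a,b)$ and differs in column. Hence if the robber ever ends a turn on an unoccupied vertex, some cop moves onto it and captures. Initially I place cop $i$ at $(i,i)$. If the robber starts on an unoccupied vertex, he is captured immediately, so the only way for the robber to survive is to always stand on a cop-occupied vertex at the moment the cops move.

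The main step, and the expected obstacle, is the sneaky case, where the robber sits on a cop vertex $r=(a,c)$. The difficulty is that cops must move, and a naive shift of all cops lets the robber hop from one cop vertex to another forever. My plan is to set a trap with two cops sharing a column.
\begin{itemize}
\item Choose a column $d\neq c$ and a second row $b\neq a$ whose cop is not in column $d$. This is possible since at most one cop lies in column $d$, and if necessary we adjust $d$ using $n\ge 3$.
\item Move the row-$a$ cop from $(a,c)$ to $(a,d)$, and the row-$b$ cop to $(b,d)$.
\item Move every other cop within its row to some column different from $c$ and from its current column; $n\ge 3$ makes this possible.
\end{itemize}
The invariant is preserved. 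The cop vertices adjacent to $r$ are only those in row $a$ or column $c$, and by construction the only such vertex is $(a,d)$. So the robber either moves to an unoccupied vertex and is captured, or moves to $(a,d)$. In the latter case the cop at $(b,d)$ shares column $d$ with him and captures on the next cop turn.

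It remains to check the rules at every cop turn. Each cop must move to an adjacent vertex, and every move above changes the column within the same row, so it is legal. I also need that in the non-sneaky turns (the robber is on an unoccupied vertex) a capture is available, which is exactly the domination observation. Together these give a win for $m=\min(m,n)$ cops within at most two rounds after the robber's placement.
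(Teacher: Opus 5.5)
Your proof is correct, but it works harder than the paper's, which takes a different placement. The paper puts the $m$ cops in a single column, at $(1,1),(2,1),\ldots,(m,1)$ with $m\le n$. This set is totally dominating:
an unoccupied vertex $(a,b)$ is adjacent to the cop $(a,1)$ in its row, and a cop-occupied vertex $(a,1)$ is adjacent to every other cop $(a',1)$ in the same column, which exists since $m\ge 2$. So wherever the robber starts, he is adjacent to a cop and is captured on the first cop move, and the sneaky case never arises.

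Your diagonal placement $(i,i)$ dominates only the unoccupied vertices, because the diagonal cops are pairwise non-adjacent. That is what forces your two-round trap, the assumption $n\ge 3$, and the separate appeal to $C_4$ for $m=n=2$; the column placement handles $C_4$ without special treatment. Your trap succeeds precisely by putting two cops in a common column, which is the paper's configuration. What your route buys is a little robustness: it shows that any placement with exactly one cop per row wins within two cop moves once $n\ge 3$.

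One justification should be tightened. ``At most one cop lies in column $d$'' is not part of your stated invariant, and your own trap breaks it. It holds here only because the trap is used at the first cop turn, from the diagonal position. More cleanly, pick any row $b\ne a$ first, with its cop in column $z_b$, and then choose $d\notin\{c,z_b\}$, which $n\ge 3$ permits.
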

\begin{proof}
    Without loss of generality, suppose $m\leq n$, then place cops initially on $(1,1), (2,1), \ldots, (m,1).$  For any initial robber position, there is a cop adjacent to this position.
\end{proof}

\begin{proposition}\label{P:boxK3leq}
     Let  $K_m, K_n$ be complete graphs with $m,n\geq 3$.   Then  $c_{SA}(K_m\square K_n) $ $ \geq 3$.
\end{proposition}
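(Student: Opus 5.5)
We show that two cops cannot win on $K_m\square K_n$ (with $m,n\ge 3$) by exhibiting a robber strategy. The key is counting how many neighbors of the robber's vertex a single cop can threaten. Let the robber be at $r=(a,b)$. Its neighborhood is $N(r)=\{(a',b): a'\neq a\}\cup\{(a,b'): b'\neq b\}$, which has $(m-1)+(n-1)\ge 4$ vertices. The graph has no loops, so a cop at $c$ threatens exactly the vertices of $N(c)$ on its next move.

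We compute $|N(c)\cap N(r)|$ for a cop at $c=(x,y)$.
\begin{itemize}
\item If $c=r$, the cop covers all of $N(r)$. This case has to be prevented.
\item If $c$ is adjacent to $r$, say $c=(x,b)$ with $x\ne a$, then $N(c)\cap N(r)$ consists of the row vertices $(a',b)$ with $a'\ne a,x$. This gives $m-2$ vertices, which can be large.
\item If $c$ is non-adjacent to $r$, so $x\ne a$ and $y\ne b$, the common neighbors are exactly $(x,b)$ and $(a,y)$. This gives $2$ vertices.
\end{itemize}
Because of the middle case, counting alone will not work. Instead we maintain the invariant that \emph{after each robber move, the robber is neither on nor adjacent to any cop}. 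Under this invariant no cop can capture on the next move.

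We establish the invariant at the start. Two cops at $c_1,c_2$ together cover at most $2+(m-1)+(n-1)$ vertices, counting each cop's position and its closed row and column. More precisely, $c_1$'s row and column together with $c_2$'s row and column leave some vertex $(a,b)$ with $a\notin\{x_1,x_2\}$ and $b\notin\{y_1,y_2\}$. Such a vertex exists because $m,n\ge3$. The robber starts there, so it is non-adjacent to both cops.

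We then maintain the invariant after each cop move. Suppose the cops move to new positions $(x_1,y_1)$ and $(x_2,y_2)$ and the robber has not been captured, so $r=(a,b)$ is neither cop position. The robber wants to move to a neighbor $(a',b)$ with $a'\ne a$ and $a'\notin\{x_1,x_2\}$, while also requiring $b\notin\{y_1,y_2\}$. Alternatively it moves to $(a,b')$ with the symmetric condition. The destination must lie outside every cop's row and column, because a vertex sharing a row or column with a cop is the cop itself or adjacent to it. We split into cases on how many of $y_1,y_2$ equal $b$ and how many of $x_1,x_2$ equal $a$.

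\begin{itemize}
\item If neither $y_i$ equals $b$, the robber moves within row $b$ to some $a'\notin\{a,x_1,x_2\}$. Such an $a'$ exists when $m\ge 4$.
\item If neither $x_i$ equals $a$, the robber moves within column $a$ symmetrically.
\item Otherwise some cop shares the robber's row and some cop shares its column. A single cop cannot do both unless it sits on $r$, which would be a capture. So one cop is $(x_1,b)$ and the other is $(a,y_2)$. The robber now has no safe row or column move, since every move in row $b$ lands in $c_1$'s row and every move in column $a$ lands in $c_2$'s column.
\end{itemize}

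This last case is the main obstacle. We must show it never arises, using the fact that before the cop move both cops were non-adjacent to $r$. Each cop moves along a single row or column, so a cop previously at $(x,y)$ with $x\ne a$ and $y\ne b$ lands in row $b$ only as $(x,b)$, and in column $a$ only as $(a,y)$. Both configurations are therefore reachable, and the invariant is too strong.

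So we weaken the invariant. We allow a cop adjacent to the robber along its row, and require only that there remain a safe move. We then recheck the count of threatened neighbors. A cop at $(x_1,b)$ threatens the row vertices $(a',b)$ with $a'\ne a,x_1$, which is $m-2$ vertices, plus $(a,y)$ for no $y$. So the column $\{(a,b'):b'\ne b\}$, which has $n-1\ge2$ vertices, is threatened only by $c_2=(a,y_2)$, which covers all of them except $b'=y_2$, where it sits. Moving onto a cop is permitted in this variant. Hence the robber moves to $(a,y_2)$, sneaking onto $c_2$. From there its row neighbors are covered by $c_2$ and its column neighbors partly by $c_1$.

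The remaining work is a finite case check on positions up to symmetry, covering the three relations robber-on-cop, adjacent, and non-adjacent, for two cops. The aim is to show that from every reachable position the robber has a move avoiding $N(c_1)\cup N(c_2)$ after any cop response, using $m,n\ge3$. I expect this to be the hardest part, particularly when the robber shares a vertex with a cop and $m=3$.
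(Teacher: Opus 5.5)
Your proposal has a genuine gap. It stops where the proof has to happen: the ``finite case check'' you defer is the whole content of the proposition, and you never fix which positions the robber is supposed to maintain, so there is not yet anything to check. The relaxation also cannot be ``a cop adjacent to the robber'' after the robber's move, since that cop simply captures. What your sneaking computation actually produces is the robber standing \emph{on} a cop.

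Worse, the one concrete commitment you make, starting the robber non-adjacent to both cops, loses on $K_3\square K_3$. Suppose the cops start on $(1,1)$ and $(2,2)$. Your rule puts the robber on $(3,3)$, and the cops then move to $(1,2)$ and $(2,1)$. Each of $(1,3),(2,3),(3,1),(3,2)$ is now adjacent to a cop and none is occupied, so the robber is caught. Similarly, with both cops on $(1,1)$ and the robber on $(2,2)$, the robber loses to the cop move $(3,1),(1,3)$. In these cases the robber must start on top of a cop, so no case check built on your opening can succeed when $m=n=3$.

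What is needed is an explicit family of positions (after the robber's move) that the robber can reach initially and can always restore.

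For $m+n\ge 7$ your counting nearly gives one: ``each cop is on the robber or non-adjacent to it.'' After any cop move there are three cases.
\begin{itemize}
\item If both cops are non-adjacent to the robber, they threaten at most $4<m+n-2$ of its neighbours, so a safe neighbour exists.
\item If some cop is in the robber's row and no cop is in its column, the robber escapes along its column. At most one of the $n-1\ge 2$ column vertices is threatened. The same holds with rows and columns exchanged.
\item If one cop is in the row and the other in the column, the two cops are non-adjacent, and the robber sneaks onto either one.
\end{itemize}
A cop that was on the robber must land in its row or column, so these cases are exhaustive.

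For $K_3\square K_3$ the family must be shrunk, because by the examples above an isolated robber facing two non-adjacent or coincident cops is lost. The correct family is the paper's (I)--(III):
\begin{itemize}
\item the robber is on both cops, which coincide;
\item the robber is on one cop and non-adjacent to the other;
\item the robber is non-adjacent to two mutually adjacent cops.
\end{itemize}
The paper's diagrams verify that this family is closed under cop moves. When $\max(m,n)>3$ the paper enlarges it by (IV) and (V), which restores exactly the family described above.
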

\begin{proof}
 
    We begin with the case where $n,m=3$. Suppose there are two cops $c_1, c_2$. We consider the three possible initial configurations for two cops relative to each other:  they are either on the same vertex (I), connected (II) or disconnected (III).   In each case, we place the robber as shown in the diagram below;  in the grid shown, any vertices in the same row or the same column are connected in $K_3 \square K_3$, and any others are not.     From here,  no matter how the cops move, the robber can move back to the positions shown:  if the cops move into configurations (A)-(F) (shown up to symmetry), the robber moves back into a desired configuration (I)-(III) as indicated by the arrows in the diagram below.

    \[
    \begin{tikzpicture}
        \node (R1) at (-5,0){
        \begin{tikzpicture}[scale=0.6]
         \foreach \x in {0,...,2}{
             \draw (0,\x)--(2,\x);
             \draw (0,\x) edge[bend left] (2,\x);
             \draw (\x,0)--(\x,2);
             \draw (\x,0) edge[bend right] (\x,2);
             \foreach \y in {0,...,2}{
                \draw[fill] (\x,\y) circle (1.75pt);
             }
         }
          \draw[fill, blue] (0,0) node[above left]{$c_1$}  circle (3pt);
          \draw[fill, blue] (0,0) node[below left]{$c_2$}  circle (3pt);
          \draw[red,fill ] (1,0) node[below ]{$R$}  circle (3.75pt);
          \node at (1,-1.2){\tiny (A)};
         \end{tikzpicture}
        };

        \node (R2) at (-3,0){
        \begin{tikzpicture}[scale=0.6]
         \foreach \x in {0,...,2}{
             \draw (0,\x)--(2,\x);
             \draw (0,\x) edge[bend left] (2,\x);
             \draw (\x,0)--(\x,2);
             \draw (\x,0) edge[bend right] (\x,2);
             \foreach \y in {0,...,2}{
                \draw[fill] (\x,\y) circle (1.75pt);
             }
         }
          \draw[fill, blue] (0,0) node[below]{$c_1$}  circle (3pt);
          \draw[fill, blue] (1,0) node[below ]{$c_2$}  circle (3pt);
          \draw[red, fill] (2,0) node[below ]{$R$}  circle (3.75pt);
          \node at (1,-1.2){\tiny (B)};
         \end{tikzpicture}
        };
        
        \node (R3) at (-1,0){
        \begin{tikzpicture}[scale=0.6]
         \foreach \x in {0,...,2}{
             \draw (0,\x)--(2,\x);
             \draw (0,\x) edge[bend left] (2,\x);
             \draw (\x,0)--(\x,2);
             \draw (\x,0) edge[bend right] (\x,2);
             \foreach \y in {0,...,2}{
                \draw[fill] (\x,\y) circle (1.75pt);
             }
         }
          \draw[fill, blue] (0,0) node[below]{$c_1$}  circle (3pt);
          \draw[fill, blue] (1,0) node[below ]{$c_2$}  circle (3pt);
          \draw[red, fill] (1,1) node[below right]{$R$}  circle (3.75pt);
          \node at (1,-1.2){\tiny (C)};
         \end{tikzpicture}
        };

        \node (R4) at (1,0){
        \begin{tikzpicture}[scale=0.6]
         \foreach \x in {0,...,2}{
             \draw (0,\x)--(2,\x);
             \draw (0,\x) edge[bend left] (2,\x);
             \draw (\x,0)--(\x,2);
             \draw (\x,0) edge[bend right] (\x,2);
             \foreach \y in {0,...,2}{
                \draw[fill] (\x,\y) circle (1.75pt);
             }
         }
          \draw[fill, blue] (0,0) node[below]{$c_1$}  circle (3pt);
          \draw[fill, blue] (1,0) node[below ]{$c_2$}  circle (3pt);
          \draw[red, fill] (2,1) node[below right]{$R$}  circle (3.75pt);
          \node at (1,-1.2){\tiny (D)};
         \end{tikzpicture}
        };

        \node (R5) at (3,0){
        \begin{tikzpicture}[scale=0.6]
         \foreach \x in {0,...,2}{
             \draw (0,\x)--(2,\x);
             \draw (0,\x) edge[bend left] (2,\x);
             \draw (\x,0)--(\x,2);
             \draw (\x,0) edge[bend right] (\x,2);
             \foreach \y in {0,...,2}{
                \draw[fill] (\x,\y) circle (1.75pt);
             }
         }
          \draw[fill, blue] (0,0) node[below ]{$c_1$}  circle (3pt);
          \draw[fill, blue] (1,1) node[below left]{$c_2$}  circle (3pt);
          \draw[red, fill] (2,1) node[ right]{$R$}  circle (3.75pt);
          \node at (1,-1.2){\tiny (E)};
         \end{tikzpicture}
        };

        \node (R6) at (5,0){
        \begin{tikzpicture}[scale=0.6]
         \foreach \x in {0,...,2}{
             \draw (0,\x)--(2,\x);
             \draw (0,\x) edge[bend left] (2,\x);
             \draw (\x,0)--(\x,2);
             \draw (\x,0) edge[bend right] (\x,2);
             \foreach \y in {0,...,2}{
                \draw[fill] (\x,\y) circle (1.75pt);
             }
         }
          \draw[fill, blue] (0,0) node[below ]{$c_1$}  circle (3pt);
          \draw[fill, blue] (1,1) node[below left]{$c_2$}  circle (3pt);
          \draw[red, fill] (1,0) node[ below]{$R$}  circle (3.75pt);
          \node at (1,-1.2){\tiny (F)};
         \end{tikzpicture}
        };

        \node (C1) at (-3.5,4){
        \begin{tikzpicture}
         \foreach \x in {0,...,2}{
             \draw (0,\x)--(2,\x);
             \draw (0,\x) edge[bend left] (2,\x);
             \draw (\x,0)--(\x,2);
             \draw (\x,0) edge[bend right] (\x,2);
             \foreach \y in {0,...,2}{
                \draw[fill] (\x,\y) circle (1.75pt);
             }
         }
          \draw[fill, blue] (0,0) node[above left]{$c_1$}  circle (3pt);
          \draw[fill, blue] (0,0) node[below left]{$c_2$}  circle (3pt);
          \draw[red, ] (0,0) node[below right]{$R$}  circle (3.75pt);
          \node at (1,-0.5){\tiny (I)};
         \end{tikzpicture}
        };

        \node (C2) at (0,4){
        \begin{tikzpicture}
         \foreach \x in {0,...,2}{
             \draw (0,\x)--(2,\x);
             \draw (0,\x) edge[bend left] (2,\x);
             \draw (\x,0)--(\x,2);
             \draw (\x,0) edge[bend right] (\x,2);
             \foreach \y in {0,...,2}{
                \draw[fill] (\x,\y) circle (1.75pt);
             }
         }
          \draw[fill, blue] (0,0) node[below]{$c_1$}  circle (3pt);
          \draw[fill, blue] (1,0) node[below ]{$c_2$}  circle (3pt);
          \draw[red, fill] (2,1) node[below right]{$R$}  circle (3.75pt);
          \node at (1,-0.5){\tiny (II)};
         \end{tikzpicture}
        };

        \node (C3) at (3.5,4){
        \begin{tikzpicture}
         \foreach \x in {0,...,2}{
             \draw (0,\x)--(2,\x);
             \draw (0,\x) edge[bend left] (2,\x);
             \draw (\x,0)--(\x,2);
             \draw (\x,0) edge[bend right] (\x,2);
             \foreach \y in {0,...,2}{
                \draw[fill] (\x,\y) circle (1.75pt);
             }
         }
          \draw[fill, blue] (0,0) node[below ]{$c_1$}  circle (3pt);
          \draw[fill, blue] (1,1) node[below left]{$c_2$}  circle (3pt);
          \draw[red, ] (1,1) node[below right]{$R$}  circle (3.75pt);
          \node at (1,-0.5){\tiny (III)};
         \end{tikzpicture}
        };

        \draw[red,->] (R1) edge (C1);
        \draw[red,->] (R2) edge (C2);
        \draw[red,->] (R3) edge (C2);
        \draw[red,->] (R4) edge (C2);
        \draw[red,->] (R5) edge (C3);
        \draw[red,->] (R6) edge (C3);

        \node (B1) at (-2.5,-2){\begin{tikzpicture}[scale=0.6]
         \foreach \x in {0,...,2}{
             \draw (0,\x)--(2,\x);
             \draw (0,\x) edge[bend left] (2,\x);
             \draw (\x,0)--(\x,2);
             \draw (\x,0) edge[bend right] (\x,2);
             \foreach \y in {0,...,2}{
                \draw[fill] (\x,\y) circle (1.75pt);
             }
         }
          \draw[fill, blue] (0,0) node[above left]{$c_1$}  circle (3pt);
          \draw[fill, blue] (0,0) node[below left]{$c_2$}  circle (3pt);
          \draw[red,fill ] (1,1) node[below right]{$R$}  circle (3.75pt);
          \node at (1,-0.5){\tiny (X)};
         \end{tikzpicture}};

         \node (B2) at (2.5,-2){\begin{tikzpicture}[scale=0.6]
         \foreach \x in {0,...,2}{
             \draw (0,\x)--(2,\x);
             \draw (0,\x) edge[bend left] (2,\x);
             \draw (\x,0)--(\x,2);
             \draw (\x,0) edge[bend right] (\x,2);
             \foreach \y in {0,...,2}{
                \draw[fill] (\x,\y) circle (1.75pt);
             }
         }
          \draw[fill, blue] (0,0) node[below ]{$c_1$}  circle (3pt);
          \draw[fill, blue] (1,1) node[below left]{$c_2$}  circle (3pt);
          \draw[red, fill] (2,2) node[ right]{$R$}  circle (3.75pt);
          \node at (1,-0.5){\tiny (Y)};
         \end{tikzpicture}};

    \end{tikzpicture}
    \]

    The cops cannot move from configurations (I)-(III) to either (X) or (Y).  For  configurations (I) and (III), the cop $c_2$ must share a coordinate with the robber after moving.   From configuration (II), any movement where the cops end sharing 0 or 2 coordinates forces at least one of them to share a coordinate with the robber as well, and thus (II) may not move to (X) or (Y).

    For $n>3$ or $m>3$, note that the cops and the robber lie on some $K_3\square K_3$ subgraph.  We then consider possible configurations on this subgraph.  The argument here now follows as above, except that now all configurations can now be reached from our initial configurations (I)-(V) but that we may use the extra vertices to move back from any of these configurations back into (I)-(V), possibly on a different  $K_3\square K_3$ subgraph.

     \[
    \begin{tikzpicture}
        \node (R1) at (-6/7*7,0){
        \begin{tikzpicture}[scale=0.6]
         \foreach \x in {0,...,2}{
             \draw (0,\x)--(2,\x);
             \draw (0,\x) edge[bend left] (2,\x);
             \draw (\x,0)--(\x,2);
             \draw (\x,0) edge[bend right] (\x,2);
             \foreach \y in {0,...,2}{
                \draw[fill] (\x,\y) circle (1.75pt);
             }
         }
          \draw[fill, blue] (0,0) node[above left]{$c_1$}  circle (3pt);
          \draw[fill, blue] (0,0) node[below left]{$c_2$}  circle (3pt);
          \draw[red,fill ] (1,0) node[below ]{$R$}  circle (3.75pt);
          \node at (1,-1.2){\tiny (A)};
         \end{tikzpicture}
        };

        \node (R2) at (-6/7*5,0){
        \begin{tikzpicture}[scale=0.6]
         \foreach \x in {0,...,2}{
             \draw (0,\x)--(2,\x);
             \draw (0,\x) edge[bend left] (2,\x);
             \draw (\x,0)--(\x,2);
             \draw (\x,0) edge[bend right] (\x,2);
             \foreach \y in {0,...,2}{
                \draw[fill] (\x,\y) circle (1.75pt);
             }
         }
          \draw[fill, blue] (0,0) node[below]{$c_1$}  circle (3pt);
          \draw[fill, blue] (1,0) node[below ]{$c_2$}  circle (3pt);
          \draw[red, fill] (2,0) node[below ]{$R$}  circle (3.75pt);
          \node at (1,-1.2){\tiny (B)};
         \end{tikzpicture}
        };
        
        \node (R3) at (-6/7*3,0){
        \begin{tikzpicture}[scale=0.6]
         \foreach \x in {0,...,2}{
             \draw (0,\x)--(2,\x);
             \draw (0,\x) edge[bend left] (2,\x);
             \draw (\x,0)--(\x,2);
             \draw (\x,0) edge[bend right] (\x,2);
             \foreach \y in {0,...,2}{
                \draw[fill] (\x,\y) circle (1.75pt);
             }
         }
          \draw[fill, blue] (0,0) node[below]{$c_1$}  circle (3pt);
          \draw[fill, blue] (1,0) node[below ]{$c_2$}  circle (3pt);
          \draw[red, fill] (1,1) node[below right]{$R$}  circle (3.75pt);
          \node at (1,-1.2){\tiny (C)};
         \end{tikzpicture}
        };

        \node (R4) at (-6/7*1,0){
        \begin{tikzpicture}[scale=0.6]
         \foreach \x in {0,...,2}{
             \draw (0,\x)--(2,\x);
             \draw (0,\x) edge[bend left] (2,\x);
             \draw (\x,0)--(\x,2);
             \draw (\x,0) edge[bend right] (\x,2);
             \foreach \y in {0,...,2}{
                \draw[fill] (\x,\y) circle (1.75pt);
             }
         }
          \draw[fill, blue] (0,0) node[below]{$c_1$}  circle (3pt);
          \draw[fill, blue] (1,0) node[below ]{$c_2$}  circle (3pt);
          \draw[red, fill] (2,1) node[below right]{$R$}  circle (3.75pt);
          \node at (1,-1.2){\tiny (D)};
         \end{tikzpicture}
        };

        \node (R5) at (6/7*1,0){
        \begin{tikzpicture}[scale=0.6]
         \foreach \x in {0,...,2}{
             \draw (0,\x)--(2,\x);
             \draw (0,\x) edge[bend left] (2,\x);
             \draw (\x,0)--(\x,2);
             \draw (\x,0) edge[bend right] (\x,2);
             \foreach \y in {0,...,2}{
                \draw[fill] (\x,\y) circle (1.75pt);
             }
         }
          \draw[fill, blue] (0,0) node[below ]{$c_1$}  circle (3pt);
          \draw[fill, blue] (1,1) node[below left]{$c_2$}  circle (3pt);
          \draw[red, fill] (2,1) node[ right]{$R$}  circle (3.75pt);
          \node at (1,-1.2){\tiny (E)};
         \end{tikzpicture}
        };

        \node (R6) at (6/7*3,0){
        \begin{tikzpicture}[scale=0.6]
         \foreach \x in {0,...,2}{
             \draw (0,\x)--(2,\x);
             \draw (0,\x) edge[bend left] (2,\x);
             \draw (\x,0)--(\x,2);
             \draw (\x,0) edge[bend right] (\x,2);
             \foreach \y in {0,...,2}{
                \draw[fill] (\x,\y) circle (1.75pt);
             }
         }
          \draw[fill, blue] (0,0) node[below ]{$c_1$}  circle (3pt);
          \draw[fill, blue] (1,1) node[below left]{$c_2$}  circle (3pt);
          \draw[red, fill] (1,0) node[ below]{$R$}  circle (3.75pt);
          \node at (1,-1.2){\tiny (F)};
         \end{tikzpicture}
        };

        \node (R7) at (6/7*5,0){\begin{tikzpicture}[scale=0.6]
         \foreach \x in {0,...,2}{
             \draw (0,\x)--(2,\x);
             \draw (0,\x) edge[bend left] (2,\x);
             \draw (\x,0)--(\x,2);
             \draw (\x,0) edge[bend right] (\x,2);
             \foreach \y in {0,...,2}{
                \draw[fill] (\x,\y) circle (1.75pt);
             }
         }
          \draw[fill, blue] (0,0) node[below ]{$c_1$}  circle (3pt);
          \draw[fill, blue] (1,1) node[below left]{$c_2$}  circle (3pt);
          \draw[red, fill] (2,2) node[ below right]{$R$}  circle (3.75pt);
          \node at (1,-1.2){\tiny (Y)};
         \end{tikzpicture}};

         \node (R8) at (6/7*1*7,0){\begin{tikzpicture}[scale=0.6]
         \foreach \x in {0,...,2}{
             \draw (0,\x)--(2,\x);
             \draw (0,\x) edge[bend left] (2,\x);
             \draw (\x,0)--(\x,2);
             \draw (\x,0) edge[bend right] (\x,2);
             \foreach \y in {0,...,2}{
                \draw[fill] (\x,\y) circle (1.75pt);
             }
         }
          \draw[fill, blue] (0,0) node[above left]{$c_1$}  circle (3pt);
          \draw[fill, blue] (0,0) node[below left]{$c_2$}  circle (3pt);
          \draw[red,fill ] (1,1) node[below right]{$R$}  circle (3.75pt);
          \node at (1,-1.2){\tiny (X)};
         \end{tikzpicture}};

        \node (C1) at (-6,4){
        \begin{tikzpicture}
         \foreach \x in {0,...,2}{
             \draw (0,\x)--(2,\x);
             \draw (0,\x) edge[bend left] (2,\x);
             \draw (\x,0)--(\x,2);
             \draw (\x,0) edge[bend right] (\x,2);
             \foreach \y in {0,...,2}{
                \draw[fill] (\x,\y) circle (1.75pt);
             }
         }
          \draw[fill, blue] (0,0) node[above left]{$c_1$}  circle (3pt);
          \draw[fill, blue] (0,0) node[below left]{$c_2$}  circle (3pt);
          \draw[red, ] (0,0) node[below right]{$R$}  circle (3.75pt);
          \node at (1,-0.5){\tiny (I)};
         \end{tikzpicture}
        };

        \node (C2) at (-3,4){
        \begin{tikzpicture}
         \foreach \x in {0,...,2}{
             \draw (0,\x)--(2,\x);
             \draw (0,\x) edge[bend left] (2,\x);
             \draw (\x,0)--(\x,2);
             \draw (\x,0) edge[bend right] (\x,2);
             \foreach \y in {0,...,2}{
                \draw[fill] (\x,\y) circle (1.75pt);
             }
         }
          \draw[fill, blue] (0,0) node[below]{$c_1$}  circle (3pt);
          \draw[fill, blue] (1,0) node[below ]{$c_2$}  circle (3pt);
          \draw[red, fill] (2,1) node[below right]{$R$}  circle (3.75pt);
          \node at (1,-0.5){\tiny (II)};
         \end{tikzpicture}
        };

        \node (C3) at (0,4){
        \begin{tikzpicture}
         \foreach \x in {0,...,2}{
             \draw (0,\x)--(2,\x);
             \draw (0,\x) edge[bend left] (2,\x);
             \draw (\x,0)--(\x,2);
             \draw (\x,0) edge[bend right] (\x,2);
             \foreach \y in {0,...,2}{
                \draw[fill] (\x,\y) circle (1.75pt);
             }
         }
          \draw[fill, blue] (0,0) node[below ]{$c_1$}  circle (3pt);
          \draw[fill, blue] (1,1) node[below left]{$c_2$}  circle (3pt);
          \draw[red, ] (1,1) node[below right]{$R$}  circle (3.75pt);
          \node at (1,-0.5){\tiny (III)};
         \end{tikzpicture}
        };

        \node (C4) at (3,4){\begin{tikzpicture}
         \foreach \x in {0,...,2}{
             \draw (0,\x)--(2,\x);
             \draw (0,\x) edge[bend left] (2,\x);
             \draw (\x,0)--(\x,2);
             \draw (\x,0) edge[bend right] (\x,2);
             \foreach \y in {0,...,2}{
                \draw[fill] (\x,\y) circle (1.75pt);
             }
         }
          \draw[fill, blue] (0,0) node[below ]{$c_1$}  circle (3pt);
          \draw[fill, blue] (1,1) node[below left]{$c_2$}  circle (3pt);
          \draw[red, fill] (2,2) node[ below right]{$R$}  circle (3.75pt);
          \node at (1,-0.5){\tiny (IV)};
         \end{tikzpicture}};

         \node (C5) at (6,4){\begin{tikzpicture}
         \foreach \x in {0,...,2}{
             \draw (0,\x)--(2,\x);
             \draw (0,\x) edge[bend left] (2,\x);
             \draw (\x,0)--(\x,2);
             \draw (\x,0) edge[bend right] (\x,2);
             \foreach \y in {0,...,2}{
                \draw[fill] (\x,\y) circle (1.75pt);
             }
         }
          \draw[fill, blue] (0,0) node[above left]{$c_1$}  circle (3pt);
          \draw[fill, blue] (0,0) node[below left]{$c_2$}  circle (3pt);
          \draw[red,fill ] (1,1) node[below right]{$R$}  circle (3.75pt);
          \node at (1,-0.5){\tiny (V)};
         \end{tikzpicture}};

        \draw[red,->] (R1) edge (C1);
        \draw[red,->] (R2) edge (C2);
        \draw[red,->] (R3) edge (C2);
        \draw[red,->] (R4) edge (C2);
        \draw[red,->] (R5) edge (C3);
        \draw[red,->] (R6) edge (C3);
        \draw[red,->] (R7) edge (C4);
        \draw[red,->] (R8) edge (C5);

    \end{tikzpicture}
    \]

\end{proof}

\begin{theorem}\label{T:K3boxgeq}  Let  $K_m, K_n$ be complete graphs with $m,n\geq 3$. 
    Then $c_{SA}(K_m\square K_n)  = 3$. 
\end{theorem}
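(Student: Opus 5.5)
The equality follows from Proposition~\ref{P:boxK3leq} together with a matching upper bound. The proposition already gives $c_{SA}(K_m\square K_n)\geq 3$, so it remains to show that three cops have a winning strategy on $K_m\square K_n$ for all $m,n\geq 3$. When $\min(m,n)=3$ this is immediate from Lemma~\ref{L:completemin}. So the real work is a uniform three-cop strategy for larger $m,n$.

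I would begin by recording the adjacency structure in coordinates. Vertices $(x,y)$ and $(a,b)$ are adjacent exactly when they agree in precisely one coordinate. So after a cop move the robber at $(a,b)$ is safe from a cop at $(x,y)$ only if he moves to a vertex agreeing with $(x,y)$ in both coordinates or in neither. The key observation is a ``rectangle trap.'' Suppose that after a cop turn the robber is at $(a,b)$ and the cops occupy $(a,y)$, $(x,b)$ and $(x,y)$ with $x\neq a$ and $y\neq b$. Then:
\begin{itemize}
\item every move $(a,b')$ with $b'\neq y$ is adjacent to the cop at $(a,y)$;
\item every move $(a',b)$ with $a'\neq x$ is adjacent to the cop at $(x,b)$;
\item the two remaining moves, $(a,y)$ and $(x,b)$ (sneaking onto a cop), are both adjacent to the cop at $(x,y)$.
\end{itemize}
Hence the robber is captured on the next cop turn. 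The goal is therefore to show that three cops can force this configuration.

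To force it, I would use the shadowing idea from Section~\ref{S:BP}, one coordinate at a time. First, a cop $c_1$ is brought to agree with the robber in exactly one coordinate. This is possible since $K_m\square K_n$ has diameter $2$ and contains triangles, so by Observation~\ref{O:notbipartite} the cops may be repositioned freely. Once $c_1=(a,y)$ is adjacent to the robber at $(a,b)$, the robber's only safe replies are to change his first coordinate or to sneak onto $c_1$. In either case $c_1$ can respond by matching the changed coordinate, restoring ``adjacent to the robber via a shared coordinate'' while always keeping $y\neq b$. A second cop $c_2$ then works to occupy the column of the robber, and the third cop $c_3$ heads for the fourth corner $(x,y)$, which only needs to be updated by one coordinate whenever the robber changes a coordinate.

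I expect the main obstacle to be the transitional phase, before all three cops are in place. The robber can alternate between changing rows and columns and sneaking onto cops, so the invariants for $c_1$ and $c_2$ may fail to be maintainable simultaneously. I would handle this with a small case analysis in the spirit of the configuration diagrams in Proposition~\ref{P:boxK3leq}, tracking the relative positions of the cops and the robber up to the symmetry of $K_m\square K_n$. The aim is to check that from every reachable configuration the cops have a move that either captures the robber or strictly progresses toward the rectangle trap. The extra rows and columns available when $m,n>3$ should only help the cops here, since each cop can always choose a fresh coordinate distinct from the robber's.
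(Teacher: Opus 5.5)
Your rectangle trap is correct, and it is exactly the configuration the paper's proof ends in. The gap is that you never show the cops can force it. For $\min(m,n)\ge 4$ that forcing argument is the whole content of the upper bound. You defer it to an unperformed ``small case analysis'' and even suspect the invariants for $c_1,c_2$ cannot be kept simultaneously, so as written three cops are not shown to win.

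Two supporting remarks are also off. Observation~\ref{O:notbipartite} only transfers an already-winning strategy to a new starting placement, so it does not justify positioning a cop relative to a moving robber (though that step is trivial here). And extra rows and columns do not ``only help the cops'': they let the robber start far from every cop, and they are what the robber exploits in Proposition~\ref{P:boxK3leq}.

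The paper closes the gap with a concrete opening. The cops start on $(1,1),(2,2),(3,3)$, so the robber must start on a cop or at some $(x,y)$ with $x,y\notin\{1,2,3\}$.
\begin{itemize}
\item If the robber is on $(3,3)$, the cops move to $(1,3),(1,2),(3,2)$, which is already your trap.
\item Otherwise the cops move to $(x,1),(2,y),(3,y)$. This leaves sneaking onto $(x,1)$ as the robber's only safe move, and then $(1,1),(1,y),(x,y)$ is the trap.
\end{itemize}

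Your own plan can also be finished directly, and for any initial placement. A cop not already adjacent to the robber at $(a,b)$ is either on the robber or shares no coordinate with him. In either case it can move to a vertex sharing exactly the robber's first coordinate, or exactly his second. So after the first cop move you may assume $c_1=(a,y)$ and $c_2=(x,b)$ with $x\ne a$, $y\ne b$. The robber's only safe moves are then onto $c_1$ or onto $c_2$.

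Say he moves onto $(a,y)$, and write $c_3=(p,q)$.
\begin{itemize}
\item If $c_3$ shares exactly one coordinate with $(a,y)$, it captures.
\item If $c_3=(a,y)$, play $c_3\to(x,y)$, $c_2\to(x,y')$, $c_1\to(a,y')$ with $y'\notin\{b,y\}$.
\item If $c_3$ shares no coordinate with $(a,y)$, play $c_2\to(x,y)$, $c_1\to(a,y')$, $c_3\to(x,y')$. Take $y'=q$ if $p\ne x$, and $y'\notin\{q,y\}$ if $p=x$; here $n\ge 3$ is used.
\end{itemize}
The case where he moves onto $c_2$ is symmetric and uses $m\ge 3$. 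Each case produces the trap, so your invariants are in fact jointly maintainable. But some such explicit argument must be supplied for the proof to be complete.
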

\begin{proof}
    By Proposition \ref{P:boxK3leq}, it suffices to show that three cops are sufficient.  Without loss of generality, suppose that $3\leq n\leq m$.  If $n=3$, then by Lemma \ref{L:completemin}, three cops are sufficient.

    Suppose that  $n>3$.  To show that three cops are sufficient, place the cops on $c_1=(1,1)$, $c_2=(2,2)$, $c_3=(3,3)$.  The robbers initial placement must either be on one of these cop positions or on $(x,y)$ where $x,y\not\in \{1,2,3\}$ to avoid capture in the next turn.   

     In the first case,  suppose without loss of generality the robber starts on $(3,3)$.  Then  move $c_1$ to $(1,3)$,  $c_2$ to $(1,2)$ and $c_3$ to $(3,2)$.    Then any move made by the robber results in capture on the next turn.  

     In the second case, if the robber starts on $(x,y)$ where $x,y\not\in \{1,2,3\}$, move  $c_1$ to $(x,1)$, $c_2$ to $(2,y)$ and $c_3$ to $(3,y)$   To avoid immediate capture, the robber must move to $(x,1)$.  Then move $c_1$ to $(1,1)$, $c_2$ to $(1,y)$ and $c_3$ to $(x,y)$.  Like in the first case, all valid robber moves lead to  capture in the following cop turn.

\end{proof}

\section{Future Directions} 

While we have some box product results from Section \ref{S:BP}, these bounds could potentially be further tightened.  In particular, when considering Theorem \ref{T:boxoddcycle}, observational evidence suggests that $c_{SA}(\square_{i=1}^k)C_{n_i}=k+1$, but this is not proven in this paper.  Moreover, while we have results for the box products of arbitrarily many trees or cycles, the results for complete graphs is restricted to the product of a pair of graphs and so the sneaky-active cop number for a general box product of complete graphs is still open.  

While this paper focuses on $\times$-homotopy, there are numerous other discrete homotopies defined on graphs, such as $A$-homotopy, equivalence via neighborhood complex \cite{Matsushita}, digital homtopy \cite{DigitalHomotopy} and more.  All of these homotopy theories result in their own notions of homotopy equivalence, and developing Cops and Robber variants which are invariant under these homotopies could shed light on the behavior of these different theories.

% BibTeX users please use one of
%\bibliographystyle{spbasic}      % basic style, author-year citations
\bibliographystyle{spmpsci}      % mathematics and physical sciences
\bibliography{ref}   % name your BibTeX data base

\end{document}